%% file: first_publication_compiled.tex
\documentclass[a4paper,11pt]{article}
\usepackage{MyStyle}

\usepackage{leftindex}

\usepackage{algorithm}
\floatname{algorithm}{Method}
\usepackage{algorithmic}

\newcolumntype{C}{>{\(}c<{\)}}
\usepackage{planarforest}
\input{planarforest_tikz.tex}

\newcommand{\EF}{EF}
\newcommand{\EFv}{\mathcal{EF}}

\allowdisplaybreaks%
\newtheorem{theorem}{Theorem}[section]
\newtheorem{definition}[theorem]{Definition}
\newtheorem*{definition*}{Definition}

\newtheorem{proposition}[theorem]{Proposition}
\newtheorem{lemma}[theorem]{Lemma}
\newtheorem{remark}[theorem]{Remark}
\newtheorem*{remark*}{Remark}

\newtheorem*{remarks*}{Remarks}
\newtheorem{corollary}[theorem]{Corollary}
\newtheorem{assumption}[theorem]{Assumption}
\newtheorem*{notation*}{Notation}
\newtheorem{ex}[theorem]{Example}
\newtheorem*{ex*}{Example}
\newtheorem{exs}[theorem]{Examples}
\newtheorem*{exs*}{Examples}

\newtheorem*{app*}{Application}
\newtheorem{conjecture*}{Conjecture}

\def\ts{\thinspace}

\usepackage{shuffle}
\def\shuffle{{\sqcup\mathchoice{\mkern-4.8mu}{\mkern-4.5mu}{\mkern-4.5mu}{\mkern-4.5mu}\sqcup}}

\title{Post-processed frozen-flow methods for the long time sampling of ergodic dynamics on Riemannian manifolds}
\author{ Adrien Busnot Laurent\textsuperscript{1} and Sébastien Macé\textsuperscript{1}}

\usepackage[english]{babel}
\usepackage{geometry}
\usepackage{csquotes}
\usepackage{hyperref}
\usepackage{mathtools}
\usepackage{mathrsfs}
\usepackage{tikz}
\usetikzlibrary{arrows.meta, calc}
\usepackage{xcolor,pagecolor}
\usepackage{booktabs,tabularx}
\usepackage[colorinlistoftodos]{todonotes}
\usepackage{pgfplots}
\pgfplotsset{compat=1.18}

\DeclareMathOperator{\IBP}{IBP}
\DeclareMathOperator{\Irred}{I_{R}}
\DeclareMathOperator{\Irredv}{\mathcal{I}_\mathcal{R}}
\DeclareMathOperator{\RED}{RED}
\newcommand{\deriv}{\mathrm{d}}
\newcommand{\ps}[2]{ \left\langle#1 \vert#2 \right\rangle}
\newcommand{\moy}[1]{ \left\langle#1 \right\rangle}
\newcommand{\crochet}[2]{ \left[ #1, #2 \right]}
\newcommand{\norm}[1]{\left\Vert#1 \right\Vert}
\newcommand{\suite}[2]{\left(#1\right)_{#2 \in\mathds{N}}}

\newcommand{\inte}[1]{ \displaystyle{ \int_{#1} } }
\newcommand{\espe}[1]{\mathds{E} \left[ #1 \right]}
\newcommand{\restr}[2]{\mathchoice{\setbox1\hbox{${\displaystyle #1}_{\scriptstyle #2}$}%
 \restrictionaux{#1}{#2}}
 {\setbox1\hbox{${\textstyle #1}_{\scriptstyle #2}$}%
 \restrictionaux{#1}{#2}}
 {\setbox1\hbox{${\scriptstyle #1}_{\scriptscriptstyle #2}$}%
 \restrictionaux{#1}{#2}}
 {\setbox1\hbox{${\scriptscriptstyle #1}_{\scriptscriptstyle #2}$}%
 \restrictionaux{#1}{#2}}}
\newcommand{\restrictionaux}[2]{{#1\,\smash{\vrule height .8\ht1 depth .85\dp1}}_{\,#2}}

\begin{document}
\footnotetext[1]{
 Univ Rennes, INRIA (Research team MINGuS), IRMAR (CNRS UMR 6625) and ENS Rennes, France.
 Adrien.Busnot-Laurent@inria.fr,
 Sebastien.Mace@univ-rennes.fr.}
\maketitle

\begin{abstract}
In this work, we propose a novel intrinsic approach to the approximation of ergodic SDEs on Riemannian manifolds, which include Riemannian Langevin dynamics.
In opposition to the standard extrinsic approaches such as penalization methods and projection methods, our methodology does not use embeddings or coordinates and only relies on natural geometric operations: geodesics, parallel transport,\ldots
We give a criterion for high order of accuracy for the invariant measure, develop new intrinsic numerical methods designed solely for sampling the invariant measure, and derive high order conditions using a new algebraic operation on exotic Lie-Butcher series.
In the spirit of the Leimkuhler-Matthews method, our approach prioritizes long time sampling efficiency over finite time accuracy, and outperforms the previous extrinsic and intrinsic approaches in terms of cost for a given accuracy, which we illustrate with several numerical experiments.

 \smallskip
 \noindent
 {\it Keywords:\,} geometric numerical integration, stochastic differential equations, Riemannian manifolds, Riemannian Langevin, ergodicity, Lie-group methods, frozen-flow, Butcher series, exotic series, post-Lie algebra, Hopf algebra, order conditions.
 \newline
 \smallskip
 \noindent
 {\it AMS subject classification (2020):\,} 16T05, 41A58, 60H35, 37M25, 65L06, 70H45.
\end{abstract}
\section{Introduction}\label{section:Introduction}

The aim of this paper is the design of high-order sampling methods of ergodic stochastic dynamics on Riemannian manifolds.
More precisely, let \((\mathcal{M},g)\) be a smooth, complete, connected  $D$-dimensional Riemannian manifold endowed with a metric \(g\) and let \(\nabla\) be its associated Levi-Civita connection. Let \(E_1,\dots,E_D\) be a global orthonormal frame basis (for simplicity), that is, for all \(x\) in \(\mathcal{M}\), the set \(\{E_1(x),\dots,E_D(x)\}\) is an orthonormal basis of the tangent space \(T_x \mathcal{M}\). 
Given a smooth and Lipschitz vector field \(F : x \mapsto \underset{d=1}{\overset{D}{\sum}} f^d(x) E_d(x)\), we consider stochastic differential equations on $\MM$ of the following form:
\begin{equation}\label{equation:SDE} \deriv X(t) = F(X(t))\deriv t + \sqrt{2}\sum_{d=1}^D E_d(X(t)) \circ \deriv W_d(t) , \quad X(0) = X_0 \in \mathcal{M}. \end{equation}
Equation~\eqref{equation:SDE} can be understood as a SDE with additive noise, as it rewrites (see~\cite{Hsu02sao}) as
\[
\deriv X(t) = (F+\nabla_{E_n}E_n)(X(t))\deriv t + \sqrt{2} \deriv B_\mathcal{M}(t),
\]
where \(B_\MM(t)\) is a Brownian motion on \(\MM\) and we use the Einstein summation notation.
The class of SDEs~\eqref{equation:SDE} includes the celebrated Riemannian Langevin dynamics
\begin{equation}\label{equation:Langevin} \deriv X(t) = -\nabla V(X(t))\deriv t + \sqrt{2}\deriv B_\mathcal{M}(t),
\end{equation}
when $F$ derives from a potential, that is, when for a smooth function $V\colon \MM\rightarrow\R$, one has
\begin{equation}\label{equation:correction}
 F = -\nabla V - \nabla_{E_n}E_n, \quad f^d = - E_d[V] - \ps{\nabla_{E_n}E_n}{E_d}.
\end{equation}

Under growth assumptions on $F$ (see, for instance, the Bakry-Emery criterion~\cite{Bakry86cne} for the Langevin case~\eqref{equation:Langevin}), the flow \(X(t)\) of~\eqref{equation:SDE} is ergodic, that is, its long-time behaviour is described by a deterministic measure \(\deriv \mu_\infty=\rho_\infty \deriv \vol_\MM\), that is absolutely continuous with respect to the Riemannian volume form \(\deriv \vol_\MM\), in the sense that for a large class of smooth functions \(\varphi\) and for all initial condition \(X_0 \in \MM\), one has
\[ \underset{T \to \infty}{\lim} \frac{1}{T} \int_0^T \varphi(X(t))\deriv t = \inte{\mathcal{M}} \varphi(y) \deriv \mu_\infty(y) \text{ almost surely.}\]
In the case of Riemannian Langevin dynamics~\eqref{equation:Langevin}, the invariant measure is given by the Gibbs density $\rho_\infty\propto e^{-V}$.

In this paper, we design new numerical methods that have a high order of accuracy for sampling the invariant measure of~\eqref{equation:SDE}.
We consider one-step numerical integrators \( {(X_n)}_{n = 0,\dots,N} \), on a discretization \(t_n = nh\) of a time interval \([0,T]\) for a time step \(h=T/N\), given by a random perturbation of the identity:
\begin{equation}\label{equation:method}
 X_{n+1} = \Psi_h(X_n), \quad X_0 \in \MM.
\end{equation}
We also consider post-processed methods
\begin{equation}\label{equation:post-process}
 X_0 \in \MM, \quad X_{n+1} = \Psi_h(X_n), \quad \overline{X}_N = \overline{\Psi}_h(X_N),
\end{equation}
where the post-processor \(X \mapsto \overline{X}\) is a random perturbation of the identity map on \(\MM\), that is applied only once at the very end of the trajectory.
Assuming ergodicity of the numerical method~\eqref{equation:method} for a measure \( \deriv \mu^h_\infty=\rho^h_\infty \deriv \vol_\MM\),
we propose methods of high order of accuracy \(p > 1\) for the invariant measure of~\eqref{equation:SDE}, that is, that satisfy for $h$ small enough,
\[
\abs{\inte{\mathcal{M}} \varphi \deriv \mu^h_\infty - \inte{\mathcal{M}} \varphi \deriv \mu_\infty} \leq C h^p.
\]
A crucial difficulty of the approach is that the integrators have to evolve on $\MM$ in order for the measure $\deriv \mu^h_\infty$ to be absolutely continuous w.r.t.\ts$\deriv \vol_\MM$.
Moreover, our approach focuses on \emph{intrinsic} integrators, that are, numerical methods that evolve on $\MM$ and that do not depend on an embedding of $\MM$ in a higher-dimensional Euclidean space or on a choice of coordinates.
Among the existing \emph{extrinsic} approaches, we mention the popular projection methods (see~\cite{Lelievre10fec, Laurent21ocf} and references therein), which are conveniently implemented by embedding $\MM$ in a vector space of higher dimension, but often face severe timestep restrictions.
We also mention the existing intrinsic approaches for SDEs on manifolds~\cite{Malham08slg, Muniz22hso, Bharath23sae, Luesink26stf}, that either focus on order one of accuracy or strong convergence, or are defined on specific manifolds only (i.e., Lie groups).
Our approach relies on the class of frozen-flow methods recently introduced in~\cite{Bronasco25hoi}, on which we add post-processors~\cite{Vilmart15pif}. This class of methods can be seen as a stochastic extension of the Crouch-Grossman and commutator-free Lie group methods~\cite{Crouch93nio, Owren99rkm, Iserles00lgm, Celledoni03cfl, Owren06ocf}.
In opposition to~\cite{Malham08slg, Muniz22hso}, our approach generalises the Lie group methods to any smooth manifold and focuses only on the approximation for the invariant measure.
An important feature of the new integrators is that they do not rely on the use of embeddings or on local coordinates. Their formulation, convergence analysis, order theory, and implementation are entirely intrinsic.

To derive methods of high order for the invariant measure, it is sufficient to use methods of weak order $p$ as they will have at least order $p$ for the invariant measure~\cite{Talay90eot}. However, there exists many efficient methods of low weak order and high order for the invariant measure in a variety of (Euclidean) contexts~\cite{BouRabee10lra, Leimkuhler13rco, Abdulle15lta, Leimkuhler16tco, Bronasco25els}.
Among these, the celebrated Leimkuhler-Matthews method~\cite{Leimkuhler13rco} is of weak order one and second order for the invariant measure for solving~\eqref{equation:SDE} when $\MM=\R^D$:
\begin{align*}
H_n = X_n+ \frac{\sqrt{2h}}{2} \xi^d_n E_d,\quad
X_{n+1} = X_n+ h F(H_n) + \frac{\sqrt{2h}}{2}\xi^d_n E_d,\quad
\overline{X_N} = X_N +\frac{\sqrt{2h}}{2} \overline{\xi}^d_n E_d.
\end{align*}
Our intrinsic approach with frozen-flow methods allows us to derive in particular a straightforward generalisation of the Leimkuhler-Matthews method to the Riemannian setting,

where only one evaluation of the vector field $F$ is needed (see Section~\ref{sub:High order frozen methods for sampling ergodic dynamics} for the details).
We recall that, in comparison, the extrinsic approach with projection methods from~\cite{Laurent21ocf} requires four evaluations of $F$, has a technical implementation, and faces stability issues.

The derivation of the order conditions for the invariant measure relies on exotic Lie-Butcher series and on the Weitzenböck connection algebra.
The exotic Lie-Butcher series were introduced in~\cite{Bronasco25hoi} for deriving weak high-order estimates. They generalise naturally the exotic Butcher series~\cite{Laurent20eab, Laurent21ata, Bronasco22ebs, Bronasco22cef} and the Lie-Butcher series~\cite{Iserles00lgm, Owren06ocf, MuntheKaas08oth}.
While such formalism is necessary for dealing with the intricate calculations of order conditions, we emphasize that the algebraic objects considered here are interesting beyond their numerical use.
We mention in particular the works~\cite{Bronasco22cef, Laurent23tue, Bronasco25hoi} that study the geometric and algebraic properties of exotic forests and the works~\cite{MuntheKaas13opl, Ebrahimi15otl, AlKaabi22aao, Grong23pla, MuntheKaas23lat} that draw strong links between planar trees and computations in specific connection algebras.
The numerical integrators considered here rely on elementary geometric operations (geodesic, parallel transport) for a specific curvature-free connection, called the Weitzenböck connection. This choice allows us to use planar trees for the calculations, to which we add the exotic feature for representing the Laplace-Beltrami operator in terms of forests.
Thanks to this algebraic formalisation, we provide a methodology for the design of intrinsic integrators of any high-order for the invariant measure.
More precisely, we define the integration by parts of planar exotic forests, thus generalising~\cite{Laurent20eab, Bronasco22ebs, Bronasco22cef} to the manifold setting, and we describe the order conditions for the invariant measure by a character on a modified shuffle algebra of exotic forests.
Contrary to the Euclidean case where such theory naturally leads to modified equations of any order and stochastic backward error analysis for the invariant measure, there are profound algebraic differences on manifolds that make intrinsic stochastic backward error analysis challenging.

The paper is organised the following way.
We present in Section~\ref{section:Preliminaries} the main results of the paper. After presenting the notation and main assumptions, we give a convenient characterisation of the high-order for the invariant measure, generalising the Euclidean works~\cite{Debussche12wbe, Abdulle14hon, Vilmart15pif}. Then, we apply this criterion to derive new simple methods of second order for the invariant measure.
The proof of the criterion for high order for the invariant measure is presented in Section~\ref{section:High order analysis for the invariant measure}.
In Section~\ref{Section:Intrinsic order conditions with planar exotic forests}, we apply the exotic Lie-Butcher series formalism for the derivation of order conditions for the invariant measure, and we extend the integration by parts on exotic forests to the planar context.
We present numerical experiments on a variety of classical manifolds in Section~\ref{section:Numerical experiments} and we present outlooks and future works in Section~\ref{section:conclusion}.

\section{Preliminaries, characterization of the invariant measure and new numerical methods}\label{section:Preliminaries}

\subsection{Notation and main assumptions}\label{sub:Notations and main assumptions}

For \(v \in T\MM\), let \(\abs{v}=\sqrt{g(v,v)}\) be its Riemannian norm. For
\(o \in \MM\). Then we denote by \(r = d(o,\cdot)\) the Riemannian distance
map. It is 1-Lipschitz and \(r^2\) is smooth on \(\MM \setminus Cut_o\).
For all \(\varphi \in C^\infty\), \(\deriv \varphi\) is its differential and
\(\abs{\deriv \varphi}\) its Riemannian norm. We denote by \(v[\varphi](x)\) the differential \(\deriv \varphi(x) \cdot v\) of
\(\varphi\) in the direction of \(v \in T_x \MM\) at the point \(x \in \MM\). Let
denote the set of test function by \(C_P^\infty(\MM)\), the set of smooth functions
whose derivatives of all orders have polynomial growth and satisfy estimates of
the form
\[ \abs{E_{d_q}[\dots E_{d_1}[\varphi]\dots]}(x) \leq C(1+{r(x)}^K) , \quad \text{ for } q=0,1,\dots \]

Let \(F : x \mapsto \underset{d=1}{\overset{D}{\sum}} f^d(x) E_d(x)\) be a
vector field and its unique decomposition in the orthonormal frame, then \(F\)
is an element of \( \mathfrak{X}_P(\mathcal{M}) \) if \(f^d \in C_P^\infty(\MM)
\) and if its components are Lipschitz, that is, \(\abs{\deriv f^d} \leq C \).
Equipped with the Jacobi bracket ${[-,-]}_J$, the space $(\mathfrak{X}_P(\mathcal{M}),{[-,-]}_J)$ is a Lie algebra.

\begin{definition}
 The Weitzenböck affine connection \(\vartriangleright : \mathfrak{X}_P(\mathcal{M}) \times \mathfrak{X}_P(\mathcal{M}) \to \mathfrak{X}_P(\mathcal{M})\), is given by
 \[(X \vartriangleright Y) = (X  [Y^d]) E_d. \]
 We extend this notation to functions by $X \vartriangleright \varphi=X[\varphi]$.
\end{definition}

\begin{remark}
 The choice of the Weitzenböck connection was already implicitly made in~\cite{Owren06ocf}. It can be shown that its associated curvature vanishes. If one further assumes that the torsion $T$ is constant, then $(\mathfrak{X}_P(\mathcal{M}),\vartriangleright, -T)$ is a post-Lie algebra~\cite{Grong23pla}.
\end{remark}

In the spirit of~\cite{Lundervold11hao, Grong23pla, Bronasco25hoi}, let the differential operators on $\MM$ be given by the tensor algebra $T(\mathfrak{X}_P(\mathcal{M}))$ whose product is denoted by $\cdot$.
Differential operators act on functions \(\varphi\) by:
\[ (X_1 \cdots X_n)\vartriangleright \varphi = x^{i_1}_1 \cdots x^{i_n}_n E_{i_1} [ \ldots E_{i_n}[\varphi] \ldots ],\]
where $X_j=\sum_{i_j=1}^D x^{i_j}_j E_{i_j}$ is the unique decomposition of the vector field $X_j$ in the frame basis.
\begin{remark}
 The Laplace-Beltrami operator is expressed by
 \[\Delta \varphi = \underset{d=1}{\overset{D}{\sum}} \left( E_d\cdot E_d - \nabla_{E_d}E_d \right)\vartriangleright\varphi .\]
\end{remark}
The product $\vartriangleright$ extends to $T(\mathfrak{X}_P(\mathcal{M}))$ by the so-called Guin-Oudom process~\cite{Oudom08otl, Ebrahimi14tme}, which extends straightforwardly in a curvature-free setting (see~\cite{Bronasco25hoi}).
The space of differential operators $(T(\mathfrak{X}_P(\mathcal{M})),\cdot, \Delta_\shuffle)$ is a Hopf algebra when equipped with the deshuffle coproduct~\cite{Lundervold11hao, Grong23pla}. In the context of a connection with constant torsion, $(T(\mathfrak{X}_P(\mathcal{M})),\cdot, \Delta_\shuffle, \vartriangleright)$ yields a post-Hopf algebra~\cite{Li23pha}. Note that taking into account the action of differential operators on functions would naturally yield post-Hopf algebroid structures~\cite{Bronasco25hoi, Busnot25pha}.

The generator \(\mathcal{L}\) of the SDE~\eqref{equation:SDE} is defined by
\begin{equation}\label{equation:generator}
 \mathcal{L} \varphi = \underset{d=1}{\overset{D}{\sum}} (f^d E_d + (E_d\cdot E_d)) \vartriangleright\varphi.
\end{equation}
In the case of the Langevin equation~\eqref{equation:Langevin}, the generator is expressed as
\begin{equation}\label{equation:generator Langevin}
 \mathcal{L} \varphi = - \nabla V [\varphi] + \Delta \varphi.
\end{equation}
To ensure that the equation~\eqref{equation:SDE} is well-posed and that the
solution does not blow up, we shall assume the following.
\begin{assumption}\label{assumption:2.1}
 The vector fields \(E_1,\dots,E_D\) are smooth and bounded.
 The vector field \(F\) belongs to \(\mathfrak{X}_P(\mathcal{M})\).
 The generator satisfies
 \begin{equation}\label{equation:elliptic}
 \mathcal{L}r^2 \leq \nu + \lambda r^2
 \end{equation}
 on $\mathcal{M}\setminus \mathrm{Cut}_o$ for some constants $\lambda \in\mathbb{R}$ and $\nu \geq 1$.
\end{assumption}

There exist various criteria which ensure that
inequality~\eqref{equation:elliptic} is satisfied. The first criterion is
compactness, which can be used for classical manifolds such as the sphere
\(\mathds{S}^n\) and the Lie group \(SO_q(\mathds{R})\). This condition being
seldom satisfied, a second classical criterion is the Bakry-Émery criterion
from~\cite{Bakry86cne}. This criterion applies to the case of
equation~\eqref{equation:SDE}, where the generator writes as~\eqref{equation:generator Langevin}. One says that the potential \(V\) satisfies
the criterion if there exists \(\kappa \in \mathds{R}\) such that
\[\Ric + \Hess (V) \geq \kappa.\]
The term \(\Ric\) designs the Ricci tensor on \((\mathcal{M},g)\) and is
defined as the trace of the Riemann tensor. Note that this handy criterion has
been adapted in~\cite{Antonyuk07nonexp} for equation~\eqref{equation:SDE} with
a lower bound of the operator \(\Ric-\nabla F\). The moment conditions
from~\cite{Li94stochdiff} implies stochastic completeness.

Under Assumption~\ref{assumption:2.1}, the equation~\eqref{equation:SDE}
generates a Markovian semigroup on \(C_P^\infty(\MM)\). For all \(\varphi \in
C_P^\infty(\MM) \), the function \( u : (t,x) \mapsto \espe{\varphi(X(t))\vert
 X(0) = x} \) satisfies the Kolmogorov equation
\begin{equation}\label{equation:Kolmogorov}
 \partial_t u = \mathcal{L}(u), \quad u(0,x) = \varphi(x).\end{equation}
To the contrary to~\cite{Bronasco25hoi}, we are interested in the
behaviour in long time of the solution of~\eqref{equation:SDE}.
\begin{definition}
 A process \(X\) is ergodic if there exists \(\deriv \mu_\infty\) a unique invariant measure with density function \(\rho_\infty\) with respect to \(\deriv \vol_\MM\), such that for all \(\varphi \in C_P^\infty(\MM)\) and for all initial condition \(X(0) \in \MM\), it follows that
 \begin{equation}
 \underset{T \to \infty}{\lim} \frac{1}{T} \underset{0}{\overset{T}{\int}} \varphi(X(s))\deriv s = \inte{\mathcal{M}} \varphi(y) \deriv \mu_\infty(y) \text{ almost surely,}
 \end{equation}
 where \(\deriv \vol_\mathcal{M}\) is the Riemannian measure on \(\MM\) defined in~\cite{Lee19irm}.
\end{definition}
\begin{definition}
 A numerical method~\eqref{equation:method} is ergodic if there exists \(\deriv \mu^h\) a unique invariant measure with density function \(\rho^h\) with respect to \(\deriv \vol_\MM\), such that for all \(\varphi \in C_P^\infty(\MM)\) and for all initial condition \(X(0) \in \MM\), it follows that
 \begin{equation}
 \underset{N \to \infty}{\lim} \frac{1}{N+1} \underset{n=0}{\overset{N}{\sum}} \varphi(X_n) = \inte{\mathcal{M}} \varphi(y) \rho^h(y) \deriv \vol_\mathcal{M}(y) \text{ almost surely.}
 \end{equation}
 The method is of order \(p \geq 1\) for the invariant measure if there exists \(h_0 > 0\) and \(C >0\) which depends on \(h_0\) and \(\varphi\) such that for all \(h \in (0,h_0)\),
 \begin{equation}\label{equation:error}
 \abs{e(\varphi,h)} \leq C h^p \text{ with } e(\varphi,h) = \underset{N \to \infty}{\lim} \frac{1}{N+1} \underset{n=0}{\overset{N}{\sum}} \varphi(X_n) - \inte{\mathcal{M}} \varphi(y) \deriv \mu_\infty(y).
 \end{equation}
\end{definition}
In the spirit of~\cite{Abdulle14hon}, the ergodicity of the stochastic process is a prerequisite of our analysis.
\begin{assumption}\label{assumption:H1}
 Under Assumption~\ref{assumption:2.1}, there exists a unique invariant measure \(\deriv \mu_\infty\) with density \(\rho_\infty\) with respect to \(\deriv \vol_\MM\) is the unique solution of the equation
 \begin{equation}
 \mathcal{L}^*\deriv \mu_\infty = 0,
 \end{equation}
 with \(\mathcal{L}^*\) the adjoint of \(\mathcal{L}\) in \(L^2(\mathcal{M})\).
\end{assumption}
In the case of the Riemannian Langevin equation~\eqref{equation:Langevin},
Assumption~\ref{assumption:H1} is automatically satisfied and the density
\(\rho_\infty\) is explicitly given by the Gibbs measure:
\begin{equation}\label{equation:Gibbs}
 \rho_\infty = e^{-V} / Z, \quad Z = \inte{\MM} e^{-V} \deriv \vol_\MM.
\end{equation}
\begin{assumption}\label{assumption:H2}
 For all function \(g \in C_P^\infty(\MM)\) with zero mean on \(\MM\), there exists a unique function \(\mu \in C_P^\infty(\MM)\) such that \( \mathcal{L}^* \mu = g\) and \( \inte{\mathcal{M}} \mu(y) \deriv \mu_\infty(y) = 0 \).
\end{assumption}
The following assumption from~\cite{Bronasco25hoi}, automatically satisfied on
\(\mathds{R}^d\), ensures the regularity of the semigroup~\eqref{equation:Kolmogorov}.
\begin{assumption}\label{assumption:2.3}
 For all function \(\varphi \in C_p^\infty(\mathcal{M})\), the map
 \[u : (t,x) \mapsto \espe{\varphi(X(t))\vert X_0 =x}\]
 belongs to \(C^\infty((0,T),C_P^\infty(\mathcal{M}))\), that is, for all \(k \geq 0\) there exist constants \(C > 0\) and \(\kappa \geq 0\) such that for all \(x \in \MM\)
 \[ \underset{t \in (0,T)}{\sup} \abs{\partial^k_t u(t,x)} \leq C(1+r^\kappa_w(x))\abs{\varphi}_{C^{2N+2}}. \]
\end{assumption}
Hence, taking the limit as \(t \to \infty\) of the time integration of Equation~\eqref{equation:Kolmogorov} leads to, under
Assumption~\ref{assumption:2.3},
\begin{equation}\label{equation:lim_u}
 \underset{t \to \infty}{\lim} u(t,x) = \varphi(x) + \int_0^\infty \mathcal{L} u(s,x) \deriv s.
\end{equation}
The following assumption is an ergodicity condition concerning convergence to
the invariant measure. We introduce the following norm and seminorm:
\[ \norm{\psi}_{C^k} = \underset{\underset{0 \leq \abs{j}\leq k}{j=(j_1,\dots,j_D)}}{\sup} \underset{x \in \mathcal{M}}{\sup} \abs{\partial_j \psi(x)} \text{ and } \abs{\psi}_{C^k} = \underset{\underset{1 \leq \abs{j}\leq k}{j=(j_1,\dots,j_D)}}{\sup} \underset{x \in \mathcal{M}}{\sup} \abs{\partial_j \psi(x)}. \]
\begin{assumption}\label{assumption:H3}
 Equation~\eqref{equation:SDE} admits a spectral gap \(\lambda >0\): for every integer \(k \geq 0\) there exists a polynomial \(P_k\) such that for all \(t \geq 0\) and all \(\varphi \in C^\infty_P(\MM)\):
 \begin{equation}
 \norm{u(t,\cdot) - \inte{\mathcal{M}} \varphi(y) \rho_\infty(y) \deriv \vol_\mathcal{M}(y) }_{C^k} \leq P_k(t) e^{-\lambda t} \norm{\varphi}_{C^k}.
 \end{equation}
\end{assumption}
The Taylor expansion of the semigroup~\eqref{equation:Kolmogorov}satisfies the following.
\begin{proposition}
 Under Assumption~\ref{assumption:2.3}, for all function \(\varphi \in C^\infty_P(\MM)\) and \(h\) small enough, the following expansion holds
 \[ u(h,x) = \varphi(x) + \underset{j=1}{\overset{p}{\sum}} \frac{h^j}{j!} \mathcal{L}^j \varphi(x) + h^{p+1} R^h_p(\varphi,x), \]
 where the remainder satisfies \(\abs{R^h_p(\varphi,x)} \leq C(1+r^\kappa_w(x))\).
\end{proposition}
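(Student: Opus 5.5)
Since Assumption~\ref{assumption:2.3} gives \(u\in C^\infty((0,T),C_P^\infty(\MM))\), the natural route is a Taylor expansion in time with integral remainder. I would write, for fixed \(x\),
\[
u(h,x)=\sum_{j=0}^{p}\frac{h^j}{j!}\,\partial_t^j u(0,x)+\frac{h^{p+1}}{p!}\int_0^1(1-s)^p\,\partial_t^{p+1}u(sh,x)\,\deriv s ,
\]
identify the coefficients \(\partial_t^j u(0,x)\) with \(\mathcal{L}^j\varphi(x)\), and bound the remainder using the polynomial growth estimate of Assumption~\ref{assumption:2.3}. The remainder \(R^h_p(\varphi,x)\) is then the integral term divided by \(h^{p+1}\).

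The first step is to prove \(\partial_t^j u=\mathcal{L}^j u\) for \(t\in(0,T)\). This follows by induction from the Kolmogorov equation~\eqref{equation:Kolmogorov}. For the induction, \(\mathcal{L}\) commutes with \(\partial_t\) because \(\mathcal{L}\) is a differential operator in \(x\) whose coefficients \(f^d\) and \(E_d\) do not depend on \(t\). The mixed derivatives exist and are continuous because \(u\in C^\infty((0,T),C_P^\infty(\MM))\), and \(\mathcal{L}\) maps \(C_P^\infty(\MM)\) to itself since \(F\in\mathfrak{X}_P(\MM)\) and the \(E_d\) are smooth and bounded.

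The second step is to pass to \(t\to0^+\), obtaining \(\partial_t^ju(0^+,x)=\mathcal{L}^j\varphi(x)\). This is the main obstacle, because Assumption~\ref{assumption:2.3} is stated on the open interval \((0,T)\). I would avoid pointwise limits of \(\partial_t^j u\) and instead iterate the integral identity \(u(t,x)=\varphi(x)+\int_0^t\mathcal{L}u(s,x)\,\deriv s\), which is valid since \(u\) is continuous at \(t=0\). Substituting the identity into itself \(p\) times gives
\[
u(h,x)=\sum_{j=0}^{p}\frac{h^j}{j!}\mathcal{L}^j\varphi(x)+\int_0^h\!\!\int_0^{s_1}\!\!\cdots\!\int_0^{s_p}\mathcal{L}^{p+1}u(s_{p+1},x)\,\deriv s_{p+1}\cdots\deriv s_1 .
\]
Each iteration uses \(\mathcal{L}u(s,\cdot)=\mathcal{L}\varphi+\int_0^s\mathcal{L}^2u\). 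This in turn uses \(\mathcal{L}u=u_{\mathcal{L}\varphi}\), the semigroup applied to \(\mathcal{L}\varphi\), which is a standard commutation of a Markov semigroup with its generator on \(C_P^\infty(\MM)\).

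With this form, the remainder is directly an iterated integral of \(\mathcal{L}^{p+1}u=\partial_t^{p+1}u\). Its \(p+1\) nested time integrals over a simplex have volume \(h^{p+1}/(p+1)!\). By Assumption~\ref{assumption:2.3} with \(k=p+1\),
\[
\sup_{s\in(0,T)}\abs{\partial_t^{p+1}u(s,x)}\le C(1+r_w^\kappa(x)),
\]
where \(C\) is uniform in \(s\) and absorbs the \(\varphi\)-dependent seminorm. Hence \(\abs{R^h_p(\varphi,x)}\le C(1+r_w^\kappa(x))/(p+1)!\) for all \(h<T\). The smallness condition on \(h\) is needed only so that \(h\in(0,T)\).
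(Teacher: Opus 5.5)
Your proposal is correct and follows essentially the same route as the paper: a Taylor expansion of $t\mapsto u(t,x)$ with integral remainder, the coefficients identified as $\mathcal{L}^j\varphi(x)$ through the Kolmogorov equation, and the remainder bounded uniformly in time by Assumption~\ref{assumption:2.3} with $k=p+1$. Your nested simplex integral is just the iterated form of the remainder $\int_0^h \frac{(h-s)^p}{p!}\partial_t^{p+1}u(s,x)\,\deriv s$ used there; the differences are that you explicitly justify $\partial_t^j u(0^+,x)=\mathcal{L}^j\varphi(x)$ via $\mathcal{L}P_t\varphi=P_t\mathcal{L}\varphi$, a step the paper takes for granted, and that you obtain the slightly sharper constant $C/(p+1)!$ instead of $C/p!$.
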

\begin{proof}
 A Taylor expansion yields:
 \[ u(h,x) = \varphi(x) + \underset{j=1}{\overset{p}{\sum}} \frac{h^j}{j!} \mathcal{L}^j \varphi(x) + \int_0^t \frac{{(h-t)}^p}{p!} \partial_t^{p+1} u(t,x) \deriv t. \]
 Hence, the remainder satisfies
 \begin{align*}
 \abs{R_p^h(\varphi,x)} = \abs{ \frac{h^{-1}}{p!} \int_0^t {(\frac{{(h-t)}}{h})}^p \partial_t^{p+1} u(t,x) \deriv t } \leq \frac{h^{-1}}{p!} \int_0^t C (1+{r(x)}^\kappa) \deriv t \leq \frac{C}{p!} (1+{r(x)}^\kappa),
 \end{align*}
 where we use Assumption~\ref{assumption:2.3}.
\end{proof}
Introduce the Taylor-Talay-Tubaro expansion~\cite{Talay90eot} of the numerical scheme~\eqref{equation:method} and an assumption on its moments.
\begin{assumption}\label{assumption:2.6}\label{assumption:3.2}
 For all function $\varphi \in C^\infty_P(\mathcal{M})$ and $h$ small enough, the expansion holds
 \[\espe{\varphi(X_1) | X_0 = x} = \varphi(x) + \underset{j=1}{\overset{p}{\sum}} h^j A_j \varphi(x) + h^{p+1} R^h_p(\varphi,x),\]
 where \(A_j\) is a linear differential operator of order \(2j\) and
 \(R^h_p(\varphi,x)\) is a remainder satisfying
 \[ \abs{R^h_p(\varphi,x)} \leq C(1+r^\kappa_w(x))\abs{\varphi}_{C^{2p+2}}.\]
 In the case of a post-processed method~\eqref{equation:post-process}, the expansion is of the same form,
 \[ \espe{\varphi(\overline{X}) | X} = \varphi(X) + \underset{j=1}{\overset{p}{\sum}} h^j \overline{A_j} \varphi(X) + h^{p+1} \overline{R}^h_p(\varphi,X),\]
 where \(\overline{R}^h_p(\varphi,x)\) satisfies \[ \abs{\overline{R}^h_p(\varphi,x)} \leq C(1+r^\kappa_w(x))\abs{\varphi}_{C^{2p+2}}.\]
\end{assumption}
\begin{remark}
 For most of the existing numerical methods that have a Taylor-Talay-Tubaro expansion, the differential operators \(A_j\) have a specific form. They typically write with the vector fields $F$ and $(E_d)$ and their iterated covariant derivatives, that is, they write with the coordinates of the jet space over $F$ and the frame $(E_d)$.
 In the specific case of stochastic Runge-Kutta methods, the \(A_j\) are equivariant with respect to orthogonal change of variables, which results in the \(A_j\) being naturally described by tree structures (see~\cite{Laurent20eab, Laurent23tue}). The extension of such universal equivariance property to the frozen-flow methods used here is matter for future work.
\end{remark}
\begin{definition}
 Under Assumption~\ref{assumption:2.6}, a method is consistent if \(A_1 = \mathcal{L}\). In the following, the consistency of the method is always considered true.
\end{definition}
\begin{assumption}\label{assumption:2.7}
 The method~\eqref{equation:method} has finite moments, that is, for all \(\kappa >0\) and all \(T \in \mathds{R}_+\) there exists a constant \(C(T) > 0\) such that:
 \[ \underset{n=0 \dots N}{\sup} {\espe{{r(X_n)}^\kappa | X_0 = x} } \leq C(T). \]
\end{assumption}
\subsection{High-order characterization of the invariant measure}\label{sub:Characterization of high order for the invariant measure}
Let us now state one of the central results of this work. We introduce new sufficient conditions to approximate the invariant measure, adapted from~\cite{Abdulle14hon}. This
result is key for designing the high-order numerical schemes~\eqref{equation:method} presented in Subsection~\ref{sub:High order frozen methods for sampling ergodic dynamics}. Proofs of Theorems~\ref{theoreme:characterization} and~\ref{theorem:theorem_post}, adapted from~\cite{Debussche12wbe,Vilmart15pif}, are posponed to Section~\ref{section:High order analysis for the invariant measure}.
\begin{theorem}\label{theoreme:characterization}
 Consider the SDE~\eqref{equation:SDE} on \( \mathcal{M} \) satisfying Assumptions~\ref{assumption:2.1},~\ref{assumption:H2} and~\ref{assumption:H3} and solved by an ergodic numerical method~\eqref{equation:method} satisfying Assumptions~\ref{assumption:H1} and~\ref{assumption:3.2} and
 \[ A_j^* \deriv \mu_\infty = 0 \text{ for } j=2 \dots p.\]
 Then the method~\eqref{equation:method} is of order $p$ for the invariant measure for equation~\eqref{equation:Langevin}. More precisely, the
 error of the invariant measure $e(\varphi,h)$ satisfies for all $\varphi \in
 C_P^\infty$ and all $h \in [0,h_0]$ for a small \(h_0\):
 \[ e(\varphi,h) = h^p \int_{0}^{\infty} \inte{\mathcal{M}} u(t,x) A_{p+1}^* \deriv \mu_\infty(x) \deriv t + \mathcal{O}(h^{p+1}). \]
\end{theorem}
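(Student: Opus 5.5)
I follow the standard approach of Debussche--Faou and Abdulle--Vilmart--Zygalakis: compare the numerical invariant measure with the exact one through the solution of the Kolmogorov equation.

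Fix $\varphi\in C_P^\infty(\MM)$. Define $u(t,x)=\espe{\varphi(X(t))\mid X(0)=x}$ and $\overline{\varphi}=\int_\MM\varphi\,\deriv\mu_\infty$. By ergodicity of the method, $e(\varphi,h)=\int_\MM(\varphi-\overline{\varphi})\,\deriv\mu^h$. The ergodicity assumption on the method together with Assumption~\ref{assumption:2.7} gives that $\mu^h$ has moments of all orders uniformly in $h$, so polynomially growing remainders are integrable against $\mu^h$.

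Step 1 is to write $\varphi-\overline{\varphi}$ as a time integral. Integrating the Kolmogorov equation~\eqref{equation:Kolmogorov} in time and using the spectral gap (Assumption~\ref{assumption:H3}), we have $u(t,\cdot)\to\overline{\varphi}$ in every $C^k$ with exponential rate. This turns~\eqref{equation:lim_u} into
\[
\varphi-\overline{\varphi}=-\int_0^\infty\mathcal{L}u(t,\cdot)\,\deriv t .
\]
Step 2 uses invariance of $\mu^h$. For every test function $\psi$ we have $\int_\MM(\espe{\psi(X_1)\mid X_0=x}-\psi(x))\,\deriv\mu^h(x)=0$. Insert $\psi=u(t,\cdot)$ into the Taylor--Talay--Tubaro expansion of Assumption~\ref{assumption:3.2} and use consistency $A_1=\mathcal{L}$. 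This gives
\[
\int_\MM\mathcal{L}u(t,\cdot)\,\deriv\mu^h=-\sum_{j=2}^{p+1}h^{j-1}\int_\MM A_ju(t,\cdot)\,\deriv\mu^h+\mathcal{O}\big(h^{p+1}\big).
\]
The remainder is controlled by $\abs{u(t,\cdot)}_{C^{2p+4}}(1+r^\kappa)$. Only the seminorm enters, which is crucial: by Assumption~\ref{assumption:H3} (applied to $u-\overline{\varphi}$) it decays like $P_k(t)e^{-\lambda t}$, so everything can be integrated in $t\in(0,\infty)$. Integrating in $t$ and using Step 1 yields the key identity
\[
e(\varphi,h)=\sum_{j=2}^{p+1}h^{j-1}\int_0^\infty\!\!\int_\MM A_ju(t,\cdot)\,\deriv\mu^h\,\deriv t+\mathcal{O}\big(h^{p+1}\big).
\]

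Step 3 is the bootstrap. Write $\deriv\mu^h=\deriv\mu_\infty+(\deriv\mu^h-\deriv\mu_\infty)$ in each term. The $\mu_\infty$ part is $\int_\MM u\,A_j^*\deriv\mu_\infty$, which vanishes for $j=2,\dots,p$ by hypothesis. What remains is $h^{j-1}$ times $e(\psi_j,h)$, where $\psi_j=\int_0^\infty A_ju(t,\cdot)\,\deriv t\in C_P^\infty$, well defined by the spectral gap. First apply the identity with $\varphi$ replaced by any test function to get the crude bound $e(\psi,h)=\mathcal{O}(h)$. Feeding this back gives $e(\varphi,h)=\mathcal{O}(h^2)$ when $p\ge2$, and an induction on the order gives $e(\varphi,h)=\mathcal{O}(h^p)$ uniformly over the finitely many test functions involved. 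In the final expansion, the terms $j\le p$ contribute $h^{j-1}\mathcal{O}(h^{p})=\mathcal{O}(h^{p+1})$. The $j=p+1$ term contributes $h^p\int_0^\infty\int_\MM u\,A_{p+1}^*\deriv\mu_\infty\,\deriv t+\mathcal{O}(h^{p+1})$, which is the stated formula.

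The main obstacle is the uniform-in-$t$ and uniform-in-$h$ integrability in Step 2 on a noncompact manifold. The remainders $R_p^h(u(t,\cdot),x)$ must be bounded by $C(1+r(x)^\kappa)\,P(t)e^{-\lambda t}$. This requires the $C^k$ norms in Assumption~\ref{assumption:H3}, defined via coordinate derivatives, to be reconciled with frame derivatives $E_{d_1}\cdots E_{d_q}$ (using boundedness of the $E_d$ from Assumption~\ref{assumption:2.1}). It also requires the moments of $\mu^h$ to be bounded independently of $h$, which I would derive from Assumption~\ref{assumption:2.7} together with a Lyapunov-type argument based on~\eqref{equation:elliptic}. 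I expect this justification of exchanging $\int_0^\infty$, $\int\deriv\mu^h$ and the expansion to be the delicate point; the algebra in Steps 2 and 3 is then routine.
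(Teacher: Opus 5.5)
Your proof is correct, but it takes a different route from the paper.

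\textbf{The paper's route.} It follows the weak backward error analysis of Debussche--Faou. It builds:
\begin{itemize}
\item the modified generator $\tilde{\mathcal{L}}_\tau=\sum_k\tau^kL_k$, with $L_0=\mathcal{L}$ and $L_n$ determined recursively by the $A_j$;
\item the hierarchy $v_n$ of Proposition~\ref{proposition:4.1};
\item the correction densities $\rho_n$ of Lemma~\ref{lemma:sequence}.
\end{itemize}
In Lemma~\ref{lemma:second} it then shows $\int\varphi\,\deriv\mu^h=\int\varphi\sum_{n\le N}h^n\rho_n+\mathcal{O}(h^{N+1})$. This is done by telescoping local errors along the numerical trajectory and letting the number of steps tend to infinity. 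The hypotheses $A_j^*\deriv\mu_\infty=0$ for $j\le p$ force $\rho_1=\dots=\rho_{p-1}=0$. The leading coefficient $\int\varphi\rho_p$ is then rewritten via \eqref{equation:lim_u}.

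\textbf{Your route.} You bypass the modified generator entirely. You test the one-step stationarity of $\mu^h$ against the Poisson solution $\int_0^\infty(u(t,\cdot)-\overline{\varphi})\,\deriv t$, a Poisson-equation argument in the spirit of Talay and of Mattingly--Stuart--Tretyakov. You then upgrade the crude bound $e(\cdot,h)=\mathcal{O}(h)$ by a bootstrap over finitely many auxiliary test functions $\psi_j$.

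\textbf{Comparison.} Your argument is shorter. It only needs the invariance identity for $\mu^h$ and the expansion of Assumption~\ref{assumption:3.2} up to order $p+1$, and it never requires approximating $\mu^h$ along trajectories. The paper's argument yields more: a full asymptotic expansion $\rho^h\approx\sum_nh^n\rho_n$ of the numerical invariant density. That expansion is the natural starting point for the modified-equation program announced in the conclusion.

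\textbf{One correction to your justification.} Uniform-in-$h$ moment bounds for $\mu^h$ do not follow from Assumption~\ref{assumption:2.7}, which only controls moments on a finite horizon $[0,T]$. Nor do they follow from \eqref{equation:elliptic}, where $\lambda$ may be positive. On a noncompact manifold you must add this as a hypothesis, for instance a discrete Lyapunov condition for the scheme. You also need enough control on the coefficients of the $A_j$ that the sup-based $C^k$ norms of your auxiliary functions $\psi_j$, which Assumption~\ref{assumption:H3} requires, are finite.

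The paper's proof tacitly needs the same two properties, since it works with sup norms throughout (e.g.\ the $\|\cdot\|_\infty$ bound in Proposition~\ref{proposition:4.1}). Both are automatic on compact $\mathcal{M}$ such as those in the experiments.
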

\begin{proposition}
 Under Assumptions~\ref{assumption:2.1},~\ref{assumption:2.3},~\ref{assumption:2.6},~\ref{assumption:2.7}, and~\ref{assumption:H1}, if the Taylor expansion~\ref{assumption:3.2} satisfies the same expression that in~\cite{Bronasco25hoi}
 \[ A_j = \frac{1}{j!} \mathcal{L}^j, \quad j = 1,\dots,p,\]
 then the integrator is of order \(p\) for the invariant measure.
\end{proposition}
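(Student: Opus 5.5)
The plan is to deduce the proposition from Theorem~\ref{theoreme:characterization}. By that theorem it suffices to check that $A_j^*\deriv\mu_\infty=0$ for $j=2,\dots,p$. The consistency condition $A_1=\mathcal{L}$ is automatic from the hypothesis with $j=1$. With $A_j=\frac{1}{j!}\mathcal{L}^j$ we get $A_j^*=\frac{1}{j!}(\mathcal{L}^*)^j$. Writing $(\mathcal{L}^*)^j\deriv\mu_\infty=(\mathcal{L}^*)^{j-1}\bigl(\mathcal{L}^*\deriv\mu_\infty\bigr)$, Assumption~\ref{assumption:H1} gives $\mathcal{L}^*\deriv\mu_\infty=0$, so every $A_j^*\deriv\mu_\infty$ vanishes.

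To make this rigorous I will interpret $A_j^*\deriv\mu_\infty=0$ in the weak sense used in the proof of Theorem~\ref{theoreme:characterization}, namely $\int_\MM A_j\psi\,\deriv\mu_\infty=0$ for all $\psi\in C_P^\infty(\MM)$. Since $\mathcal{L}$ maps $C_P^\infty(\MM)$ into itself ($F\in\mathfrak{X}_P$ and the $E_d$ are smooth and bounded by Assumption~\ref{assumption:2.1}), the function $\phi=\mathcal{L}^{j-1}\psi$ lies in $C_P^\infty$. Invariance then gives
\[
\int_\MM \mathcal{L}^j\psi\,\deriv\mu_\infty=\int_\MM \mathcal{L}\phi\,\deriv\mu_\infty=0 .
\]
This is exactly the duality statement $\mathcal{L}^*\deriv\mu_\infty=0$ tested against $\phi$. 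Alternatively, it follows from stationarity: $\int u(t,\cdot)\deriv\mu_\infty$ is constant in $t$, so differentiating at $t=0$ via the Kolmogorov equation~\eqref{equation:Kolmogorov} gives $\int\mathcal{L}\phi\,\deriv\mu_\infty=0$.

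The main obstacle is justifying this integration by parts on a possibly noncompact $\MM$. Test functions have only polynomial growth, so one needs $\mu_\infty$ to integrate $1+r^\kappa$. I would obtain this from the Lyapunov-type bound~\eqref{equation:elliptic} in Assumption~\ref{assumption:2.1}, which yields finite moments of $\mu_\infty$. Where the weak formulation must hold for all of $C_P^\infty$, I would use a cutoff argument: multiply by smooth compactly supported cutoffs in $r$ and pass to the limit by dominated convergence. The remaining hypotheses of Theorem~\ref{theoreme:characterization} (spectral gap, Poisson equation, moment bounds) are assumed in the statement. The theorem then gives order $p$ for the invariant measure, and in fact the explicit leading error term $h^p\int_0^\infty\int u\,A_{p+1}^*\deriv\mu_\infty\,\deriv t$.
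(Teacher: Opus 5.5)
\textbf{The core computation.} This part of your proposal is right, and it is presumably all the authors have in mind. The paper gives no separate proof and places the proposition immediately after Theorem~\ref{theoreme:characterization} as a corollary. There, $A_j^*\deriv\mu_\infty=\frac{1}{j!}(\mathcal{L}^*)^j\deriv\mu_\infty=0$ follows from $\mathcal{L}^*\deriv\mu_\infty=0$ exactly as you write. Your weak formulation, using that $\mathcal{L}$ maps $C_P^\infty(\mathcal{M})$ into itself, is fine.

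\textbf{The gap: the reduction to the theorem.} You state that the remaining hypotheses of Theorem~\ref{theoreme:characterization} (spectral gap, Poisson equation) are assumed in the statement. They are not.
The proposition assumes only Assumptions~\ref{assumption:2.1}, \ref{assumption:2.3}, \ref{assumption:2.6}, \ref{assumption:2.7} and~\ref{assumption:H1}. The theorem additionally requires Assumption~\ref{assumption:H2} and Assumption~\ref{assumption:H3}, and neither follows from the listed ones.
Assumption~\ref{assumption:2.3} controls $u$ only on a bounded window $(0,T)$, with $T$-dependent constants. Assumption~\ref{assumption:2.7} gives moments only up to a finite horizon. So nothing in the hypotheses prevents the local errors, of size $h^{p+1}$ times derivatives of $u(t_n,\cdot)$, from accumulating without bound as $n\to\infty$. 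Some uniform-in-time decay of the derivatives of $u$ is indispensable.

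The route suggested by the hypothesis list hits the same wall. That route would be: weak order $p$ from~\cite{Bronasco25hoi} under Assumptions~\ref{assumption:2.1}, \ref{assumption:2.3}, \ref{assumption:2.6}, \ref{assumption:2.7}, then transfer to the invariant measure as in~\cite{Talay90eot}. The transfer needs weak-error constants uniform in $T$.

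\textbf{How to fix it.} You can state explicitly that you work under the full hypotheses of Theorem~\ref{theoreme:characterization}, that is, add Assumptions~\ref{assumption:H2} and~\ref{assumption:H3}. This is the reading under which the proposition is a corollary of the theorem. Alternatively, supply these assumptions: on a compact $\mathcal{M}$, for instance, they follow from the ellipticity of $\mathcal{L}$ (the $E_d$ form a frame), via Doeblin-type exponential mixing, parabolic smoothing and Fredholm theory.

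\textbf{A smaller point.} The bound~\eqref{equation:elliptic} does not by itself give finite moments of $\deriv\mu_\infty$. Since $\lambda\in\mathbb{R}$ is allowed to be nonnegative, it is in general only a non-explosion condition, not a Lyapunov drift condition. Your cutoff argument needs functions in $C_P^\infty(\mathcal{M})$ to be integrable against $\deriv\mu_\infty$. That integrability has to be taken as part of the framework, where it is implicit in the definition of $e(\varphi,h)$ and in Assumptions~\ref{assumption:H2} and~\ref{assumption:H3}. It cannot be derived from Assumption~\ref{assumption:2.1}.
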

This characterization is extended to post-processed method~\eqref{equation:post-process}, in the spirit of~\cite{Vilmart15pif}.
\begin{theorem}\label{theorem:theorem_post}
 Under the same hypotheses as Theorem~\ref{theoreme:characterization}, consider a numerical consistent ergodic method of the form~\eqref{equation:post-process} that satisfies
 \[ A_{j+1}^* \deriv \mu_\infty = \overline{A_j}^* \deriv \mu_\infty = 0 \text{ for } j= 1 \dots p-1, \quad {\left(A_{p+1}+[\mathcal{L},\overline{A_p}]\right)}^*\deriv \mu_\infty=0.\]
 Then the method is at least of order $p+1$ for the invariant measure of equation~\eqref{equation:Langevin}.
\end{theorem}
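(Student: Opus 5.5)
Our plan is to reduce Theorem~\ref{theorem:theorem_post} to Theorem~\ref{theoreme:characterization}: we exhibit the expected post-processed average under $\deriv\mu^h_\infty$ as an integral against a modified measure $\deriv\mu_\infty$ plus corrections, following~\cite{Vilmart15pif}. Write $\deriv\bar\mu^h := \overline{\Psi}_h{}_*\deriv\mu^h_\infty$ for the law of the post-processed output at stationarity. The goal is to show, for all $\varphi\in C^\infty_P(\MM)$,
\[
\int_\MM \varphi\,\deriv\bar\mu^h - \int_\MM\varphi\,\deriv\mu_\infty = \mathcal{O}(h^{p+1}).
\]

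\textbf{Step 1: expand the post-processor.} By Assumption~\ref{assumption:3.2},
\[
\int\varphi\,\deriv\bar\mu^h=\int \Big(\varphi+\sum_{j\le p}h^j\overline{A_j}\varphi\Big)\deriv\mu^h_\infty+\mathcal{O}(h^{p+1}).
\]
The remainder is controlled using the moment bounds (Assumption~\ref{assumption:2.7}) passed to the stationary measure.

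\textbf{Step 2: invert the generator via Assumption~\ref{assumption:H2}.} As in the proof of Theorem~\ref{theoreme:characterization}, we use stationarity, $\int(\mathbb{E}[\psi(X_1)|X_0=x]-\psi(x))\deriv\mu^h_\infty=0$, with the test function $\psi=\overline{\varphi}$. Here $\overline{\varphi}$ solves the Poisson problem $\mathcal{L}\overline\varphi=\varphi-\int\varphi\,\deriv\mu_\infty$; solvability and polynomial growth follow from Assumptions~\ref{assumption:H2} and~\ref{assumption:H3}, by writing $\overline\varphi=-\int_0^\infty (u(t)-\int\varphi\,\deriv\mu_\infty)\deriv t$. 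This gives
\[
0=\int\Big(h\mathcal{L}+\sum_{j=2}^{p+1}h^jA_j\Big)\overline\varphi\,\deriv\mu^h_\infty+\mathcal{O}(h^{p+2}).
\]
Dividing by $h$ expresses $\int(\varphi-\int\varphi\,\deriv\mu_\infty)\deriv\mu^h_\infty$ as $-\sum_{j\ge2}h^{j-1}\int A_j\overline\varphi\,\deriv\mu^h_\infty$. We then iterate this identity: each term $\int A_j\overline\varphi\,\deriv\mu^h_\infty$ is again expanded by applying the same identity with $\varphi$ replaced by $A_j\overline\varphi$ minus its $\mu_\infty$-mean. This produces an expansion $\int\varphi\,\deriv\mu^h_\infty=\int\varphi\,\deriv\mu_\infty+\sum_{k\ge1}h^k\int \mathcal{B}_k\overline\varphi\,\deriv\mu_\infty+\mathcal{O}(h^{p+1})$.

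\textbf{Step 3: kill the lower orders.} The hypotheses $A_{j+1}^*\deriv\mu_\infty=0$ for $j\le p-1$ make every contribution up to order $h^{p-1}$ vanish. The first surviving term is $h^p\int A_{p+1}\overline\varphi\,\deriv\mu_\infty$. Likewise, the hypothesis $\overline{A_j}^*\deriv\mu_\infty=0$ for $j\le p-1$ shows that the post-processor terms from Step 1 contribute only at order $h^p$, namely through $h^p\int\overline{A_p}\varphi\,\deriv\mu_\infty$. Using $\varphi=\mathcal{L}\overline\varphi+\mathrm{const}$ and $\overline{A_p}1=0$, this equals $h^p\int\overline{A_p}\mathcal{L}\overline\varphi\,\deriv\mu_\infty$. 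Since $\mathcal{L}^*\deriv\mu_\infty=0$ (Assumption~\ref{assumption:H1}), we have $\int\mathcal{L}\overline{A_p}\overline\varphi\,\deriv\mu_\infty=0$, so this may be rewritten as $-h^p\int[\mathcal{L},\overline{A_p}]\overline\varphi\,\deriv\mu_\infty$.

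\textbf{Step 4: combine and identify the sign.} Collecting Steps 2 and 3, the total $h^p$ coefficient is, up to an overall sign,
\[
\int \big(A_{p+1}+[\mathcal{L},\overline{A_p}]\big)\overline\varphi\,\deriv\mu_\infty .
\]
This vanishes by hypothesis. The sign conventions must be tracked carefully here, since the commutator has to appear with a plus sign for the hypothesis to match; we expect to confirm this against the Euclidean computation of~\cite{Vilmart15pif}.

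\textbf{Main obstacle.} The hardest point is rigor for the remainders in Step 2. We need uniform-in-$h$ bounds on the moments of $\deriv\mu^h_\infty$, and polynomial-growth control of the iterated functions $A_j\overline\varphi$ and of their Poisson solutions. We would obtain the moment bounds from a discrete Lyapunov argument based on $\mathcal{L}r^2\le\nu+\lambda r^2$, handling the cut locus as in~\cite{Bronasco25hoi}. The $C^k$ regularity of the Poisson solutions would come from Assumption~\ref{assumption:H3}.
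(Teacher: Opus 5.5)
Your proposal is correct. Its cancellation mechanism is the paper's: the post-processor's $h^p$ term $\int\overline{A_p}\varphi\,\deriv\mu_\infty$ is shown, using $\mathcal{L}^*\deriv\mu_\infty=0$, the fact that $\overline{A_p}$ annihilates constants, and the commutator hypothesis, to be exactly the negative of the leading $h^p$ error term of the unprocessed chain.

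Where you differ is in how that leading term is obtained.

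\textbf{The paper's route.} The paper takes the leading term ready-made from Theorem~\ref{theoreme:characterization}, which is proved via the modified generators $L_n$ and the expansion $\rho^h\approx\sum_n h^n\rho_n$. It applies that theorem to the composite test function $\varphi\circ\overline{\Psi}_h$, so the post-processor expansion is only ever integrated against $\deriv\mu_\infty$. It then uses \eqref{equation:lim_u} to get $\int\overline{A_p}\varphi\,\deriv\mu_\infty=\int_0^\infty\int_{\mathcal{M}}u\,[\mathcal{L},\overline{A_p}]^*\deriv\mu_\infty\,\deriv t=-\int_0^\infty\int_{\mathcal{M}}u\,A_{p+1}^*\deriv\mu_\infty\,\deriv t$.

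\textbf{Your route.} You integrate the post-processor expansion against $\deriv\mu^h_\infty$ instead. You then re-derive the unprocessed expansion yourself, from the invariance of $\deriv\mu^h_\infty$ tested on the Poisson solution $\overline\varphi$, gaining one power of $h$ per pass. This bootstrap is best written as an explicit induction on the power of $h$, using $A_j^*\deriv\mu_\infty=0$ for $2\le j\le p$. Your constant $-\int A_{p+1}\overline\varphi\,\deriv\mu_\infty$ coincides with the paper's, because $\overline\varphi=-\int_0^\infty\bigl(u(t)-\int\varphi\,\deriv\mu_\infty\bigr)\deriv t$ and $A_{p+1}$ kills constants. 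Your identity $\int\overline{A_p}\mathcal{L}\overline\varphi\,\deriv\mu_\infty=-\int[\mathcal{L},\overline{A_p}]\overline\varphi\,\deriv\mu_\infty$ is the paper's computation with the time integral already performed.

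\textbf{Trade-offs.} The paper's route is much shorter given Theorem~\ref{theoreme:characterization}. Its price is applying that theorem to an $h$-dependent test function, so its constants must be uniform in $h$. Yours is self-contained and avoids the backward-error machinery. It also makes explicit two things the paper glosses over. First, discarding the $\overline{A_j}$ terms with $j\le p-1$ needs $\int\psi\,\deriv\mu^h_\infty-\int\psi\,\deriv\mu_\infty=\mathcal{O}(h^p)$. Second, uniform-in-$h$ moment bounds on $\deriv\mu^h_\infty$ are required, and Assumption~\ref{assumption:2.7}, a finite-time bound with constant $C(T)$, does not supply them.

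\textbf{Two minor corrections.} The sign needs no external check. Your Step~2 identity already carries the minus sign, so the total $h^p$ coefficient is exactly $-\int\bigl(A_{p+1}+[\mathcal{L},\overline{A_p}]\bigr)\overline\varphi\,\deriv\mu_\infty$, which is what the hypothesis kills. Also, Assumption~\ref{assumption:H2} concerns $\mathcal{L}^*$, not $\mathcal{L}$. Solvability of $\mathcal{L}\overline\varphi=\varphi-\int\varphi\,\deriv\mu_\infty$ comes from Assumption~\ref{assumption:H3} through your integral formula.
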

\subsection{Post-processed frozen-flow methods of high order for sampling ergodic dynamics}\label{sub:High order frozen methods for sampling ergodic dynamics}
In this section, we leverage Theorem~\ref{theoreme:characterization} to derive
a system of conditions whose resolution yields three new order-2 methods for
accurately sampling the invariant measure. Higher orders can be obtained by
extending the Taylor expansion to the desired degree. Our analysis focuses on
frozen-flow schemes, which generalize classical Lie group
methods~\cite{Iserles00lgm,Owren99rkm}.
See Section~\ref{Section:Intrinsic order conditions with planar exotic forests} for details of the Butcher series formalism used on the Talay-Tubaro expansion.
\begin{definition}
 For all \(p,q \in \MM\), \(\exp\left(t g^d(q) E_d\right) \cdot p\) is the solution of the ODE
 \[Y'(t) = g^d(q) E_d(Y(t)), \quad Y(0) = p.\]
\end{definition}
\begin{remark}
 The Euler method \(X_{n+1} = X_n + h f^d(X_n) + \sqrt{2h} \xi^d_n\) on \(\mathds{R}^d\) becomes on \(\MM\) the frozen-flow Euler method:
 \begin{equation}\label{equation:frozenEuler}
 X_{n+1} = \exp \left( \left( h f^d(X_n) + \sqrt{2h} \xi^d_n \right) E_d \right) X_n.
 \end{equation}
 In general, the method~\eqref{equation:frozenEuler} does not coincide with the Riemannian Langevin method:
 \[ X_{n+1} = \exp^{Riem} \left( -h \nabla V (X_n) + \sqrt{2h} \xi^d_{n} E_d \right) X_n, \]
 for equation~\eqref{equation:Langevin}, based on the Levi-Civita connection and proposed in~\cite{Bharath23sae}.
\end{remark}
Following~\cite{Vilmart15pif} and the frozen-flow methods~\cite{Bronasco25hoi}, we consider the following class of methods and we define with the same way the class for the post-processor~\eqref{equation:post-process} \[\overline{X}_N = \overline{\Psi}_h(X_N).\]

\begin{align}
 H_n^i = & \exp \left( \left( h Z^0_{i,j,K} f^d(H_n^j)
 + \sqrt{h} Z_{i,K}^d \xi^{d}_n \right) E_d \right) \cdots \notag \\ & \cdots \exp \left( \left( h Z^0_{i,j,1} f^d(H_n^j)
 + \sqrt{h} Z_{i,1}^d \xi^{d}_n \right) E_d \right) X_n, \notag \\
 X_{n+1} = & \exp \left( \left( h z^0_{i,K} f^d(H_n^i)
 + \sqrt{h} z^d_{K} \xi^{d}_n \right) E_d \right) \cdots \label{equation:post-processed} \\
 & \cdots \exp \left( \left( h z^0_{i,1} f^d(H_n^i)
 + \sqrt{h} z^d_{1} \xi^{d}_n \right) E_d \right) X_n, \notag
\end{align}
where the \( \xi^{d}_n,\overline{\xi^{d}_N} \) are independent standard Gaussian random variables and where the \(Z^0_{i,j,k},z^0_{i,k},z^d_{k},Z^d_{i,k},\overline{z_{k}^{d}},\overline{z^0_{i,k}},\overline{Z_{i,k}^{d}},\overline{Z^0_{i,j,k}}\) are fixed real numbers.

\begin{remark}
    As a forest of ordre \(p\) contains at most \(2p\) trees, one only needs to consider approximations of Gaussian random variables with finite moments of all order and the same \(2p\) first moments that includes vanishing odd moments.
\end{remark}

\begin{definition}
 For a finite subset \( S \subset {\{1,\dots,K\}}^n\) of multi-indices and for \(f: S \to \mathds{R}\), the factorial sum is given by
 \[ \underset{\mathbf{k} \in S}{^!\sum} f(\mathbf{k}) = \underset{\mathbf{k} \in S}{\sum} \frac{1}{\mathbf{k}!} f(\mathbf{k}) , \quad (k_1,\ldots,k_n)! = k_1 ! \cdots k_n ! .\]
\end{definition}
We now express the general method as an exotic S-serie (see Definition~\ref{definition:Sseries}), with the coefficient map of the numerical methods \(a_w\), and the algebraic formalism of Section~\ref{Section:Intrinsic order conditions with planar exotic forests}.
\begin{proposition}
 The expression of the first-order coefficients of a method of the form~\eqref{equation:post-processed} are given by
 \[A_1 \varphi (x) = \underset{k}{{\sum}} z^0_{i,k} f^i(x) E_i[\varphi](x) + \underset{k_2 \leq k_1}{{^!\sum}} z_{k_1}^{d_1} z_{k_2}^{d_1} E_{d_1} [ E_{d_1}[\varphi]](x) .\]
 For the second order, the expression of coefficients of \(A_2\) are given in Table~\ref{table:tableau_Coefficient_general}.
 \begin{table}[H]
 \centering
 \begin{tabular}{c|c|c|c} 
 {Forest \(\pi\)} & {Differential op.\ \(\mathds{F}^F(\pi)\)} & {Coeff.\ \(a_w(\pi)\) of \(A_2\) } & { Coeff.\ \(\crochet{\mathcal{L}}{\overline{A_1}}\) } \\
\hline 
 \(\forestA\) & \( f^j E^j[f^i] E_i[\varphi] \) & $ \underset{k}{{\sum}} z^0_{i,k} Z^0_{i,j,k} $ & $0$\\
 \(\forestB\) & \( E_{d_1} [ E_{d_1}[f^i]]E_i[\varphi] \) & $ \underset{k}{{\sum}} \underset{k_2 \leq k_1}{{^!\sum}} z^0_{i,k} Z_{i,k_2}^{d_1} Z_{i,k_1}^{d_1} $ & $ C_{\crochet{\mathcal{L}}{\overline{A_1}}} $ \\
 \(\forestC\) & \(f^j f^i E_j[E_i[\varphi]]\) & $ \underset{k_2 \leq k_1}{{^!\sum}} z^0_{j,k_2} z^0_{i,k_1} $ & $0$\\
 \(\forestD\) & \( E_{d_1}[f^i] E_i[E_{d_1}[\varphi]] \) & $ \underset{k}{{\sum}} \underset{k_2 \leq k_1}{{^!\sum}} (z_{i,k}^{d_1} z^0_{i,k_2}) z_{k_1}^{d_1} $ & $0$\\
 \(\forestE\) & \(E_{d_1}[f^i] E_{d_1}[E_i[\varphi]]\) & $ \underset{k}{{\sum}} \underset{k_2 \leq k_1}{{^!\sum}} z_{k_2}^{d_1} (z_{i,k}^{d_1} z^0_{i,k_1}) $ & $ 2 C_{\crochet{\mathcal{L}}{\overline{A_1}}} $\\
 \(\forestF\) & \(f^i E_i[E_{d_1}[E_{d_1}[\varphi]]]\) & $ \underset{k_3 \leq k_2 \leq k_1}{{^!\sum}} z^0_{i,k_3} z_{k_2}^{d_1} z_{k_1}^{d_1} $ & $-C_{\crochet{\mathcal{L}}{\overline{A_1}}} $\\
 \(\forestG\) & \(f^i E_{d_1}[E_i[E_{d_1}[\varphi]]]\) & $ \underset{k_3 \leq k_2 \leq k_1}{{^!\sum}} z_{k_3}^{d_1} z^0_{i,k_2} z_{k_1}^{d_1} $ & $0$\\
 \(\forestH\) & \(f^i E_{d_1}[E_{d_1}[E_i[\varphi]]]\) & $ \underset{k_3 \leq k_2 \leq k_1}{{^!\sum}} z_{k_3}^{d_1} z_{k_2}^{d_1} z^0_{i,k_1} $ & $ C_{\crochet{\mathcal{L}}{\overline{A_1}}} $\\
 \(\forestI\) & \(E_{d_2}[E_{d_2}[E_{d_1}[E_{d_1}[\varphi]]]]\) & $\underset{k_4 \leq k_3 \leq k_2 \leq k_1}{{^!\sum}} z_{k_4}^{d_2}z_{k_3}^{d_2}z_{k_2}^{d_1}z_{k_1}^{d_1}$ & $0$\\
 \(\forestJ\) & \(E_{d_2}[E_{d_1}[E_{d_2}[E_{d_1}[\varphi]]]]\) & $\underset{k_4 \leq k_3 \leq k_2 \leq k_1}{{^!\sum}} z_{k_4}^{d_2}z_{k_3}^{d_1}z_{k_2}^{d_2}z_{k_1}^{d_1}$ & $0$\\
 \(\forestK\) & \(E_{d_1}[E_{d_2}[E_{d_2}[E_{d_1}[\varphi]]]]\) & $\underset{k_4 \leq k_3 \leq k_2 \leq k_1}{{^!\sum}} z_{k_4}^{d_1}z_{k_3}^{d_2}z_{k_2}^{d_2}z_{k_1}^{d_1}$ & $0$\\
 \end{tabular}
 \caption{\scriptsize S-series coefficients of \(A_2 \varphi = S_h(a_w) \vartriangleright \varphi\) for a method of the form~\eqref{equation:post-processed}, where the sums over the \(d_q\) and the \(i,j\) are omitted for conciness and where we write \(C_{\crochet{\mathcal{L}}{\overline{A_1}}} = \underset{k}{{\sum}} \overline{z}^0_{i,k} - \underset{k_2 \leq k_1}{{^!\sum}} \overline{z}_{k_1}^{d_1} \overline{z}_{k_2}^{d_1}\).}\label{table:tableau_Coefficient_general}
 \end{table}
\end{proposition}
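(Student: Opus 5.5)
We would prove the proposition by computing $\espe{\varphi(X_1)\mid X_0=x}$ directly, expanding each frozen flow $\exp(tG)\cdot p$ with a Lie/Taylor series. For a frozen vector field $G = g^d E_d$ whose coefficients $g^d$ are evaluated at a fixed point, we have
\[ \varphi(\exp(G)\cdot p) = \sum_{m\ge 0} \frac{1}{m!}\, \big(G^{\cdot m}\vartriangleright \varphi\big)(p). \]
Here $G^{\cdot m}\vartriangleright\varphi = g^{d_1}\cdots g^{d_m}E_{d_1}[\cdots E_{d_m}[\varphi]\cdots]$, since the coefficients are constants along the flow. Composing $K$ such flows gives
\[ \varphi(X_1)=\sum_{m_1,\dots,m_K} \frac{1}{m_1!\cdots m_K!}\, \big(G_1^{m_1}\cdots G_K^{m_K}\vartriangleright\varphi\big)(X_0). \]
The ordering of the factors, outermost flow acting last on $\varphi$, is what produces the ordered multi-indices $k_q\le\cdots\le k_1$. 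The $1/\mathbf{k}!$ weights of the factorial sum arise exactly when repeated indices coincide, which is the source of the $^!\sum$ notation.

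The second step is to substitute $G_k = h z^0_{i,k} f^d(H^i)E_d + \sqrt{h} z^d_k\xi^d E_d$ and to expand the internal stages $f^d(H^i)$ in the same way around $X_0$. Up to order $h$ we have $f^d(H^i)=f^d + \sqrt h\, Z^{d_1}_{i,k}\xi^{d_1}E_{d_1}[f^d]+\dots$, and this recursion is what creates the grafted forests in the table.

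We then take expectations using $\espe{\xi^{d}\xi^{d'}}=\delta_{dd'}$ and the vanishing of odd moments. For fourth moments we use $\espe{\xi^{d_1}\xi^{d_2}\xi^{d_3}\xi^{d_4}}$ equal to the sum over pairings. Collecting the $h^1$ terms gives $A_1$. The drift term comes from a single $G_k$ with $m_k=1$. The diffusion term $E_{d_1}[E_{d_1}[\varphi]]$ comes from pairs of noise factors at positions $k_2\le k_1$, including the diagonal with weight $1/2!$.

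Collecting the $h^2$ terms gives $A_2$, sorted by pattern:
\begin{itemize}
\item terms with one drift and one internal-stage drift correction give the first forest;
\item a drift with a second-order noise correction of the stage gives the second;
\item two drifts in different or equal flows give the third;
\item mixed drift-noise terms, where the noise sits either inside the stage argument or in the flow and is paired with a noise factor at another position, give the fourth to eighth forests, distinguished by where the pairing partner sits in the ordered word;
\item four noise factors with the three Wick pairings give the last three forests.
\end{itemize}
For the last column, we compute $\crochet{\mathcal L}{\overline{A_1}}$ with $\overline{A_1}=\overline{C}_0 f^iE_i+\overline{C}_2 E_dE_d$, where $\overline C_0$ and $\overline C_2$ are the post-processor analogues of the $A_1$ coefficients. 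The bilinear operator commutator $[f^iE_i+E_dE_d,\ \overline C_0 f^iE_i+\overline C_2E_dE_d]$ reduces, by cancellation of the $ff$ and fourth-order terms, to $(\overline C_0-\overline C_2)$ times combinations of the second and the fifth to eighth forests. Expanding $E_dE_d(f^iE_i\varphi)$ by the Leibniz rule then produces the stated coefficients $1,2,-1,1$.

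The main obstacle is the bookkeeping of the ordered multi-indices. Two things must be checked carefully: that the combinatorial weights from the exponential series, the nested stage expansions and the Wick pairings combine exactly into the stated factorial sums with the correct inequality ordering, and that the pairing positions are assigned to the right planar forest among the fourth, fifth, seventh and eighth. We would handle this systematically in the exotic S-series formalism of Section~\ref{Section:Intrinsic order conditions with planar exotic forests}, and sanity-check it against the Euclidean case, where the flows commute.
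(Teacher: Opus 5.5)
Your proposal is correct and follows essentially the paper's route: the paper gives no argument beyond expanding the compositions of frozen flows and the internal stages as an exotic S-series in the formalism of \cite{Bronasco25hoi} and taking Gaussian expectations, which is exactly your Lie-series and Wick bookkeeping, where the word $G_1^{m_1}\cdots G_K^{m_K}$ (the first-applied flow giving the outermost derivative) produces the ordered factorial sums. Your reduction $[\mathcal{L},\overline{A_1}]=(\overline{C}_0-\overline{C}_2)[E_d\cdot E_d,F]$ and its Leibniz expansion into $\forestB+2\forestE+\forestH-\forestF$ also correctly reproduce the last column of the table.
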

\begin{remark}
 The use of a post-processor only modifies the order conditions by the coefficient
 \( C_{\crochet{\mathcal{L}}{\overline{A_1}}} = \underset{k}{{\sum}} \overline{z}^0_{i,k} - \underset{k_2 \leq k_1}{{^!\sum}} \overline{z}_{k_1}^{d_1} \overline{z}_{k_2}^{d_1}. \)
 If \(\overline{A_1} \propto \mathcal{L} \), we thus recover the order conditions without post-processor.
\end{remark}
Since we only express our conditions in terms of exotic forests, to simplify the notation, we add the coefficients of \(\crochet{\mathcal{L}}{\overline{A_1}}\), in the post-processed case, in thoses of \(A_2\).

Using the Butcher series formalism of Section~\ref{Section:Intrinsic order conditions with planar exotic forests}, we obtain a set of conditions that ensures the order 2 for the invariant measure of equation~\eqref{equation:Langevin}.
\begin{theorem}\label{theorem:conditions2}
 Under the assumptions of Theorems~\ref{theoreme:characterization} and~\ref{theorem:theorem_post}, a method~\eqref{equation:method} or a post-processed method~\eqref{equation:post-process} is of order 2 for the invariant measure for equation~\eqref{equation:Langevin} if the following 5 order conditions are satisfied:
 \[ a_w({\forestA}) = a_w({\forestB}) , \quad a_w({\forestF}) = a_w({\forestI}) , \quad a_w({\forestD}) + a_w({\forestJ}) = a_w({\forestG}) ,\]
 \[ a_w({\forestA}) + a_w({\forestH}) = a_w({\forestE}) + a_w({\forestK}) , \quad a_w({\forestC}) + a_w({\forestK}) = a_w({\forestD}) + a_w({\forestH}) . \]
 \begin{proof}
 Table~\ref{table:order2} gives the reduction of exotic forests of order 2. Therefore, the S-series
 \[A_2 \varphi = S_h(a_w) \vartriangleright \varphi\]
 can be expressed as a linear combination of irreducible forests, whose coefficients provide the second-order conditions. The details are postponed to Subsection~\ref{sub:Integration by parts for the invariant measure}. 
 \end{proof}
\end{theorem}
\begin{remark}
 Recall from~\cite{Bronasco22ebs}, for \(\MM = \mathds{R}^D\), the forests are not ordered and there are one condition for the first order and three for the second one.
\end{remark}
\begin{remark}
 For the second order, there are 5 equations with 11 variables and for the third order, there are 40 equations and with 95 variables.
 We recall from~\cite{Bronasco25hoi} that for the weak error, there are 8 equations for the order 2 and 73 for the order 3, so the invariant measure allows for a significant reduction of the number of order conditions.
\end{remark}
We propose the following new post-processed frozen-flow method to solve equation~\eqref{equation:Langevin}. In the Euclidean setting, Method~\ref{method:pp} rewrites as the Leimkuhler-Matthews method~\cite{Leimkuhler13rco} with the post-processor formulation from~\cite{Vilmart15pif}.
\begin{algorithm}[H]
 \caption{Post-processed frozen-flow method for equation~\eqref{equation:Langevin}}
 \begin{algorithmic}\label{method:pp}
 \STATE$H_n = \exp \left( \frac{\sqrt{2h}}{2} \xi^d_n \right) E_d X_n$,
 \STATE$X_{n+1} = \exp \left( \frac{5h}{4} F(H_n) + \frac{\sqrt{2h}}{4} \xi^d_n E_d \right)\exp \left( - \frac{h}{4} F(H_n) + \frac{3\sqrt{2h}}{4} \xi^d_n E_d \right) X_n$,
 \STATE$\overline{X_N} = \exp \left( \frac{\sqrt{2h}}{2} \overline{\xi}^d_N E_d \right) X_N$.
 \end{algorithmic}
 \end{algorithm}
\begin{corollary}
 Method~\ref{method:pp} is explicit, of order 2 for the invariant measure of equation~\eqref{equation:Langevin} and weak order one, and uses one evaluation of $F$ and three exponential maps per step.
\end{corollary}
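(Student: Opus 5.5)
Our plan is to read Method~\ref{method:pp} as a particular instance of the class~\eqref{equation:post-processed}, compute its coefficients, and check the five conditions of Theorem~\ref{theorem:conditions2}. In the class~\eqref{equation:post-processed}, Method~\ref{method:pp} has one internal stage $H_n$. This stage is a single exponential with $Z^0=0$ and noise weight $Z^d_{1,1}=\frac{\sqrt2}{2}$. The update uses $K=2$ exponentials, with $z^0_{1,1}=-\frac14$, $z^0_{1,2}=\frac54$ (so $\sum_k z^0_k=1$) and $z^d_1=\frac{3\sqrt2}{4}$, $z^d_2=\frac{\sqrt2}{4}$. Here the ordering of the factors follows the convention that index $1$ is applied first. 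The post-processor has $\overline{z}^0=0$ and $\overline{z}^d_1=\frac{\sqrt2}{2}$.

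The first step is consistency, $A_1=\mathcal{L}$. The drift part is $\sum_k z^0_{k}=1$. The diffusion part is the factorial sum $\sum_{k_2\le k_1}\frac{1}{k_1!k_2!}z_{k_1}z_{k_2}$, which must equal $1$. Since the product of exponentials of the same noise direction combines into $\exp\big((z_1+z_2)\sqrt h\,\xi E\big)$ at the level of the $E_dE_d$ terms, this amounts to $\frac12(z_1+z_2)^2=\frac12\cdot 2=1$. The post-processor gives $C_{[\mathcal{L},\overline{A_1}]}=0-\frac12\cdot\frac12=-\frac14$. This value is inserted in the last column of Table~\ref{table:tableau_Coefficient_general} and added to the $A_2$ coefficients of $\forestB,\forestE,\forestF,\forestH$ with weights $1,2,-1,1$.

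The second step, which is the bulk of the work, is to evaluate the eleven coefficients $a_w(\pi)$ from Table~\ref{table:tableau_Coefficient_general}. The factorial sums are mechanical: one enumerates ordered tuples $k_q\le\dots\le k_1$ in $\{1,2\}$ and weights them by $1/\mathbf{k}!$. In the purely noise forests $\forestI,\forestJ,\forestK$ only the total $z_1+z_2=\sqrt2$ enters, which gives $a_w=\frac{1}{4!}(\sqrt2)^4\cdot$(multiplicity) $=\frac12$ for each (equivalently, the fourth moment term of $\exp(\sqrt{2h}\xi E)$). For $\forestA$, the absence of drift in $H_n$ gives $a_w(\forestA)=0$. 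For $\forestB$, the base value is $\frac12(\frac{\sqrt2}{2})^2=\frac14$, and the correction $-\frac14$ brings it to $0$, so the first condition holds. The genuinely asymmetric coefficients are those of $\forestC$ through $\forestH$, where the split $-\frac14,\frac54$ of the drift against $\frac{3\sqrt2}{4},\frac{\sqrt2}{4}$ of the noise matters. We then substitute all values into the remaining four linear identities of Theorem~\ref{theorem:conditions2} and check them.

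We expect this computation to be the main obstacle. The difficulty is keeping the ordering conventions in the factorial sums consistent with the planar forests, since swapping the order of the exponentials changes $\forestD,\forestE,\forestG,\forestH$. If an identity fails, the first thing to suspect is this convention, and we would recheck it against the Euclidean limit, where the method reduces to Leimkuhler--Matthews with the post-processor of~\cite{Vilmart15pif} and is known to be of order~2.

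The corollary then follows. Theorem~\ref{theorem:conditions2} combined with Theorem~\ref{theorem:theorem_post} gives order~2 for the invariant measure. For weak order one, consistency and Assumption~\ref{assumption:3.2} suffice. Weak order two fails because, for instance, $a_w(\forestA)=0\neq\frac12$, the value required by $A_2=\frac12\mathcal{L}^2$. Finally, the method is explicit and uses one evaluation $F(H_n)$ and three exponential maps per step: one for $H_n$ and two for $X_{n+1}$. The post-processor is applied only once at the end and does not count per step.
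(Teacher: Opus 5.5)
Your plan follows the paper's route. You instantiate Table~\ref{table:tableau_Coefficient_general} including the post-processor column, check the five identities of Theorem~\ref{theorem:conditions2}, and conclude with Theorem~\ref{theorem:theorem_post} at $p=1$. Your set-up is correct: the coefficient assignment and ordering convention, $A_1=\mathcal{L}$, $C_{[\mathcal{L},\overline{A_1}]}=-\frac14$, $a_w(\forestA)=0$, and the corrected $a_w(\forestB)=0$.

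\textbf{The error.} The value you commit to for the purely noise forests is wrong: in fact $a_w(\forestI)=a_w(\forestJ)=a_w(\forestK)=\frac16$, not $\frac12$. Only $z_1+z_2=\sqrt2$ enters, and the factorial sum is just $(z_1+z_2)^4/4!=\frac16$. Equivalently, the $h^2$ fourth-moment term of $\exp(\sqrt{2h}\,\xi^dE_d)$ is
\[
\frac{(\sqrt{2})^4}{4!}\,\espe{\xi^{d_1}\xi^{d_2}\xi^{d_3}\xi^{d_4}}\,E_{d_1}[E_{d_2}[E_{d_3}[E_{d_4}[\varphi]]]]=\frac16\,\mathds{F}^F(\forestI+\forestJ+\forestK)\vartriangleright\varphi .
\]
The three Wick pairings are exactly the three distinct forests, so there is no extra multiplicity factor. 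The value $\frac12$ is what the exact term $\frac12\mathcal{L}^2$ assigns to $\forestI$, with $0$ on $\forestJ,\forestK$. With your value, conditions two to five all fail; for instance $a_w(\forestF)=a_w(\forestI)$ would read $\frac16=\frac12$. The fix you planned, rechecking the ordering convention, would not locate this error.

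\textbf{The correct verification.} With $\frac16$, the computation you left open gives the following values, post-processor corrections included:
\[
a_w(\forestC)=\frac12,\quad a_w(\forestD)=\frac{\sqrt2}{16}(3z_2-z_1)=0,\quad a_w(\forestE)=1-\frac12,\quad a_w(\forestF)=-\frac{1}{12}+\frac14,\quad a_w(\forestG)=\frac16,\quad a_w(\forestH)=\frac{11}{12}-\frac14 .
\]
The five identities then read $0=0$, $\frac16=\frac16$, $0+\frac16=\frac16$, $0+\frac23=\frac12+\frac16$ and $\frac12+\frac16=0+\frac23$. So order $2$ for the invariant measure follows as you intended.

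Your remaining claims are fine: explicitness, one evaluation of $F$ and three exponentials per step, and weak order one, with weak order two failing since $a_w(\forestA)=0\neq\frac12$.
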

We propose two alternative schemes without post-processors to solve equation~\eqref{equation:Langevin}, for the sake of comparison.
 \begin{algorithm}[H]
 \caption{Stochastic frozen-flow Heun method for equation~\eqref{equation:Langevin}}
 \begin{algorithmic}\label{methode1}
 \STATE$H_n = \exp \left( h F(X_n) + \sqrt{2h} \xi^d_n E_d \right) X_n$,
 \STATE$X_{n+1} = \exp \left( \frac{h}{2} F(H_n) \right) \exp \left( \frac{h}{2} F(X_n) + \sqrt{2h} \xi^d_n E_d \right) X_n $.
 \end{algorithmic}
 \end{algorithm}

 \begin{algorithm}[H]
 \caption{2-steps frozen-flow Runge-Kutta method for equation~\eqref{equation:Langevin}}
 \begin{algorithmic}\label{methode2}
 \STATE$H_n = \exp \left( \frac{h}{4} F(X_n) + \frac{\sqrt{2h}}{2} \xi^d_n E_d \right) X_n$,
 \STATE$X_{n+1} = \exp \left( \frac{h}{6} F(X_n) + \frac{2h}{3} F(H_n) + \frac{\sqrt{2h}}{2} \xi^d_n E_d \right) \exp \left( \frac{h}{6} F(X_n) + \frac{\sqrt{2h}}{2} \xi^d_n E_d \right) X_n$.
 \end{algorithmic}
 \end{algorithm}

\begin{corollary}
 Methods~\ref{methode1} and~\ref{methode2} are of first order for the weak error and of second order for the invariant measure of equation~\eqref{equation:Langevin}.
\end{corollary}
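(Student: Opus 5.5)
The plan is to show that Methods~\ref{methode1} and~\ref{methode2} are instances of the general class~\eqref{equation:post-processed} with no post-processor (equivalently, $\overline{A_1}=0$ and $C_{\crochet{\mathcal{L}}{\overline{A_1}}}=0$). For each method we read off the coefficients $z^0_{i,k}, z^d_k, Z^0_{i,j,k}, Z^d_{i,k}$ and then check two sets of conditions. First, weak order one: this is just consistency, $A_1=\mathcal{L}$. Second, order two for the invariant measure: by Theorem~\ref{theorem:conditions2}, which relies on Theorem~\ref{theoreme:characterization}, it suffices to verify the five linear relations among the coefficients $a_w(\pi)$ of Table~\ref{table:tableau_Coefficient_general}.

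\textbf{Method~\ref{methode1}.} The method has $K=2$ exponentials in the final stage. In the order of application (index $k=1$ applied first), the coefficients are:
\begin{itemize}
\item first exponential: $z^0=\tfrac12$ on the stage $X_n$ and $z^d_1=\sqrt2$;
\item second exponential: $z^0=\tfrac12$ on the stage $H_n$ and $z^d_2=0$;
\item internal stage $H_n$: a single exponential with $Z^0=1$ on $X_n$ and $Z^d=\sqrt2$.
\end{itemize}
For $A_1$, the drift part gives $\sum_k z^0 = 1$. The noise part gives $\sum^!_{k_2\le k_1} z_{k_1}z_{k_2}$, where only $k_1=k_2=1$ contributes, so it equals $2/2!=1$. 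Hence $A_1=\mathcal{L}$.

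Next we evaluate the eleven entries of the table:
\begin{itemize}
\item $a_w(\forestA)=\tfrac12\cdot 1=\tfrac12$ and $a_w(\forestB)=\tfrac12\cdot 2/2=\tfrac12$, so the first condition holds;
\item $a_w(\forestF)=\tfrac12$, because only the configuration with $z^0$ in the later slot paired with the noise in slot 1 survives, with the factorial weights accounted for;
\item $a_w(\forestI)=4/4!\cdot(\ldots)$, i.e.\ all indices equal to 1 with weight $\tfrac{4}{24}\cdot 3$ after the $d_1,d_2$ pairing is accounted for in the table conventions.
\end{itemize}
The remaining entries are computed similarly, and we check the remaining four identities.

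\textbf{Method~\ref{methode2}.} The same procedure applies with
\begin{itemize}
\item $z^0$: $\tfrac16$ on $X_n$ in slot 1, and $\tfrac16$ on $X_n$ plus $\tfrac23$ on $H_n$ in slot 2;
\item $z^d_1=z^d_2=\sqrt2/2$;
\item internal stage $H_n$: $Z^0=\tfrac14$ and $Z^d=\sqrt2/2$.
\end{itemize}
The symmetric splitting of the noise across the two slots is designed so that the noise-only forests $\forestI,\forestJ,\forestK$ take the same values as for exact Brownian increments, and the drift weights $\tfrac16,\tfrac23$ then fix the conditions involving $\forestA,\forestB,\forestC$.

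\textbf{Weak order exactly one.} To justify that the weak order is one and not two, it suffices to exhibit one forest whose coefficient differs from the value $1/2!$ that $\tfrac12\mathcal{L}^2$ requires. A natural candidate is a forest mixing drift and noise across slots, such as $\forestG$ or $\forestD$. By the Proposition stating that $A_j=\tfrac{1}{j!}\mathcal{L}^j$ gives order $p$, the conditions of~\cite{Bronasco25hoi} then fail, so the method is not weak order two. Strictly speaking, only "at least first order weak" is needed, and that follows from consistency.

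\textbf{Main obstacle.} The expected difficulty is the careful bookkeeping of the factorial sums $\sum^!$ over ordered multi-indices and of the ordering conventions for the exponentials and the stage indices $i$. A convention slip changes values such as $a_w(\forestD)$ versus $a_w(\forestE)$, or $\forestG$ versus $\forestH$. To control this, I would first sanity-check the computed coefficients against the Euclidean Leimkuhler–Matthews case. There, the planar forests collapse to their unordered versions and the collapsed conditions must reduce to the known single Euclidean order-two condition for the invariant measure.
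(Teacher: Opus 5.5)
Your route is the one the paper relies on: read off the coefficients of Methods~\ref{methode1} and~\ref{methode2} in the class~\eqref{equation:post-processed}, get weak order one from consistency, and check the five identities of Theorem~\ref{theorem:conditions2} using Table~\ref{table:tableau_Coefficient_general}.

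\textbf{The gap.} That verification is the whole content of the corollary, and you do not carry it out. Three identities for Method~\ref{methode1} and all five for Method~\ref{methode2} are left as ``computed similarly''. The values you do report are also wrong.

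\emph{$\forestF$ versus $\forestH$.} In the sum $k_3\le k_2\le k_1$ defining $a_w(\forestF)$, the drift carries the smallest index $k_3$. This is because the first exponential applied gives the outermost derivatives: with frozen coefficients, $\varphi(\exp(V_2)\exp(V_1)x)=\sum\frac{1}{n_1!n_2!}(V_1^{n_1}V_2^{n_2}\vartriangleright\varphi)(x)$. Method~\ref{methode1} has noise only in slot $1$, so only $(1,1,1)$ survives and $a_w(\forestF)=\frac{1}{3!}\cdot\frac12\cdot 2=\frac16$. The configuration you describe (drift in slot $2$, noise in slot $1$) belongs to $\forestH$, which gives $a_w(\forestH)=\frac16+\frac12=\frac23$.

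\emph{Noise-only forests.} Similarly $a_w(\forestI)=\frac{1}{4!}(\sqrt2)^4=\frac16$, without the factor $3$. The three Gaussian pairings are split among the three distinct forests $\forestI,\forestJ,\forestK$, each receiving $\frac16$. Your two errors cancel in $a_w(\forestF)=a_w(\forestI)$, so that check proves nothing.

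\emph{Method~\ref{methode2}.} Your claim that the noise-only forests take their exact values is false. Both slots carry the same frozen field $\frac{\sqrt{2h}}{2}\xi^dE_d$, whose composition is the single flow $\exp(\sqrt{2h}\,\xi^dE_d)$. Hence $a_w(\forestI)=a_w(\forestJ)=a_w(\forestK)=\frac16$, whereas $\frac12\mathcal{L}^2$ requires $\frac12,0,0$.

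\textbf{Correct values.} In the order $\forestA,\ldots,\forestK$ of Table~\ref{table:tableau_Coefficient_general}, the coefficients are $(\frac12,\frac12,\frac12,0,1,\frac16,\frac16,\frac23,\frac16,\frac16,\frac16)$ for Method~\ref{methode1} and $(\frac16,\frac16,\frac12,\frac16,\frac12,\frac16,\frac13,\frac12,\frac16,\frac16,\frac16)$ for Method~\ref{methode2}. These can be confirmed by expanding $\varphi(X_1)$ directly as above. With them, all five identities hold:
\begin{itemize}
\item Method~\ref{methode1}: $\frac12=\frac12$, $\frac16=\frac16$, $0+\frac16=\frac16$, $\frac12+\frac23=1+\frac16$, and $\frac12+\frac16=0+\frac23$.
\item Method~\ref{methode2}: $\frac16=\frac16$, $\frac16=\frac16$, $\frac16+\frac16=\frac13$, $\frac16+\frac12=\frac12+\frac16$, and $\frac12+\frac16=\frac16+\frac12$.
\end{itemize}

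\textbf{Two smaller points.}
\begin{itemize}
\item Your argument for weak order \emph{exactly} one is invalid. The proposition with $A_j=\frac1{j!}\mathcal{L}^j$ is only a sufficient condition, and it concerns the invariant measure, so its failure shows nothing. Also, the target coefficients of $\frac12\mathcal{L}^2$ are not all $\frac12$: for instance $\forestD$ and $\forestG$ have target $0$. Drop that part, since ``at least one'' follows from consistency.
\item The Euclidean collapse gives three order-two conditions, not one (see the remark following Theorem~\ref{theorem:conditions2}).
\end{itemize}
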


\section{High order analysis for the invariant measure}\label{section:High order analysis for the invariant measure}
In this subsection, we generalize the analysis of~\cite{Debussche12wbe} to Riemannian manifolds and prove Theorem~\ref{theoreme:characterization} and Theorem~\ref{theorem:theorem_post}.

Let \( {(P_t)}_{t\geq0} \) be the semigroup associated to the Markov
process \({(X(t,x))}_{t\geq0,x\in\mathcal{M}}\), in other words,
\({(P_t)}_{t\geq0} \) gives to \(\psi\) a smooth function on \(\mathcal{M}\),
the unique solution \(u\) of the Kolmogorov equation~\eqref{equation:Kolmogorov}.
In the spirit of backard analysis~\cite{Debussche12wbe,Hairer06gni}, let us build a modified generator \(\tilde{\mathcal{L}_\tau} \) such that the solution of the problem
\[ \partial_t v(t,x) = \tilde{\mathcal{L}_\tau} v , \quad v(0,x) = \varphi(x), \]
at the time \(t = \tau\), coincides with the numerical flow, that is
\[ \exp \left( \tau \tilde{\mathcal{L}_\tau} \right) = \id + \underset{n \geq 1}{\sum} \tau^n A_n.\]
Thus the expansion yields
\[ \tilde{\mathcal{L}_\tau} = \underset{k = 0}{\overset{n}{\sum}} \tau^k L_k + R_n , \quad R_n = \underset{k \geq n+1}{{\sum}} \tau^k L_k . \]
Define \(\pi_n : P \mapsto n! {\left({\partial_\tau}^n P\right)}_{| \tau = 0} \), then
\begin{align*}
 A_n & = \pi_n \left( \exp \left( \tau \tilde{\mathcal{L}_\tau} \right) \right) = \underset{k \geq 0}{\sum} \frac{1}{k!} \pi_n \left( \tau^k \tilde{\mathcal{L}_\tau}^k \right) \\
 & = \underset{k = 1}{\overset{n}{\sum}} \frac{1}{k!} \pi_{n-k} \left( \tilde{\mathcal{L}_\tau}^k \right) = \underset{k = 1}{\overset{n}{\sum}} \frac{1}{k!} \pi_{n-k} \left( { \left( L_0 + \tau L_1 + \cdots + \tau^n L_n + R_n \right) }^k \right) \\
 & = \underset{k = 1}{\overset{n}{\sum}} \frac{1}{k!} \pi_{n-k} \left( \underset{j = 0}{\overset{nk}{\sum}} \tau^j \underset{i_1+ \cdots +i_k = j}{\sum} L_{i_1} \cdots L_{i_k} \right) \\
 & = \underset{k = 1}{\overset{n}{\sum}} \frac{1}{k!} \underset{i_1+ \cdots +i_k = n-k}{\sum} L_{i_1} \cdots L_{i_k} , 
\end{align*}
where we use \( \pi_p (R_n) = 0 \) when \(p \leq n\).
Thus \(\suite{L_n}{n}\) satisfies the recurrence relation
\[ L_0 = A_1 = \mathcal{L} , \quad L_n = A_{n+1} - \underset{k = 2}{\overset{n+1}{\sum}} \frac{1}{k!} \underset{i_1+ \cdots +i_k = n+1-k}{\sum} L_{i_1} \cdots L_{i_k}. \]
\begin{proposition}\label{proposition:4.1}
 Let \( \varphi \) be in \(C_P^\infty(\mathcal{M})\). Under Assumptions~\ref{assumption:H3} and~\ref{assumption:3.2}, there exists a sequence \( \suite{v_l}{l} \), such that for all \(n \geq 0\):
 \[ \partial_t v_n(t,x) - \mathcal{L} v_n(t,x) = \underset{l=1}{\overset{n}{\sum}} L_l v_{n-l}(t,x), \quad v_0(0,x) = \varphi(x), \quad v_n(0,x) \equiv 0 \text{ for } n > 0. \]
 Let \(v^N(t,x) =
 \underset{n=0}{\overset{N}{\sum}} \tau^n v_n(t,x) \), the we have
 \[ \norm{\espe{v^N(t,X_1)} -v^N(t+\tau,x) }_\infty \leq C_N \tau^{N+1} \underset{n=0,\dots N}{\underset{s \in (0,\tau)}{\sup}}\norm{v_n(t_j+s,x)}_{C^{4N+2}}. \]
\end{proposition}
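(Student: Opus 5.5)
Following the Euclidean backward-error argument of~\cite{Debussche12wbe}, I first build the sequence \(v_n\) recursively and then compare the numerical one-step expansion with the Taylor expansion of \(v^N\) in time.

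\textbf{Construction of the \(v_n\).} Set \(v_0(t,x)=u(t,x)=P_t\varphi(x)\). Suppose \(v_0,\dots,v_{n-1}\) have been built, each in \(C^\infty((0,\infty),C_P^\infty(\MM))\). Then \(v_n\) is defined by Duhamel's formula
\[ v_n(t,\cdot)=\int_0^t P_{t-s}\Big(\sum_{l=1}^n L_l v_{n-l}(s,\cdot)\Big)\deriv s . \]
This solves the inhomogeneous Kolmogorov equation with zero initial datum. Each \(L_l\) is a differential operator of finite order, at most \(2(l+1)\), since it is a polynomial in the \(A_j\) by the recurrence above. Hence the source term lies in \(C_P^\infty\). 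Assumptions~\ref{assumption:2.3} and~\ref{assumption:H3} then give regularity and polynomial growth of \(v_n\), with \(C^k\) norms bounded by \(P(t)\) times norms of \(\varphi\). Differentiating under the integral also gives \(\partial_t v_n\) in terms of \(\mathcal{L}\) and \(L_l\) applied to lower \(v\)'s.

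\textbf{Formal identity.} Writing \(\tilde{\mathcal L}^N_\tau=\sum_{l=0}^N\tau^lL_l\), the equations for the \(v_n\) say exactly that
\[ \partial_t v^N=\tilde{\mathcal{L}}^N_\tau v^N+\mathcal{O}(\tau^{N+1}), \]
where the remainder collects the products \(\tau^{l+m}L_lv_m\) with \(l+m>N\). Iterating in time gives
\[ \partial_t^k v^N = (\tilde{\mathcal{L}}^N_\tau)^k v^N+\mathcal{O}(\tau^{N+1}) . \]
I Taylor expand \(v^N(t+\tau,x)\) in \(\tau\) up to order \(N\), with an integral remainder controlled by \(\sup_{s\in(0,\tau)}\norm{\partial_t^{N+1}v^N(t+s)}\). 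This expresses the \(\tau^j\) coefficient as \(\pi_j\big(\exp(\tau\tilde{\mathcal L}_\tau)\big)v^N=A_jv^N\) up to terms of order \(\tau^{N+1}\), by the computation preceding the proposition. On the other side, Assumption~\ref{assumption:3.2} applied to \(x\mapsto v^N(t,x)\) gives
\[ \espe{v^N(t,X_1)}=v^N(t,x)+\sum_{j=1}^N\tau^jA_jv^N(t,x)+\tau^{N+1}R. \]
Here \(R\) is bounded by \(\abs{v^N(t)}_{C^{2N+2}}\). Subtracting the two expansions, the coefficients of \(\tau^0,\dots,\tau^N\) cancel by construction of the \(L_k\).

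\textbf{Main obstacle.} The difficulty is the remainder bookkeeping with the right number of derivatives. Each time derivative of \(v^N\) converts into applications of \(\mathcal{L}\) and \(L_l\), which are second-order and higher-order operators. Both the cross terms \(\tau^{j}\) with \(j>N\) and the Taylor remainder therefore involve spatial derivatives of the \(v_n\). Counting shows that orders up to about \(4N+2\) appear, namely \(2(N+1)\) from the TTT remainder plus those from the \(\partial_t\) conversion. The bound has to be uniform in \(x\), so I would first try the sup-norm with polynomial weights absorbed using the \(C_P^\infty\) estimates. If the weights cause trouble, I would instead use the moment bounds of Assumption~\ref{assumption:2.7} to control \(r(X_1)^\kappa\). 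This yields the stated estimate with \(\sup_{s,n}\norm{v_n(t+s)}_{C^{4N+2}}\).
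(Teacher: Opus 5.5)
Your proposal is correct and is essentially the paper's proof: the same Duhamel construction of the $v_n$, the same conversion of time derivatives into products of the $L_l$, a Taylor expansion in time whose $\tau^j$ coefficients are identified with $A_j$ through the recurrence defining the $L_l$, and a comparison with the Talay--Tubaro expansion applied to $v^N(t,\cdot)$. Your identity $\partial_t v^N=\tilde{\mathcal{L}}^N_\tau v^N+\mathcal{O}(\tau^{N+1})$ just packages, as a generating function, the paper's coefficientwise computation with $w_n(s,x)=v_n(t+s,x)$.

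The one point to fix is the derivative count you flagged. If you iterate that identity literally and only afterwards truncate $\sum_{k\le N}\tau^k(\tilde{\mathcal{L}}^N_\tau)^k/k!$, you create discarded terms such as $\tau^{(k+1)N}L_N^k v_N$, which have differential order $k(2N+2)$. To stay within $C^{4N+2}$, use instead the exact formula $\partial_t^k v_n=\sum_{l_1+\cdots+l_k+q=n}L_{l_1}\cdots L_{l_k}v_q$, which is the paper's formula for $\partial_s^m w_n$. With it, every discarded term and the Taylor remainder involve at most $2n+2(N+1)\le 4N+2$ derivatives.
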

\begin{proof}
 For \(N=0\), set \(v_0 :(t,x) \mapsto u(t,x) \).
 By induction, assume $v_0,\dots,v_N$ are defined and denote \(F_{N+1}(t,x) = \underset{l=1}{\overset{n+1}{\sum}} L_l v_{n+1-l}(t,x) \). Then the Duhamel formula ensures that
 \[ v_{N+1}(t,x) = \int_0^t P_{t-s} F_{N+1}(s,x)\deriv s. \]
 Assumption~\ref{assumption:H3} implies for all \( n \geq 0 \), \( v_n \) is
 smooth and
 \begin{equation}\label{equation:4.13} \norm{v_n(t)}_{C^k} \leq C \norm{\varphi}_{C^{k+4n}}. \end{equation}
 The case \(n=0\) is ensured by Assumption~\ref{assumption:H3}. Suppose that equation~\eqref{equation:4.13} is true for $0,\dots,n$. As \(L_l\) is of order \(2+2\), then for all \(l=1,\dots,n+1\),
 \[ \norm{L_l v_{n+1-l}(t)}_{C^k} \leq C \norm{v_{n+1-l}(t)}_{C^{k+2l+2}}\]
 and by induction \(\norm{v_{n+1-l}(t)}_{C^{k+2l+2}} \leq C
 \norm{\varphi}_{C^{(k+2l+2)+4(n+1-l)}}\), that is
 \[ \norm{F_{n+1}(t)}_{C^k} \leq C \norm{\varphi}_{C^{k+4(n+1)}}.\]
 Thus, from Assumption~\ref{assumption:H3}, we obtain
 \[\norm{P_{t-s} F_{n+1}(s)}_{C^{k}} \leq P_k(t-s) e^{-\lambda (t-s) } \norm{F_{n+1}(s)}_{C^{k}},\]
 and
 \[ \norm{v_{n+1}(t)}_{C^k} \leq C \norm{\varphi}_{C^{k+4(n+1)}}.\]
 Hence the proof by induction.

 Fix \(t \geq 0\) and set \(w_n : (s,x) \mapsto v_n(t+s,x)\). Then \(w_n\)
 satisfies
 \[ \partial_s w_n(s,x) - \mathcal{L} w_n(s,x) = \underset{l=1}{\overset{n}{\sum}} L_l w_{n-l}(s,x), \quad w_n(0,x) = v_n(t,x). \]
 Hence \(\partial_s w_n(s,x) = \underset{l=0}{\overset{n}{\sum}} L_l
 w_{n-l}(s,x)\),
 \[\partial_s^2 w_n(s,x) = \underset{l=0}{\overset{n}{\sum}} L_l \underset{k=0}{\overset{n-l}{\sum}} L_k w_{n-l-k}(s,x) = \underset{k=0}{\overset{n}{\sum}} \underset{l_1+l_2=k}{\sum} L_{l_1} L_{l_2} w_{n-k}(s,x), \]
 and by induction, it follows
 \[\partial_s^m w_n(s,x) = \underset{l_1+ \cdots +l_{m+1}=n}{\sum} L_{l_1} \dots L_{l_m} w_{l_{m+1}}(s,x). \]
 Noticing that \(L_l\) are differentials operators of order $2l+2>0$, the inegality holds
 \[ \norm{\partial_s^m w_n(s,x)}_\infty \leq C \underset{k=0,\dots,n}{\sup} \abs{w_k(s)}_{C^{2n+2m}}. \]
 A Taylor expansion yields, for the order $2N-n$,
 \begin{align*}
 w_n(\tau,x) = & \underset{m=0}{\overset{2N-n}{\sum}} \frac{\tau^m}{m!} \partial_s^m w_n(0,x) + \int_0^\tau \frac{{(\tau-s)}^{2N-n}}{(2N-n)!} \partial_s^{2N-n+1} w_n(s,x) \deriv s \\
 = & \underset{m=0}{\overset{2N-n}{\sum}} \frac{\tau^m}{m!} \underset{l_1+ \cdots +l_{m+1}=n}{\sum} L_{l_1} \dots L_{l_m} w_{l_{m+1}}(0,x) + R_{2N,n}(\tau,x),
 \end{align*}
 where \[ \norm{R_{2N,n}(\tau)}_\infty \leq C \tau^{2N-n+1} \underset{k=0,\dots 2N}{\underset{s \in (0,\tau)}{\sup}}\norm{w_k(s,x)}_{C^{4N+2}} \leq C \tau^{N+1}. \]
 On one hand, we have
 \begin{align*}
 v^N(t+\tau,x) = & \underset{n=0}{\overset{N}{\sum}} \underset{m=0}{\overset{2N-n}{\sum}} \frac{\tau^{m+n}}{m!} \underset{l_1+ \cdots +l_{m+1}=n}{\sum} L_{l_1} \dots L_{l_m} w_{l_{m+1}}(0,x) + R_{2N}(\tau,x) \\
 = & \underset{p=0}{\overset{2N}{\sum}} \tau^p \underset{m=0}{\overset{p}{\sum}} \frac{1}{m!} \underset{l_1+ \cdots +l_{m}=p-m-l_{m+1}}{\sum} L_{l_1} \dots L_{l_m} w_{l_{m+1}}(0,x) + R_{2N}(\tau,x) \\
 = & \underset{p=0}{\overset{2N}{\sum}} \tau^p \underset{q=0}{\overset{p}{\sum}} \left( \underset{m=0}{\overset{p-q}{\sum}} \frac{1}{m!} \underset{l_1+ \cdots +l_{m}=p-q-m}{\sum} L_{l_1} \dots L_{l_m} \right) w_{q}(0,x) + R_{2N}(\tau,x) \\
 = & \underset{p=0}{\overset{2N}{\sum}} \tau^p \underset{q=0}{\overset{p}{\sum}} A_{p-q} w_{q}(0,x) + R_{2N}(\tau,x).
 \end{align*}
 On the other hand, we have
 \begin{align*}
 \underset{n=0}{\overset{N}{\sum}} \tau^n A_n v^N(t+\tau,x) = & \underset{n=0}{\overset{N}{\sum}} \tau^n A_n \left( \underset{q=0}{\overset{N}{\sum}} \tau^q v_q(t+\tau,x) \right) \\
 = & \underset{n=0}{\overset{N}{\sum}} \underset{q=0}{\overset{N}{\sum}} \tau^{n+q} A_n v_q(t+\tau,x) \\
 = & \underset{q=0}{\overset{N}{\sum}} \underset{p=0}{\overset{N+q}{\sum}} \tau^{p} A_{p-q} v_q(t+\tau,x) \\
 = & \underset{p=0}{\overset{2N}{\sum}} \tau^p \underset{p-q\leq N}{\underset{q=0}{\overset{p}{\sum}}} A_{p-q} w_{q}(0,x) + R_{2N}(\tau,x).
 \end{align*}
 Thus, the modified flow satisfies
 \begin{align*}
 v^N(t+\tau,x) - \underset{n=0}{\overset{N}{\sum}} \tau^n A_n v^N(t+\tau,x) = & \underset{p=0}{\overset{2N}{\sum}} \tau^p \underset{p-q > N}{\underset{q=0}{\overset{p}{\sum}}} A_{p-q} w_{q}(0,x) + R_{2N}(\tau,x) \\
 = & \underset{p=N+1}{\overset{2N}{\sum}} \tau^p \underset{q=0}{\overset{p-(N+1)}{\sum}} A_{p-q} w_{q}(0,x) + R_{2N}(\tau,x),
 \end{align*}
 and
 \begin{align*}
 \abs{v^N(t+\tau,x) - \underset{n=0}{\overset{N}{\sum}} \tau^n A_n v^N(t+\tau,x)} & \leq C \tau^{N+1} \underset{n=0,\dots N}{\sup}\norm{v_n(t,x)}_{C^{4N}} \\
 & + C \tau^{N+1} \underset{k=0,\dots 2N}{\underset{s \in (0,\tau)}{\sup}}\norm{w_k(s,x)}_{C^{4N+2}} .
 \end{align*}
 Applying Assumption~\ref{assumption:2.6} with \(\varphi = v^N(t)\) gives
 \begin{align*}
 \norm{\espe{v^N(t,X_1)} -v^N(t+\tau,x) }_\infty \leq & \norm{\espe{v^N(t,X_1)} - \underset{n=0}{\overset{N}{\sum}} \tau^n A_n v^N(t+\tau,x) }_\infty \\
 + & \norm{\underset{n=0}{\overset{N}{\sum}} \tau^n A_n v^N(t+\tau,x) - v^N(t+\tau,x) }_\infty \\
 \leq & C_N \tau^{N+1} \underset{n=0,\dots N}{\underset{s \in (0,\tau)}{\sup}}\norm{v_n(t_j+s,x)}_{C^{4N+2}}.
 \end{align*}
 Hence the result.
\end{proof}
We construct a sequence of measure which weakly converges to the invariant
measure of the numerical method~\eqref{equation:method}.
\begin{lemma}\label{lemma:sequence}
 Under Assumptions~\ref{assumption:H1},~\ref{assumption:H2},~\ref{assumption:H3} and~\ref{assumption:2.6}, let \(\suite{v_n}{n}\) and \(\suite{v^{N}}{N}\) be defined as in Proposition~\ref{proposition:4.1}. There exists $\suite{\rho_n}{n}$ such that $\rho_0 = \rho_\infty$ and for all \(N \geq 1 \):
 \[ \inte{\mathcal{M}} \rho_N(y) \deriv \vol_\mathcal{M}(y) = 0, \quad \mathcal{L}^* \rho_{N} \deriv \vol_\mathcal{M} = \underset{n=1}{\overset{N}{\sum}} L_n^* \rho_{N-n} \deriv \vol_\mathcal{M}.\]
\end{lemma}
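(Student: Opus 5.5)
We construct the densities $\rho_N$ by induction on $N$, solving one linear adjoint equation at each step with Assumption~\ref{assumption:H2}. The base case is $\rho_0=\rho_\infty$. By Assumption~\ref{assumption:H1}, $\rho_\infty$ is the unique normalized solution of $\mathcal{L}^*\rho_\infty\,\deriv\vol_\MM=0$, so the relation with empty right-hand side holds for $N=0$.

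Now let $N\geq 1$ and assume $\rho_0,\dots,\rho_{N-1}$ have been built, with $\rho_n$ smooth of polynomial growth (in $C_P^\infty(\MM)$) and with zero total mass for $n\geq1$. Set
\[
g_N=\sum_{n=1}^N L_n^*\rho_{N-n}.
\]
The operators $L_n$ are differential operators with coefficients built from $F$, the frame $(E_d)$ and their derivatives. Under Assumption~\ref{assumption:2.1} these coefficients lie in $C_P^\infty$, so their formal $L^2(\deriv\vol_\MM)$-adjoints map $C_P^\infty(\MM)$ to itself. Hence $g_N\in C_P^\infty(\MM)$.

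The key point is that $g_N$ has zero mean. For each $n\geq1$,
\[
\inte{\MM} L_n^*\rho_{N-n}\,\deriv\vol_\MM=\inte{\MM}\rho_{N-n}\,(L_n 1)\,\deriv\vol_\MM ,
\]
and I would show $L_n 1=0$ for every $n\geq 1$. Consistency and Assumption~\ref{assumption:2.6} give $A_j1=0$ for all $j\geq1$, since $\espe{1}=1$ forces every coefficient in the expansion to vanish on constants. Also $L_0=\mathcal{L}$ annihilates constants. In the recurrence
\[
L_n=A_{n+1}-\sum_{k\geq2}\tfrac1{k!}\sum L_{i_1}\cdots L_{i_k},
\]
each product has rightmost factor $L_{i_k}$ with $i_k\leq n$, which kills constants by the induction hypothesis. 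Therefore $L_n1=0$ and $g_N$ has zero mean.

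The integration by parts needs enough decay. The factor $\rho_\infty$, which appears in $\rho_{N-n}$ (for instance the Gibbs density $e^{-V}$), should supply this decay; on a compact $\MM$ the issue disappears.

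With $g_N$ of zero mean, Assumption~\ref{assumption:H2} gives a unique $\mu\in C_P^\infty(\MM)$ with $\mathcal{L}^*\mu=g_N$ and $\inte{\MM}\mu\,\deriv\mu_\infty=0$. We cannot simply set $\rho_N=\mu$, because the lemma asks for $\inte{\MM}\rho_N\,\deriv\vol_\MM=0$, which is a different normalization. Instead we take
\[
\rho_N=\mu-\Big(\inte{\MM}\mu\,\deriv\vol_\MM\Big)\rho_\infty .
\]
Since $\mathcal{L}^*\rho_\infty=0$, this still satisfies $\mathcal{L}^*\rho_N=g_N$, and since $\rho_\infty$ has total mass one, $\rho_N$ has total mass zero. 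The correction requires $\mu$ to be integrable against $\deriv\vol_\MM$. If $\mathcal{M}$ is not compact, I would handle this by reading Assumption~\ref{assumption:H2} in the weighted sense, writing $\rho_N=\mu_N\rho_\infty$ so that all integrals are taken against $\deriv\mu_\infty$.

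This completes the induction. The main obstacle is the zero-mean (solvability) condition, in particular verifying $L_n1=0$ through the recurrence and justifying the integration by parts on a noncompact manifold. The remaining steps are direct applications of the assumptions.
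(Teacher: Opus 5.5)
Your proposal is correct and follows the paper's own route: induction on $N$, the solvability condition $\int_{\mathcal{M}} g_N \,\deriv\vol_\mathcal{M}=\sum_{n}\int_{\mathcal{M}}\rho_{N-n}\,L_n\mathds{1}\,\deriv\vol_\mathcal{M}=0$, and then Assumption~\ref{assumption:H2} to produce $\rho_N$. Two of your steps are more precise than the paper, which only invokes the order of $L_n$ and uses the $\deriv\mu_\infty$-normalized solution from Assumption~\ref{assumption:H2} as if it had zero $\deriv\vol_\mathcal{M}$-mass: your recurrence argument for $L_n\mathds{1}=0$ (there $k\geq 2$ forces $i_k\leq n-1$, which is what the strong induction needs, not merely $i_k\leq n$), and your shift $\rho_N=\mu-\bigl(\int_{\mathcal{M}}\mu\,\deriv\vol_\mathcal{M}\bigr)\rho_\infty$.
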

\begin{proof}
 Let us show by induction the existence of the sequence $\suite{\rho_n}{n}$. For $n=0$, set $\rho_0 = \rho_\infty$. Suppose having constructed $\rho_0,\dots,\rho_N$ that satisfy these conditions. Then, set $G_{N+1}= \underset{n=1}{\overset{N}{\sum}} L_n^* \rho_{N-n} \deriv \vol_\mathcal{M}$. The function $G_{N+1}$ is smooth and its integral is
 \[ \inte{\mathcal{M}} G_{N+1}(x) \deriv \vol_\mathcal{M}(x) = - \underset{n=1}{\overset{N}{\sum}} \inte{\mathcal{M}} \rho_{N-n}(x) L_n \mathds{1}(x) \deriv \vol_\mathcal{M}(x)=0,\]
 as \( L_n \) is a differential operator of order $2n+2$. By
 Assumption~\ref{assumption:H2}, there exists $\rho_{N+1} \in
 C_P^\infty(\mathcal{M})$ such that $ \mathcal{L}^* \rho_{N+1} \deriv \vol_\mathcal{M} = G_{N+1} $ and $
 \inte{\mathcal{M}} \rho_{N+1}(y) \deriv \mu_\infty (y) =0$, which
 concludes the induction.
\end{proof}
\begin{proposition}\label{proposition:5.2}
 Under Assumptions~\ref{assumption:H1},~\ref{assumption:H2},~\ref{assumption:H3}, and~\ref{assumption:2.6}, let \(\suite{v_n}{n}\) and \(\suite{F_n}{n}\) be defined as in Proposition~\ref{proposition:4.1}.
 For all \(n,k \geq 0\), there exists a polynomial \(P_{k,n}\) such that
 \[ \norm{v_n(t,x) - \inte{\mathcal{M}} \varphi(y) \rho_n(y) \deriv \vol_\mathcal{M}(y)} \leq P_{k,n}(t)e^{-\lambda t} \norm{\varphi - \moy{\varphi}}_{C^{k+4n}}. \]
\end{proposition}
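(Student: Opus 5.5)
We argue by induction on \(n\), in the spirit of~\cite{Debussche12wbe}. The idea is that \(v_n(t,\cdot)\) relaxes to the constant \(\int_\MM \varphi\,\rho_n\,\deriv\vol_\MM\) at the exponential rate given by Assumption~\ref{assumption:H3}, and that the densities \(\rho_n\) of Lemma~\ref{lemma:sequence} are exactly what is needed to identify this constant.

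For \(n=0\) we have \(v_0=u\) and \(\rho_0=\rho_\infty\). The estimate is then Assumption~\ref{assumption:H3}, applied to \(\varphi-\moy{\varphi}\). Since the semigroup preserves constants, this replaces \(\norm{\varphi}_{C^k}\) by \(\norm{\varphi-\moy{\varphi}}_{C^k}\).

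Now assume the estimate holds up to \(n-1\), and write \(c_l=\int_\MM \varphi\rho_l\,\deriv\vol_\MM\).

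\emph{Decomposition of the source.} Start from the Duhamel formula \(v_n(t)=\int_0^t P_{t-s}F_n(s)\deriv s\) with \(F_n=\sum_{l=1}^n L_l v_{n-l}\). Each \(L_l\) is a differential operator without zeroth-order term, so \(L_l\mathds{1}=0\). Hence \(L_l v_{n-l}(s)=L_l\bigl(v_{n-l}(s)-c_{n-l}\bigr)\). By the induction hypothesis, and since \(L_l\) has order at most \(2l+2\), we get
\[ \norm{F_n(s)}_{C^k}\le Q(s)e^{-\lambda s}\norm{\varphi-\moy{\varphi}}_{C^{k+4n}} \]
for some polynomial \(Q\). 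This is where the loss of derivatives is tracked: going from \(n-l\) to \(n\) costs \(2l+2\le 4l\) derivatives, which fits within \(4n\).

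\emph{Identification of the limit.} Split \(P_{t-s}F_n(s)=\bigl(P_{t-s}F_n(s)-\moy{F_n(s)}_{\mu_\infty}\bigr)+\moy{F_n(s)}_{\mu_\infty}\). Assumption~\ref{assumption:H3} bounds the first part in \(C^k\) by \(P_k(t-s)e^{-\lambda(t-s)}Q(s)e^{-\lambda s}\norm{\varphi-\moy{\varphi}}_{C^{k+4n}}\). Integrating over \(s\in(0,t)\) leaves a polynomial in \(t\) times \(e^{-\lambda t}\). It remains to show
\[ \int_0^t\moy{F_n(s)}_{\mu_\infty}\deriv s=c_n+O\bigl(P(t)e^{-\lambda t}\bigr). \]
First, \(\moy{F_n(s)}_{\mu_\infty}\) decays exponentially, so \(\int_0^t\) may be replaced by \(\int_0^\infty\) up to an error of the required size. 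Next, duality and Lemma~\ref{lemma:sequence} give
\[ \int_\MM L_l v_{n-l}\,\rho_0=\int_\MM v_{n-l}\,L_l^*\rho_0. \]
The combination \(\sum_l L_l^*\rho_{n-l}=\mathcal{L}^*\rho_n\) then allows a telescoping argument. Concretely, compute \(\frac{\deriv}{\deriv s}\sum_{j=0}^{n}\int_\MM v_j(s)\rho_{n-j}\,\deriv\vol_\MM\). Substituting the equations \(\partial_s v_j=\mathcal{L}v_j+\sum_l L_l v_{j-l}\) and \(\mathcal{L}^*\rho_m=\sum_l L_l^*\rho_{m-l}\), all terms should cancel, so this sum is conserved. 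At \(s=0\) it equals \(c_n\), since \(v_j(0)=0\) for \(j>0\). As \(s\to\infty\), the induction hypothesis and the zero mean of \(\rho_m\) for \(m\ge1\) identify the limit of the terms with \(j<n\), and what remains is the limit of \(\int_\MM v_n\rho_0\). Combined with the induction hypothesis, this shows that the constant \(\lim_{t\to\infty}\moy{v_n(t)}_{\mu_\infty}\) equals \(c_n\), with the remainder decaying exponentially.

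\emph{Main obstacle.} The hardest step is this bookkeeping of the conserved quantity: verifying the cancellation with the correct adjoint conventions (\(L^*\) taken with respect to \(\deriv\vol_\MM\) versus \(\mu_\infty\)), and justifying the integrations by parts on the noncompact \(\MM\). For the latter we would rely on the polynomial growth in \(C_P^\infty\), the moment bounds, and the smoothness granted by Assumption~\ref{assumption:H2}.
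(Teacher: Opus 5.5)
Your proposal is essentially the paper's proof. The sum $\sum_{j=0}^{n}\int_{\mathcal{M}} v_j(t)\rho_{n-j}\,\deriv\mathrm{vol}_{\mathcal{M}}$ is exactly the conserved quantity $c_n(t)$ of the paper's proof. Splitting the Duhamel integral into the $\mu_\infty$-mean of the source plus a mean-zero part controlled by Assumption~\ref{assumption:H3} is also the paper's decomposition.

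The paper applies the conservation law directly at finite $t$, through $\int_0^t\moy{F_n(s)}\deriv s=\moy{v_n(t)}=c_n-\sum_{j<n}\int_{\mathcal{M}}(v_j(t)-c_j)\rho_{n-j}\,\deriv\mathrm{vol}_{\mathcal{M}}$. The first equality, invariance of $\mu_\infty$ under $P_t$, is the link you leave implicit. Using this identity makes your detour through $\int_0^\infty$ and $t\to\infty$ unnecessary.

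One correction is needed in the step you flagged as the main obstacle. If you substitute $\mathcal{L}^*\rho_m=+\sum_{l=1}^{m}L_l^*\rho_{m-l}$, as you wrote and as displayed in Lemma~\ref{lemma:sequence}, the derivative of that sum is $2\sum_{j<n}\int_{\mathcal{M}}v_j\,\mathcal{L}^*\rho_{n-j}\,\deriv\mathrm{vol}_{\mathcal{M}}$, not zero. The cancellation requires $\sum_{r=0}^{m}L_r^*\rho_{m-r}=0$ with $L_0=\mathcal{L}$, i.e.\ $\mathcal{L}^*\rho_m=-\sum_{l=1}^{m}L_l^*\rho_{m-l}$. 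This is the relation the paper's proof actually uses, and it is what expanding $\tilde{\mathcal{L}_\tau}^*\rho^h=0$ in powers of $\tau$ gives. Once the $\rho_n$ are built with that sign, your argument goes through.
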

\begin{proof}
 For \(n=0\), \(v_0=u\) and \(\rho_0=\rho_\infty\) and by Assumption~\ref{assumption:H3}, for all integer \(k\), we have
 \[ \norm{u-\moy{\varphi}}_{C^k} \leq P_k(t)e^{-\lambda t} \norm{\varphi - \moy{\varphi}}_{C^k}. \]
 Suppose that the induction is true for \( 0 ,\dots, n \). Setting
 \[ {c_{n+1}(t) = \underset{m=0}{\overset{n+1}{\sum}} \inte{\mathcal{M}} v_{n+1-m}(t,x) \rho_m(x) \deriv \vol_\mathcal{M}(x)} , \quad \]
 then \(c_{n+1}\) is derivable and its derivative is
 \begin{align*}
 c_{n+1}'(t) = & \underset{m=0}{\overset{n+1}{\sum}} \inte{\mathcal{M}} \partial_t v_{n+1-m}(t,x) \rho_m(x) \deriv \vol_\mathcal{M}(x) \\
 = & \underset{m=0}{\overset{n+1}{\sum}} \inte{\mathcal{M}} \partial_t v_{m}(t,x) \rho_{n+1-m}(x) \deriv \vol_\mathcal{M}(x) \\
 = & \underset{m=0}{\overset{n+1}{\sum}} \inte{\mathcal{M}} \underset{l=0}{\overset{m}{\sum}} L_{m-l} v_l(t,x) \rho_{n+1-m}(x) \deriv \vol_\mathcal{M}(x) \\
 = & \underset{m=0}{\overset{n+1}{\sum}} \underset{l=0}{\overset{m}{\sum}} \inte{\mathcal{M}} v_l(t,x) L_{m-l}^* \rho_{n+1-m}(x) \deriv \vol_\mathcal{M}(x) \\
 = & \underset{l=0}{\overset{n+1}{\sum}} \inte{\mathcal{M}} v_l(t,x) \left( \underset{m=l}{\overset{n+1}{\sum}} L_{m-l}^* \rho_{n+1-m}(x) \right) \deriv \vol_\mathcal{M}(x) \\
 = & \underset{l=0}{\overset{n+1}{\sum}} \inte{\mathcal{M}} v_l(t,x) \left( \underset{r=0}{\overset{n+1-l}{\sum}} L_{r}^* \rho_{n+1-r-l}(x) \right) \deriv \vol_\mathcal{M}(x).
 \end{align*}
 Although, by definition of \( \rho_{n} \), we have \( \underset{r=0}{\overset{n+1-l}{\sum}} L_{r}^* \rho_{n+1-r-l} \deriv \vol_\mathcal{M} = 0 \), so \(c_{n+1}\) is constant and
 \[ \inte{\mathcal{M}} \partial_t v_{n+1}(t,x) \deriv \mu_\infty (x) = - \underset{m=1}{\overset{n+1}{\sum}} \inte{\mathcal{M}} \partial_t v_{n+1-m}(t,x) \rho_{m}(x) \deriv \vol_\mathcal{M}(x). \]
 In addition, the computation of the mean \(F_{n+1}\) with respect to the
 measure \(\rho_\infty\) ensures
 \begin{align*}
 \moy{F_{n+1}(t)} = & \inte{\mathcal{M}} F_{n+1}(t,x) \deriv \mu_\infty (x) \\
 = & \inte{\mathcal{M}} \left(\partial_t v_{n+1}(t,x) - \mathcal{L} v_{n+1}(t,x)\right) \deriv \mu_\infty (x) \\
 = & - \underset{m=1}{\overset{n+1}{\sum}} \inte{\mathcal{M}} \partial_t v_{n+1-m}(t,x) \rho_{m}(x) \deriv \vol_\mathcal{M}(x),
 \end{align*}
 by Proposition~\ref{proposition:4.1} and \(\mathcal{L}^* \deriv \mu_\infty = 0\). By computing the expression of \(v_{n+1}\), we find
 \begin{align*}
 v_{n+1}(t,x) = & \int_0^t \moy{F_{n+1}(s)} \deriv s + \int_0^t P_{t-s} \left( F_{n+1}(s,x) \moy{F_{n+1}(s)} \right) \deriv s \\
 = & - \underset{m=1}{\overset{n+1}{\sum}} \inte{\mathcal{M}} v_{n+1-m}(t,x) \rho_{m}(x) \deriv \vol_\mathcal{M}(x) + \inte{\mathcal{M}} \varphi(x) \rho_{n+1}(x)\deriv \vol_\mathcal{M}(x) \\
 + & \int_0^t P_{t-s} \left( F_{n+1}(s,x) \moy{F_{n+1}(s)} \right) \deriv s.
 \end{align*}
 We have
 \begin{align*}
 \underset{m=1}{\overset{n+1}{\sum}} & \inte{\mathcal{M}} v_{n+1-m}(t,x) \rho_{m}(x) \deriv \vol_\mathcal{M}(x) = \\
 & \underset{m=1}{\overset{n+1}{\sum}} \inte{\mathcal{M}} \left( v_{n+1-m}(t,x) - \inte{\mathcal{M}} \varphi(y) \rho_{n+1-m}(y) \deriv \vol_\mathcal{M}(y) \right) \rho_{m}(x) \deriv \vol_\mathcal{M}(x),
 \end{align*}
 as the mean of \(\rho_m\) is zero.
 Moreover, the inequality holds
 \begin{align*}
 \norm{F_{n+1}(s) - \moy{F_{n+1}(s)}}_{C^k} \leq & \underset{l=1}{\overset{n+1}{\sum}} \norm{L_l v_{n+1}(s) - \moy{L_l v_{n+1}(s)}}_{C^k} \\
 \leq & C_k \underset{l=1}{\overset{n+1}{\sum}} \norm{ v_{n+1-l}(s) - \moy{ v_{n+1-l}(s)}}_{C^{k+2l+2}} \\
 \leq & C_k \underset{l=0}{\overset{n}{\sum}} \norm{ v_{l}(s) - \moy{ v_{l}(s)}}_{C^{k+2(n+1-l)+2}},
 \end{align*}
 and, the proof is concluded by
 \begin{align*}
 \Vert v_{n+1}(t,x) & - \inte{\mathcal{M}} \varphi(x) \rho_{n+1}(x)\deriv \vol_\mathcal{M}(x) \Vert_{C^k} \\
 \leq & \underset{m=1}{\overset{n+1}{\sum}} \norm{v_{n+1-m}(t,x) - \inte{\mathcal{M}} \varphi(y) \rho_{n+1-m}(y) \deriv \vol_\mathcal{M}(y)}_{C^k} \inte{\mathcal{M}} \abs{\rho_{m}(x)} \deriv \vol_\mathcal{M}(x) \\
 + & \int_0^t p_k(t-s) e^{-\lambda (t-s)} \norm{F_{n+1}(s) - \moy{F_{n+1}(s)}}_{C^k} \deriv s \\
 \leq & P_{k,n+1}(t) e_{-\lambda t} \norm{\varphi - \moy{\varphi}}_{C^{4+4(n+1)}},
 \end{align*}
 by induction and Assumption~\ref{assumption:H3}.
\end{proof}
To be able to prove Theorem~\ref{theoreme:characterization}, we recall
a lemma from~\cite{Abdulle14hon}, that is the central result for the
proof of the theorem.
\begin{lemma}\label{lemma:second}
 Under the same assumptions of Lemma~\ref{lemma:sequence} and Proposition~\ref{proposition:5.2}, setting $ \rho_N^h = \underset{n=0}{\overset{N}{\sum}} h^n \rho_{n} $, there exists $C_N >0$ such that
 \[ \abs{ \inte{\mathcal{M}} \varphi(y) \rho^h(y) \deriv \vol_\mathcal{M}(y) - \inte{\mathcal{M}} \varphi(y) \rho_N^h(y) \deriv \vol_\mathcal{M}(y) } \leq C_N \norm{\varphi}_{C^{8N+10}} h^{N+1}.\]
 \begin{proof}
 We bound the difference between \(\espe{\varphi(X_p)}\) and the mean of \(\varphi\) with respect to the measure \(\rho_N^h\),
 \begin{align*}
 \espe{\varphi(X_p)} - & v^{N+1}(t_p,x) = \espe{v^{N+1}(0,X_p)} - v^{N+1}(t_p,x) \\
 = & \espe{\underset{j=0}{\overset{p-1}{\sum}} \espe{ v^{N+1}(t_j,X_{p-j}) - v^{N+1}(t_{j+1},X_{p-(j+1)}) \vert X_{p-(j+1)}} } \\
 = & \espe{\underset{j=0}{\overset{p-1}{\sum}} \espe{ v^{N+1}(t_j,X_1(X_{p-(j+1)})) - v^{N+1}(t_{j+1},X_{p-(j+1)}) \vert X_{p-(j+1)}} }.
 \end{align*}
 However, we recall Proposition~\ref{proposition:4.1},
 \[ \begin{array}{l}
 \espe{v^{N+1}(t_j,X_1(X_{p-(j+1)})) - v^{N+1}(t_{j+1},X_{p-(j+1)}) \vert X_{p-(j+1)}} \\
 \leq C_N \tau^{N+2} \underset{n=0,\dots N}{\underset{s \in (0,\tau)}{\sup}}\norm{v_n(t_j+s,x)}_{C^{4N+6}}.
 \end{array}\]
 It follows that
 \begin{align*}
 \espe{\varphi(X_p)} - v^{N+1}(t_p,x) \leq & C_N \tau^{N+2} \underset{j=0}{\overset{p-1}{\sum}} \underset{n=0,\dots N}{\underset{s \in (0,\tau)}{\sup}}\norm{v_n(t_j+s,x)}_{C^{4N+6}} \\
 \leq & C_N \tau^{N+2} \underset{j=0}{\overset{p-1}{\sum}} Q_N(t_j) e^{-\lambda t_j} \norm{\varphi}_{C^{8N+10}} \\
 \leq & C_N \tau^{N+1} \norm{\varphi}_{C^{8N+10}}.
 \end{align*}
 Since \( v^{N+1}(t_p,x) = v^{N}(t_p,x) + \tau^{N+1} v_{N+1}(t_p,x) \) and \( \norm{v_{N+1}(t_p,x)}_{C^0} \leq C_N \norm{\varphi}_{C^{4(N+1)}} \), we find
 \[ \abs{ \espe{\varphi(X_p)} - v^{N}(t_p,x) } \leq C_N \tau^{N+1} \norm{\varphi}_{C^{8N+10}}. \]
 Hence
 \begin{align*}
 & \abs{ \espe{\varphi(X_p)} - \inte{\mathcal{M}} \varphi(y) \rho_N^h(y) \deriv \vol_\mathcal{M}(y) } \\
 & \leq \abs{ \espe{\varphi(X_p)} - v^{N}(t_p,x) } + \abs{ v^{N}(t_p,x) - \inte{\mathcal{M}} \varphi(y) \rho_N^h(y) \deriv \vol_\mathcal{M}(y) } \\
 & \leq C_N \tau^{N+1} \norm{\varphi}_{C^{8N+10}} + \underset{n=0}{\overset{N}{\sum}} P_{0,N}(t_p) e^{-\lambda t_p} \norm{\varphi}_{C^{0+4n}} \\
 & \leq \left( C_N \tau^{N+1} + P_N(t_p) e^{-\lambda t_p} \right) \norm{\varphi}_{C^{8N+10}}.
 \end{align*}
 Hence the result by taking the limit in $p$.
 \end{proof}
\end{lemma}
With this analysis, we now prove the
Theorem~\ref{theoreme:characterization}.
\begin{proof}[Proof of Theorem~\ref{theoreme:characterization}]
 Under Assumptions~\ref{assumption:H2}, we set the sequence \(\suite{\rho_n}{n}\) defined in Lemma~\ref{lemma:sequence}.
 The error is defined by
 \[e(\varphi,h) = \underset{N \to \infty}{\lim} \frac{1}{N+1} \underset{n=0}{\overset{N}{\sum}}\varphi(X_n) - \inte{\mathcal{M}} \varphi(y) \deriv \mu_\infty (y) \]
 and since $\suite{X_n}{n}$ is ergodic, we have
 \[\underset{N \to \infty}{\lim} \frac{1}{N+1} \underset{n=0}{\overset{N}{\sum}} \varphi(X_n) = \inte{\mathcal{M}} \varphi(y) \rho^h(y) \deriv \vol_\mathcal{M}(y).\]
 Our aim is to show that
 \begin{align*} \inte{\mathcal{M}} \varphi(y) \rho^h(y) \deriv \vol_\mathcal{M}(y) = & \inte{\mathcal{M}} \varphi(y) \deriv \mu_\infty (y) \\ + & h^r \int_{0}^{\infty} \inte{\mathcal{M}} u(t,x) A_{r+1}^* \deriv \mu_\infty(x) + \mathcal{O}(h^{r+1}). \end{align*}
 Lemma~\ref{lemma:sequence} gives a sequence $\suite{\rho_n}{n}$ such that $\rho_0 = \rho_\infty$, and for all $N \geq 1$
 \[ \inte{\mathcal{M}} \rho_N(y) \deriv \vol_\mathcal{M}(y) = 0 \text{ and } \mathcal{L}^* \deriv \mu_N = \underset{l=1}{\overset{N}{\sum}} L_l^* \deriv \mu_{N-l}. \]
 Moreover, we have
 \[\abs{ \inte{\mathcal{M}} \varphi(y) \rho^h(y) \deriv \vol_\mathcal{M}(y) - \inte{\mathcal{M}} \varphi(y) \rho_N^h(y) \deriv \vol_\mathcal{M}(y) } \leq C h^{N+1}.\]
 For all $n=1,\dots,r-1$, $ \mathcal{L}^*\deriv \mu_n = A_n^* \deriv \mu_\infty = L_n^*
 \deriv \mu_\infty = 0 $ and $ \rho_n = 0 $ stands by induction. Thus the inequality
 of Lemma~\ref{lemma:second} ensures that
 \[ \abs{ \inte{\mathcal{M}} \varphi(y) \rho^h(y) \deriv \vol_\mathcal{M}(y) - \inte{\mathcal{M}} \varphi(y) \rho_r^h(y) \deriv \vol_\mathcal{M}(y) } \leq C_r \norm{\varphi}_{C^{8r+10}} h^{r+1}.\]
 However \(\rho_r^h(y) = \rho_\infty + h^r \rho_r\) and
 \[ \abs{ e(\varphi,h) - h^r \inte{\mathcal{M}} \varphi(y) \rho_r(y) \deriv \vol_\mathcal{M}(y) } \leq C_r \norm{\varphi}_{C^{8r+10}} h^{r+1}. \]
 The integration of equation~\eqref{equation:Kolmogorov}, ensures $u(t,x) =
 \varphi(x) + \int_0^t \mathcal{L} u (s,x) \deriv s$ and by ergodicity
 \[ \underset{t \to \infty}{\lim}u(t,x) = \inte{\mathcal{M}} \varphi(y) \deriv \mu_\infty (y). \]
 So by using that the mean of $\rho_r$ is zero and that, by ergodicity and equation~\eqref{equation:lim_u},
 \[ \varphi(x) = \inte{\mathcal{M}} \varphi(y) \deriv \mu_\infty (y) - \int_0^\infty \mathcal{L}u(s,x) \deriv s, \]
 we obtain
 \begin{align*}
 \inte{\mathcal{M}} \varphi(x) \rho_r(x) \deriv \vol_\mathcal{M}(x) = & - \inte{\mathcal{M}} \int_0^\infty \mathcal{L}u(s,x) \rho_r(x) \deriv s \deriv \vol_\mathcal{M}(x) \\
 = & - \int_0^\infty \inte{\mathcal{M}} u(s,x) \mathcal{L}^*\deriv \mu_r(x) \deriv \vol_\mathcal{M}(x) \deriv s \\
 = & \int_0^\infty \inte{\mathcal{M}} u(s,x) A_{r+1}^*\deriv \mu_\infty(x) \deriv s,
 \end{align*}
 where $\mathcal{L}^*\deriv \mu_r = A_{r+1}^* \deriv \mu_\infty$ as in the induction.
\end{proof}
We conclude this section by the proof of Theorem~\ref{theorem:theorem_post}.
\begin{proof}[Proof of Theorem~\ref{theorem:theorem_post}]
 Theorem~\ref{theorem:theorem_post} is proven by applying Theorem~\ref{theoreme:characterization} with \(\varPhi = \varphi \circ G\) where \(\varphi\) is a test function and \(G\) is the post-processor. First, we compute the term of error of the composition.
 \begin{align*}
 \inte{\MM} \varPhi(y) \rho_\infty(y) \deriv \vol_\MM (y) = & \inte{\MM} \varphi(y) \rho_\infty(y) \deriv \vol_\mathcal{M}(y) + \underset{j=1}{\overset{p-1}{\sum}} h^j \inte{\MM} \overline{A_j} \varphi(y) \rho_\infty(y) \deriv \vol_\mathcal{M}(y) \\
 + & h^p \inte{\MM} \overline{A_p} \varphi(y) \rho_\infty(y) \deriv \vol_\mathcal{M}(y) + \mathcal{O}(h^{p+1}),
 \end{align*}
 however by Assumption~\ref{assumption:H1}, the sum is zero.
 Hence by equation~\eqref{equation:lim_u}, it follows
 \begin{align*}
 \inte{\MM} \overline{A}_p \varphi(y) \deriv \mu_\infty (y) = & \left( \inte{\MM} \left( \inte{\MM} \varphi(z) \deriv \mu_\infty (z) \right) \overline{A}_p^*  \deriv \mu_\infty(y)  \right) \\
 & - \inte{\mathcal{M}} \left( \int_{0}^{\infty} \mathcal{L} u(y,t) \deriv t \right) \overline{A}_p^*  \deriv \mu_\infty(y)  \\
 = & - \int_{0}^{\infty} \inte{\mathcal{M}} u(y,t) \left(-\crochet{\mathcal{L}}{\overline{A}_p}^*\right) \deriv \mu_\infty (y) \deriv t \\
 = & - \int_{0}^{\infty} \inte{\mathcal{M}} u(y,t) A_{p+1}^*  \deriv \mu_\infty(y)  \deriv t
 \end{align*}
 where hypotheses \({(A_{p+1}+[\mathcal{L},\overline{A_p}])}^*\deriv \mu_\infty=0\) and \(\mathcal{L}^*\deriv \mu_\infty = 0\) are used. We remark that is it the term of order \(p\) in the error of Theorem~\ref{theoreme:characterization}. Now, Theorem~\ref{theoreme:characterization} can be applied with \(\varPhi = \varphi \circ G\) and a computation shows that
 \begin{align*}
 \underset{N \to \infty}{\lim} \frac{1}{N+1} \underset{n=0}{\overset{N}{\sum}} \varphi(\overline{X}_n) - \inte{\mathcal{M}} \varphi(y) \deriv \mu_\infty (y) = & \mathcal{O}(h^{p+1}).
 \end{align*}
 Thus \(\suite{\overline{X}_n}{n}\) is of order \(p+1\).
\end{proof}
\section{Intrinsic order conditions for the invariant measure}\label{Section:Intrinsic order conditions with planar exotic forests}
In this section, we use the algebraic framework of exotic Lie-Butcher series introduced in~\cite{Bronasco25hoi} to derive the order conditions for sampling the invariant measure. After recalling the notion of planar exotic forests, we present a new operation \(\IBP\) on forests, derived from the Riemannian integration by parts, to express the adjoint of the operators \(A_n\) and generalise~\cite{Laurent20eab, Bronasco22ebs}.
The approach is successfully applied through Theorem~\ref{theoreme:characterization} and Theorem~\ref{theorem:theorem_post} to derive the conditions of Theorem~\ref{theorem:conditions2}. We then discuss the form of the exotic Lie-Butcher series describing the adjoint of the operators \(A_n\) with irreducible forests.

\subsection{Planar exotic forests}\label{sub:Exotic forests Algebra}
We recall the structure of planar exotic forests, in the spirit
of~\cite{Bronasco25hoi} which generalize standard B-series~\cite{Butcher72aat, Hairer06gni, Butcher21bsa} to SDEs on manifolds.
\begin{definition}
 A decorated tree \((\pi,\alpha)\) with the decoration set \(D\) is a connected oriented graph \(\pi = (V,E)\) with the vertices $V$ and the edges $E\subset V\times V$, in which each node has exactly one outgoing edge except for one node, called the root, that has none. The map \(\alpha : V \to D\) decorates each vertex. A decorated tree is called planar if for all node \(v \in V\), the set \(\Pi(v)\) of predecessors of \(v\) is ordered. An ordered, possibly empty, list of planar decorated trees is called a planar decorated forest.

 An exotic forest is a planar decorated forest with the decorations \(D =
 \{\bullet\} \cup \mathds{N}\), \(\mathds{N} = \{1,2,\ldots\}\) which follows
 the following rules: if an integer is used as decoration then it must decorate
 exactly two leaves. Two exotic trees \((\pi_1,\alpha_1)\) and
 \((\pi_2,\alpha_2)\) are identical if \(\pi_1=\pi_2\) and if there exists an
 application \(\psi : D \mapsto D\) such that \(\psi(\bullet) = \bullet\),
 \(\restr{\psi}{\mathds{N}}\) induces an automorphism of \(\mathds{N}\) and
 \(\alpha_1 = \psi \circ \alpha_2\). Define \(V_{\forestL}\) as the set of
 nodes of \(\pi\) decorated by \(\forestL\) and \(V_L = \{ \{v,w\} \subset V | \alpha_w(v) = \alpha_w(w) \} \) as the set of pairly decorated nodes, that we call lianas.
 The set of exotic forests is \(\EF\), the associated vector space is \(\EFv=\Span_\R(\EF)\).

 The order of a forest \(\pi\in \EF\) is defined by
 \[\abs{\pi} =\abs{ V_{\forestL} } + \abs{V_L}. \]
 We denote \(\EFv_{k}\) the vector space spanned by exotic forests of order equal to \(k\), and \(\EFv_{\leq k}\) the vector space spanned by exotic forests of order lower than or equal to \(k\).
\end{definition}
\begin{ex}
 The forests of order 1 are $\forestL$ and $\forestM$. The space \(\EFv_{2}\) is generated by the following 11 forests:
 \[ \forestA, \quad \forestB, \quad \forestC, \quad \forestD, \quad \forestE, \quad \forestF, \quad \forestG, \quad \forestH, \quad \forestI, \quad \forestJ, \quad \forestK.\]
 There are 95 forests of order 3.
\end{ex}
\begin{definition}
 Let the map \(\mathds{F}^F : \EFv \to T(\mathcal{\mathfrak{X}}_P(\MM)) \) be defined for \(\pi \in \EF\) and \(\varphi\in C_P^\infty(\MM)\) by
 \[ \mathds{F}^F (\pi) \vartriangleright \varphi = \underset{v \in V}{\underset{i_v = 1}{\overset{D}{\sum}}} \left( \underset{\{v,w\} \in V_L}{\prod} \delta_{i_v=i_w} \right) \left( \underset{v \in V_{\forestL}}{\prod} E_{I_{\Pi(v)}} \left[ f^{i_v} \right] \right) E_{I_{R}} [\varphi], \]
 where \(R\) is the set of roots of \(\pi\), \(\Pi(v)\) is the set of
 predecessors of \(v\), both ordered from right to left, \(i_v\) is the indice
 associated to the node \(v\) then and \(E_I[g] = E_{i_p}[\ldots E_{i_1}[g]
 \ldots]\) if the set of indices is \(I = (i_p,\ldots,i_1)\). If \(\{v,w\}\) is a liana,
 i.e.\ts, they share the same integer as decoration, then the Kronecker symbol $\delta_{i_v=i_w}$ ensures that they share the same indice, i.e.\ts, \(i_v=i_w\).
\end{definition}
 From~\cite{Bronasco25hoi}, the operator \(\mathds{F}^F: (\EFv,\cdot) \to (T(\mathcal{\mathfrak{X}}_P(\MM)),\cdot) \) is a Hopf algebra morphism. In particular, $\mathds{F}^F$ satisfies
 \[
 \mathds{F}^F(\pi_1\cdot \pi_2)=\mathds{F}^F(\pi_1)\cdot \mathds{F}^F(\pi_2).
 \]
\begin{definition}\label{definition:Sseries}
    Given a one form \(a \in \EF^*\), called a coefficient map, an exotic Lie S-series is the following formal power series in \(h\) in \(T(\mathcal{\mathfrak{X}}_P(\MM))\),
    \[S_h(a) \vartriangleright \varphi = \underset{\pi \in \EF}{\sum} h^{\abs{\pi}} a(\pi) \mathds{F}^F(\pi) \vartriangleright \varphi.\] 
    \end{definition}
    In the following, we denote by \(a_w\) the coefficient map of the numerical flow, that is the unique coefficient map \(a_w : \EFv \to \mathds{R}\) such that
    \[ \espe{\varphi(X_1)\vert X_0 = x} = S_h(a_w) \vartriangleright \varphi .\]

\subsection{Integration by parts for the invariant measure}\label{sub:Integration by parts for the invariant measure}

The derivation of the order conditions for the invariant measure is performed using multiple integration by parts of the Taylor-Talay-Tubaro expansion against the invariant measure.
In this context, the use of an orthonormal frame proves crucial to ensure that the operations rewrite with planar exotic forests.
\begin{proposition}\label{proposition:integrationbypart}
 For \(\varphi,\psi \in C_P^\infty(\MM)\), the following integration by parts with respect to the Gibbs measure~\eqref{equation:Gibbs} holds:
 \[ \inte{\MM} E_i[\varphi] \psi \deriv \mu_\infty = - \inte{\MM} \varphi E_i[\psi] \deriv \mu_\infty - \inte{\MM} \varphi \psi f^i \deriv \mu_\infty, \]
 where \(F = f^j E_j\) is given by~\eqref{equation:correction}.
\end{proposition}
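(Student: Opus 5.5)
The plan is to reduce the identity to the Riemannian divergence theorem applied to the vector field $E_i$, after absorbing the Gibbs density into the test function. Writing $\deriv \mu_\infty = Z^{-1} e^{-V}\deriv \vol_\MM$ as in~\eqref{equation:Gibbs}, I would apply, for a smooth compactly supported function $g$, the identity
\[ \inte{\MM} E_i[g] \deriv \vol_\MM = - \inte{\MM} g \,\mathrm{div}(E_i) \deriv \vol_\MM \]
with $g = \varphi\psi e^{-V}$. The Leibniz rule gives
\[ E_i[\varphi\psi e^{-V}] = \bigl(E_i[\varphi]\psi + \varphi E_i[\psi] - \varphi\psi E_i[V]\bigr)e^{-V}. \]
Rearranging then yields
\[ \inte{\MM} E_i[\varphi]\psi \deriv\mu_\infty = -\inte{\MM} \varphi E_i[\psi]\deriv\mu_\infty + \inte{\MM}\varphi\psi\bigl(E_i[V] - \mathrm{div}(E_i)\bigr)\deriv\mu_\infty. \]

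The key geometric step is to compute $\mathrm{div}(E_i)$ in the orthonormal frame. Since $\nabla$ is the Levi-Civita connection, $\mathrm{div}(E_i) = \sum_n \ps{\nabla_{E_n}E_i}{E_n}$. Metric compatibility together with $\ps{E_i}{E_n} = \delta_{in}$ being constant gives $\ps{\nabla_{E_n}E_i}{E_n} = -\ps{E_i}{\nabla_{E_n}E_n}$, so $\mathrm{div}(E_i) = -\ps{\nabla_{E_n}E_n}{E_i}$ (with summation over $n$). Substituting into the last term, the bracket becomes $E_i[V] + \ps{\nabla_{E_n}E_n}{E_i}$, which equals $-f^i$ by~\eqref{equation:correction}. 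This gives exactly the claimed formula.

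The main obstacle is the noncompact case: $\varphi,\psi\in C_P^\infty(\MM)$ only have polynomial growth, so the divergence theorem cannot be applied to them directly. I would introduce cutoffs $\chi_R = \chi(r/R)$, where $\chi$ is smooth, equal to $1$ on $[0,1]$ and $0$ on $[2,\infty)$. Since $r$ is $1$-Lipschitz, $|\deriv\chi_R|\le C/R$ almost everywhere, and I would smooth $r$ near the cut locus if necessary. I would then apply the argument to $\chi_R\varphi$. The extra term $\int E_i[\chi_R]\varphi\psi\deriv\mu_\infty$ is bounded by $(C/R)\sup|E_i|\int|\varphi\psi|\deriv\mu_\infty$, which tends to $0$ because the $E_i$ are bounded (Assumption~\ref{assumption:2.1}). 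All remaining integrands have polynomial growth in $r$, using $f^i\in C_P^\infty$ and the boundedness of $\nabla_{E_n}E_n$. I would assume, as is implicit in Assumptions~\ref{assumption:H1}--\ref{assumption:H3}, that $\mu_\infty$ has finite polynomial moments. Dominated convergence as $R\to\infty$ then concludes the proof. On compact manifolds such as $\mathds{S}^n$ or $SO_q(\R)$, this step is unnecessary.
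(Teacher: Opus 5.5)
Your proof is correct and is essentially the paper's argument: the divergence theorem is applied to $\varphi\psi e^{-V}E_i$ (the paper takes $X=\psi\rho_\infty E_i$ against $\varphi$), followed by the identity $\mathrm{div}(E_i) = -g(\nabla_{E_n}E_n,E_i)$ in the orthonormal frame. You get that identity in one line from metric compatibility, where the paper goes through the Koszul formula, and your cutoff-and-moment argument supplies the justification on noncompact $\mathcal{M}$ that the paper leaves implicit.
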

\begin{proof}
 We recall the general integration by part's formula~\cite{ONeill83srg, Lee19irm}, for \(X = x^i E_i \in \mathfrak{X}_P(\MM)\),
 \[\inte{\mathcal{M}} X[\varphi] \deriv \vol_\mathcal{M} = - \inte{\mathcal{M}} \varphi \Div(X) \deriv \vol_\mathcal{M} = - \inte{\mathcal{M}} \varphi \left( x^i \Div(E_i) + E_i [x^i] \right) \deriv \vol_\mathcal{M}.\]
 Then, by applying the formula to the invariant measure with \(X = \psi \rho_\infty E_i\), we have
 \begin{align*}
 \inte{\mathcal{M}} E_i[\varphi] \psi \deriv \mu_\infty = & - \inte{\mathcal{M}} \varphi \left( \psi \Div(E_i) + E_i[\psi] - \psi E_i[V] \right) \deriv \mu_\infty \\
 = & - \inte{\mathcal{M}} \varphi E_i[\psi] \deriv \mu_\infty - \inte{\mathcal{M}} \varphi \psi \left(\Div(E_i) - E_i[V]\right) \deriv \mu_\infty.
 \end{align*}
 Proposition~\ref{proposition:integrationbypart} holds if \(\Div(E_i) - E_i[V] = f^i\).
 Recall the correction term~\eqref{equation:correction} that is \(f^i = -E_i[V] -g(\nabla_{E_d}E_d , E_i)\) in the orthonormal basis \(\{E_1,\dots,E_D\}\). Hence, Proposition~\ref{proposition:integrationbypart} holds if \(\Div (E_i) = - g(\nabla_{E_d}E_d , E_i)\). However, by definition of the operator \(\Div(H) = g(\nabla_{E_d}H , E_p)\) for all \(H\in \mathfrak{X}_P(\MM)\), the Koszul formula~\cite{ONeill83srg,Lee19irm} ensures that
 \begin{align*}
 2 g(\nabla_{E_d} E_i, E_d) = & E_d \bigl(g(E_i,E_d)\bigr) + E_i \bigl(g(E_d,E_d)\bigr) - E_d \bigl(g(E_d,E_i)\bigr) \\
 + & g([E_d,E_i],E_d) - g([E_i,E_d], E_d) - g([E_d,E_d], E_i) \\
 = & 2 g([E_d,E_i],E_d).
 \end{align*}
Similarly, one finds \(2 g(\nabla_{E_d}E_d , E_i) = - 2 g([E_d,E_i],E_d) \). Hence the result.
\end{proof}
\begin{ex}
 Proposition~\ref{proposition:integrationbypart} yields for \(\varphi = E_d[\phi]\) and \(\psi \equiv 1\).
 \[ \inte{\mathcal{M}} ((E_d \cdot E_d) \vartriangleright \phi) \deriv \mu_\infty = - \inte{\mathcal{M}} f^d (E_d \vartriangleright \phi) \deriv \mu_\infty. \]
 We thus recover \[\mathcal{L}^* \deriv \mu_\infty = {\left( f^d E_d + (E_d \cdot E_d) \right)}^* \deriv \mu_\infty = 0.\]
\end{ex}
\begin{remark}
In general, Proposition~\ref{proposition:integrationbypart} does not yield elementary differentials that can be represented as exotic forests. For \(\psi = f^i\), the terms \(E_i[f^i]\) and $f^i f^i$ appear, which cannot be represented by exotic forests.
 Such terms are called aromas and yield the larger space of exotic aromatic forests in the Euclidean case~\cite{Laurent21ocf, Laurent23tue, Bronasco22cef}. The use of aromas is out of the scope of the present paper.
\end{remark}

Let us now identify the exotic forests whose integration by part does not write with on exotic forests.
\begin{definition}
 The set of irreducible forests \( \Irred\) is the set of exotic forest whose first tree is not a numbered node.
 If \( \pi \notin \Irred \) and its first root is numbered by \( \forestN \), we write \(\pi = \forestN \tilde{\pi}\) where \(\tilde{\pi}\) is a decorated forest.
 We denote \( \Irredv = \Span_\R (\Irred)\).
\end{definition}

The integration by parts of planar exotic forests is defined as the following, which generalises the non-planar map in~\cite{Bronasco22ebs}.
\begin{definition}\label{definition:IBP}
 Define the linear operator \(\IBP : \EFv \to \EFv\) by \(\restr{\IBP}{\Irred} \equiv 0 \) and else
 \begin{equation}
 \label{equation:IBP_def}
 \IBP(\pi ) = - \restr{\tilde{\pi}}{\forestN \to \forestL} - \underset{v \in V_{\forestL}}{\sum} \restr{\tilde{\pi}}{\forestN \curvearrowright v},
 \end{equation}
 where \(\restr{\tilde{\pi}}{\forestN \to \forestL}\) is the exotic
 forest obtained by substituting the unique node \( \forestN \) of \(\tilde{\pi}\)
 by \( \forestL \) and \(\restr{\tilde{\pi}}{\forestN \curvearrowright v}\)
 is the exotic forest obtained by left-grafting a node \( \forestN \) on the node \(v\) of \(\tilde{\pi}\).
\end{definition}

\begin{remark}
It is well known that Rota-Baxter algebras~\cite{Baxter60aap} are linked to the representation of integration by parts operations.
The IBP operation does not exactly yield a Rota-Baxter structure thanks to the perturbation $\restr{\tilde{\pi}}{\forestN \to \forestL}$ in equation~\eqref{equation:IBP_def}.
\end{remark}

\begin{exs}
 A direct application of Definition~\ref{definition:IBP} yields:
 \begin{align*}
 & \IBP(\forestL + \forestM) = - \forestL, \quad \IBP(\forestE ) = -(\forestB + \forestA), \quad \IBP(\forestK) = - \forestH, \\
 & \IBP(\forestAA) = -(\forestAB+\forestAC+\forestAD).
 \end{align*}
\end{exs}

We now derive an explicit expression of the adjoint operator \(A_j^*\) with irreducible forests. Since \(\IBP\) removes only the first numbered root, we iterate \(\IBP\) until the output is a linear combination of irreducible forests.
\begin{definition}\label{definition:RED}
The linear reduction operator \(\RED\) on \(\EFv\) is given by the following limit of stationary sequence:
 \[ \RED = \underset{n \to \infty}{\lim} {\left( \id_{\Irredv} + \IBP \right)}^n.\]
\end{definition}
\begin{exs}
A direct application of Definition~\ref{definition:RED} yields:
 \begin{align*}
 & \RED(\forestL + \forestM) = 0, \quad \RED(\forestE ) = -(\forestB + \forestA), \quad \RED(\forestK) = \forestC - \forestB - \forestA, \\
 & \RED(\forestAA) = \forestAE + \forestAF + \forestAG + \forestAH + \forestAI + \forestAJ + \forestAK + \forestAL + \forestAM.
 \end{align*}
\end{exs}
\begin{definition}
 Two exotic forests \(\pi_1, \pi_2 \in \EFv\) are equivalent, written \(\pi_1 \sim \pi_2\), if the associated differential operators satisfy
 \[ \mathds{F}^F {(\pi_1)}^*\deriv \mu_\infty = \mathds{F}^F {(\pi_2)}^*\deriv \mu_\infty. \]
\end{definition}
\begin{proposition}\label{proposition:equivalence}
 For all \(\pi \in \EFv\), \(\pi\) and \(\RED(\pi)\) represent the same adjoint operator, that is \[ \pi \sim \RED(\pi) .\]
\end{proposition}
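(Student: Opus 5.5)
The plan is to reduce the statement to one step: for every non-irreducible forest $\pi=\forestN\tilde{\pi}$ we have $\pi\sim\IBP(\pi)$. Since $\sim$ is linear (the maps $\pi\mapsto\mathds{F}^F(\pi)^*\deriv\mu_\infty$ are linear), and $\id_{\Irredv}+\IBP$ acts as the identity on $\Irred$ and as $\IBP$ on non-irreducible forests, this gives $\pi\sim(\id_{\Irredv}+\IBP)(\pi)$ for every $\pi\in\EFv$. Iterating, $\pi\sim(\id_{\Irredv}+\IBP)^n(\pi)$ for all $n$. Passing to the stationary limit then yields $\pi\sim\RED(\pi)$.

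For the limit to make sense, I would first check that the sequence is indeed stationary. Each application of $\IBP$ to a forest $\forestN\tilde{\pi}$ removes one numbered root. It also turns one liana into either a $\forestL$ node or a grafted numbered leaf, so the order is preserved. In the grafting case, the remaining numbered leaf is no longer a root. Hence the number of lianas both of whose endpoints are roots strictly decreases. Since the order is fixed and the space of forests of fixed order is finite-dimensional, after finitely many steps only irreducible forests remain, and those are fixed.

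The core computation is the key step. I would write $\pi=\forestN\tilde{\pi}$, so that
\[
\mathds{F}^F(\pi)\vartriangleright\varphi=\sum_i E_i\bigl[\,\Phi_i(\varphi)\bigr],
\]
where $\Phi_i(\varphi)$ is the elementary differential of $\tilde{\pi}$ with the partner numbered node carrying the free index $i$. This uses that the first root in the right-to-left convention is the outermost derivative; I would check this against the definition of $\mathds{F}^F$ and $E_{I_R}$. Testing against an arbitrary $\psi\in C_P^\infty$ with $\deriv\mu_\infty$, and applying Proposition~\ref{proposition:integrationbypart} with the roles arranged so that $E_i$ falls on $\psi\,\Phi_i$, gives
\[
\int \psi\,E_i[\Phi_i]\,\deriv\mu_\infty=-\int E_i[\psi]\,\Phi_i\,\deriv\mu_\infty-\int \psi\,f^i\,\Phi_i\,\deriv\mu_\infty .
\]
It is cleaner to apply the identity with $\psi\equiv1$ and to absorb the adjoint pairing into the statement, since equivalence is defined through $\mathds{F}^F(\cdot)^*\deriv\mu_\infty$, meaning $\int \mathds{F}^F(\pi)\vartriangleright\varphi\,\deriv\mu_\infty$ for all $\varphi$.

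The term $-\int f^i\Phi_i\,\deriv\mu_\infty$ is the forest where the partner node becomes a $\forestL$ node with index $i$, namely $-\restr{\tilde{\pi}}{\forestN\to\forestL}$. For the other term, I would write $E_i[\Phi_i]$ by the Leibniz rule as a sum over the factors of $\Phi_i$. These factors are the $\forestL$ nodes $E_{I_{\Pi(v)}}[f^{i_v}]$ and the root operator $E_{I_R}[\varphi]$. Since we are in an orthonormal frame with the Weitzenböck-type coordinate derivatives, differentiating a factor by $E_i$ simply prepends $E_i$ to its index list. Combinatorially, this grafts the numbered node onto that vertex.

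The main obstacle is the bookkeeping. I must check that the derivative prepended by the Leibniz rule lands in the position corresponding to \emph{left}-grafting in the paper's right-to-left ordering convention. I must also check how the derivative hitting the root part $E_{I_R}[\varphi]$ is accounted for. With $\psi\equiv1$ after transposition, that term is handled by the outer pairing against $\varphi$. So I would test the whole identity against $\varphi$ and verify that the sum over $V_{\forestL}$ in~\eqref{equation:IBP_def} exactly matches the Leibniz terms. As a sanity check, I would compare with the listed examples $\IBP(\forestE)=-(\forestB+\forestA)$ and $\IBP(\forestK)=-\forestH$.
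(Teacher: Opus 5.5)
Your overall strategy is the right one: prove $\pi\sim\IBP(\pi)$ for every reducible forest via Proposition~\ref{proposition:integrationbypart}, then iterate $\id_{\Irredv}+\IBP$. The paper leaves this argument implicit. The gap is in the one-step computation, which is where the content lies.

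Your representation $\mathds{F}^F(\pi)\vartriangleright\varphi=\sum_i E_i[\Phi_i(\varphi)]$ is false. Elements of $T(\mathfrak{X}_P(\mathcal{M}))$ act with frozen coefficients, and in the definition of $\mathds{F}^F$ the black-node factors multiply $E_{I_R}[\varphi]$ from outside. So the outer $E_i$ does not differentiate them. Write $\Phi_i=C_i\Psi_i$, where $C_i=\prod_{v\in V_{\forestL}}E_{I_{\Pi(v)}}[f^{i_v}]$ and $\Psi_i=E_{I_{R'}}[\varphi]$ are the coefficient and root parts of $\tilde{\pi}$. Here the partner of the removed root carries the index $i$, and the other index sums are suppressed. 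Then $\mathds{F}^F(\pi)\vartriangleright\varphi=\sum_i C_i\,E_i[\Psi_i]$, hence
\[
\sum_i E_i[\Phi_i]=\mathds{F}^F(\pi)\vartriangleright\varphi+\sum_{v\in V_{\forestL}}\mathds{F}^F(\restr{\tilde{\pi}}{\forestN\curvearrowright v})\vartriangleright\varphi .
\]
For example, when $\pi=\forestE$, the expression $E_{d_1}[E_{d_1}[f^i]\,E_i[\varphi]]$ is the sum of the differentials of $\forestE$ and $\forestB$ in Table~\ref{table:tableau_Coefficient_general}.

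Taken literally, your formula with $\psi\equiv1$ gives only $\pi\sim-\restr{\tilde{\pi}}{\forestN\to\forestL}$ (e.g.\ $\forestE\sim-\forestA$) and loses every grafting term. Your later Leibniz discussion does not fix this. The Leibniz term in which $E_i$ hits the root part is not ``handled by the outer pairing against $\varphi$''; it \emph{is} $\mathds{F}^F(\pi)\vartriangleright\varphi$. The grafting terms get their minus sign in~\eqref{equation:IBP_def} precisely because they must be moved to the other side.

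The correct step is to integrate the display against $\deriv\mu_\infty$ and use $\int E_i[\Phi_i]\,\deriv\mu_\infty=-\int f^i\Phi_i\,\deriv\mu_\infty$, which is Proposition~\ref{proposition:integrationbypart} with $\psi\equiv1$. This yields exactly $\int\mathds{F}^F(\pi)\vartriangleright\varphi\,\deriv\mu_\infty=\int\mathds{F}^F(\IBP(\pi))\vartriangleright\varphi\,\deriv\mu_\infty$. Equivalently, apply the proposition with $\Psi_i$ in the role of $\varphi$ and $C_i$ in the role of $\psi$.

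Your termination argument also needs repair. The number of lianas with both endpoints roots does not decrease when the partner of the removed root is a leaf. It is $0$ before and after for $\forestE$. It stays $1$, with reducible images, if a pair of numbered roots is inserted right after the first root of $\forestE$. Finite dimensionality alone proves nothing here. Use the number of numbered roots instead. In every term of $\IBP(\forestN\tilde{\pi})$ the removed root has either disappeared or become a leaf, so this number drops by at least one. Hence $(\id_{\Irredv}+\IBP)^n(\pi)$ is stationary for $n\geq 2|\pi|$.
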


The algebraic characterization of Theorem~\ref{theoreme:characterization} rewrites naturally with planar exotic forests.
\begin{theorem}\label{theorem:cancelling}
 If the coefficient map \(a_w\) of a numerical method satisfies 
 \[\underset{\pi \in \EF_p}{\sum} a_w(\pi) \RED(\pi) = 0\]
  then its Talay-Tubaro expansion satisfies \(A_p^* \deriv \mu_\infty = 0\).
\end{theorem}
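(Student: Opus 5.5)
The plan is to combine the S-series representation of the Talay-Tubaro expansion with Proposition~\ref{proposition:equivalence}, extended by linearity. By definition of the coefficient map \(a_w\), the operator \(A_p\) of Assumption~\ref{assumption:2.6} is the order-\(p\) part of the exotic Lie S-series:
\[
A_p = \underset{\pi \in \EF_p}{\sum} a_w(\pi)\, \mathds{F}^F(\pi).
\]
(In the post-processed case the coefficients of \(\crochet{\mathcal{L}}{\overline{A_{p-1}}}\) are absorbed into \(a_w\), as agreed earlier.) The adjoint with respect to \(L^2(\MM)\) is linear, so
\[
A_p^* \deriv \mu_\infty = \underset{\pi \in \EF_p}{\sum} a_w(\pi)\, \mathds{F}^F(\pi)^* \deriv \mu_\infty .
\]

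The next step is to apply Proposition~\ref{proposition:equivalence} term by term. This replaces each \(\mathds{F}^F(\pi)^*\deriv\mu_\infty\) by \(\mathds{F}^F(\RED(\pi))^*\deriv\mu_\infty\), using that \(\sim\) is compatible with linear combinations. The compatibility holds because \(\pi \mapsto \mathds{F}^F(\pi)^*\deriv\mu_\infty\) is linear, as both \(\mathds{F}^F\) and the adjoint are linear. Hence
\[
A_p^*\deriv\mu_\infty = \mathds{F}^F\Bigl(\underset{\pi \in \EF_p}{\sum} a_w(\pi)\RED(\pi)\Bigr)^*\deriv\mu_\infty = \mathds{F}^F(0)^*\deriv\mu_\infty = 0,
\]
where the middle equality uses the hypothesis together with the linearity of \(\RED\).

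The content therefore lies in Proposition~\ref{proposition:equivalence}. If its proof is needed here, I would prove first that \(\pi \sim \pi + \IBP(\pi)\) whenever \(\pi \notin \Irred\), and that \(\IBP\) vanishes on \(\Irredv\). Iterating this gives \(\pi \sim (\id_{\Irredv}+\IBP)^n \pi\), read as acting by the identity on the irreducible components and by \(\IBP\) on the reducible ones. The sequence is stationary because each application of \(\IBP\) strictly decreases a grading, for instance the number of numbered roots preceding the first non-numbered tree, or the position of the first liana root.

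For the single step, write \(\pi = \forestN\tilde\pi\). Testing against \(\varphi\) and \(\deriv\mu_\infty\), the planar ordering puts the numbered root \(E_{i}\) as the outermost derivative acting on \(\varphi\). I then apply Proposition~\ref{proposition:integrationbypart}, with \(\psi\) equal to the product of the coefficient functions \(E_{I_{\Pi(v)}}[f^{i_v}]\) coming from \(\tilde\pi\). The term \(-\varphi\psi f^i\) produces the substitution \(\forestN\to\forestL\). The Leibniz rule on \(E_i[\psi]\) distributes \(E_i\) over each black node, which is left-grafting \(\forestN\) onto \(v\). Only the numbered partner of \(E_i\) remains as a derivative acting on \(\varphi\), so the result is again a forest acting on \(\varphi\).

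I expect the main obstacle to be checking the orderings. I need to verify that the left-grafting and the right-to-left ordering of \(\Pi(v)\) in the definition of \(\mathds{F}^F\) match the order in which \(E_i\) is applied under the Leibniz rule. I also need to handle the case where the partner liana of the root lies inside \(\tilde\pi\), where differentiation must not hit \(\varphi\)'s own derivatives. The planar convention that the first root is the outermost operator is what should make this consistent.
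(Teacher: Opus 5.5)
Your proof is correct and matches the paper's argument: the paper obtains this theorem from $A_p=\sum_{\pi\in\EF_p}a_w(\pi)\mathds{F}^F(\pi)$, the linearity of $\mathds{F}^F$ and of the adjoint, and Proposition~\ref{proposition:equivalence}, which is exactly how it is invoked in the proof of Theorem~\ref{theorem:conditions2}. There is one slip in your optional sketch of Proposition~\ref{proposition:equivalence}: the one-step relation your integration-by-parts computation actually gives is $\pi\sim\IBP(\pi)$ for reducible $\pi$ (e.g.\ $\forestM\sim-\forestL$), not $\pi\sim\pi+\IBP(\pi)$, which would force $\IBP(\pi)\sim 0$ (e.g.\ $\forestL\sim 0$, false in general), and it is $\pi\sim\IBP(\pi)$ that yields $\pi\sim(\id_{\Irredv}+\IBP)^n\pi$ under your reading of that operator.
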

In order to compute the second order condition from Theorem~\ref{theorem:conditions2}, Table~\ref{table:order2} gives the reduction of second order exotic forests.
\begin{table}[H]
 \centering
 \begin{tabular}{ c | c } 
 {Forest} \(\pi\) & \(\RED(\pi)\) \\
 \hline 
 $\forestA$ & $\forestA$ \\
 $\forestB$ & $\forestB$ \\
 $\forestC$ & $\forestC$ \\
 $\forestD$ & $\forestD$ \\
 $\forestE$ & $-\forestA - \forestB$ \\
 $\forestF$ & $\forestF$ \\
 $\forestG$ & $-\forestC - \forestD$ \\
 $\forestH$ & $ \forestA + \forestB -\forestC$ \\
 $\forestI$ & $-\forestF$ \\
 $\forestJ$ & $ \forestC + \forestD$ \\
 $\forestK$ & $ \forestC - \forestA - \forestB$ \\
 \end{tabular}
 \caption{\scriptsize Reduction of exotic forests of order 2.}\label{table:order2}
\end{table}
\begin{proof}[Proof of Theorem~\ref{theorem:conditions2}]
 By linearity and Proposition~\ref{proposition:equivalence}, the computation of Table~\ref{table:order2} reduces the second order of the error for the invariant measure as
 \begin{align*}
 \underset{\pi \in \EF_2}{\sum} a_w(\pi) \RED (\pi) = & a \left(\forestA - \forestE + \forestH - \forestK \right) \forestA \\
 + & a \left(\forestB - \forestE + \forestH - \forestK \right) \forestB \\
 + & a \left(\forestC - \forestG - \forestH + \forestJ + \forestK \right) \forestC \\
 + & a \left(\forestD - \forestG + \forestJ \right) \forestD \\
 + & a \left(\forestF - \forestI \right) \forestF.
 \end{align*}
 Hence, by Theorem~\ref{theorem:cancelling}, cancelling each coefficients ensures that \(A_2^*\deriv \mu_\infty = 0\). Rewriting these conditions gives the second order conditions from Theorem~\ref{theorem:conditions2}.
\end{proof}

\subsection{Exotic S-series expression of the adjoint operators}\label{sub:Exotic S-series expression of the adjoint operators}
Let us distinguish the coefficient maps for the weak error \(a_w : \EFv \to \mathds{R}\) and for the invariant measure.
Denote by \(\ps{\cdot}{\cdot}\) the bilinear pairing over \(\EFv\) such that, for \(\pi_1, \pi_2 \in \EF\),
\[ \ps{\pi_1}{\pi_2} = 1 \text{ if } \pi_1 = \pi_2 \neq 0 \text{ and } 0 \text{ else}.\]
\begin{definition}\label{definition:a_mu}
 Let \(A : \Irredv \to \EFv\) be the operator
 \[ A : \pi \mapsto {\underset{\tilde{\pi} \in \EF}{\sum}} \ps{\pi}{\RED(\tilde{\pi})}\tilde{\pi} = \underset{k \geq 0}{\underset{\tilde{\pi} \in \EF}{\sum}} \ps{\pi}{{\IBP}^{k}(\tilde{\pi})}\tilde{\pi} .\]
For the coefficient map of a numerical method \(a_w\), let us define \(a_{\mu_\infty} = a_w \circ A\).
\end{definition}
\begin{remark}
 The operator \(A\) is invertible and \(A^{-1} = \id - {\IBP}^{*}\).
\end{remark}

\begin{proposition}
 The map \(a_{\mu_\infty} : \Irredv \to \mathds{R}\) is the coefficient map for the invariant measure, that is, the error~\eqref{equation:error} for the invariant measure of a method of order p is expressed using a S-series indexed only on irreducible forests of order \(p\),
 \[ e(\varphi,h) = h^p \int_{0}^{\infty} \inte{\mathcal{M}} u(t,x) {\big( \underset{\abs{\pi} = p}{\underset{\pi \in \Irred}{\sum}} a_{\mu_\infty}(\pi) \mathds{F}^F (\pi)\big)}^* \deriv \mu_\infty(x) \deriv t.\]

\end{proposition}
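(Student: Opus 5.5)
Combine the error formula of Theorem~\ref{theoreme:characterization} with the reduction of Proposition~\ref{proposition:equivalence}, then reorganise the resulting sum over all forests into a sum over irreducible forests using the definition of \(A\).

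\textbf{Step 1: the error term.} For a method of order \(p\), Theorem~\ref{theoreme:characterization} gives the leading error term
\[h^p\int_0^\infty\int_\MM u(t,x)\,A_{p+1}^*\deriv\mu_\infty(x)\deriv t .\]
The operator \(A_{p+1}\) is the order-\((p+1)\) part of the S-series \(S_h(a_w)\). So I would write \(A_{p+1}=\sum_{\abs{\pi}=p+1}a_w(\pi)\mathds{F}^F(\pi)\) and adapt the indexing to the statement. The stated formula pairs \(h^p\) with forests of order \(p\), so the grading convention has to be aligned carefully. I will treat the relevant operator as \(\sum_{\tilde\pi\in\EF,\,\abs{\tilde\pi}=p}a_w(\tilde\pi)\mathds{F}^F(\tilde\pi)\), matching the convention of Theorem~\ref{theorem:cancelling}.

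\textbf{Step 2: reduction.} By linearity of \(\pi\mapsto\mathds{F}^F(\pi)^*\deriv\mu_\infty\) and Proposition~\ref{proposition:equivalence}, each \(\tilde\pi\) may be replaced by \(\RED(\tilde\pi)\) inside the adjoint applied to \(\deriv\mu_\infty\). This gives
\[\Big(\sum_{\tilde\pi}a_w(\tilde\pi)\mathds{F}^F(\tilde\pi)\Big)^*\deriv\mu_\infty=\Big(\sum_{\tilde\pi}a_w(\tilde\pi)\mathds{F}^F(\RED\tilde\pi)\Big)^*\deriv\mu_\infty .\]
Since \(\RED\) takes values in \(\Irredv\) (the sequence \((\id_{\Irredv}+\IBP)^n\) is stationary and kills the non-irreducible part), expanding \(\RED(\tilde\pi)=\sum_{\pi\in\Irred}\ps{\pi}{\RED(\tilde\pi)}\pi\) and exchanging the finite sums gives
\[\sum_{\pi\in\Irred}\Big(\sum_{\tilde\pi}\ps{\pi}{\RED(\tilde\pi)}a_w(\tilde\pi)\Big)\mathds{F}^F(\pi)=\sum_{\pi\in\Irred}a_w(A\pi)\,\mathds{F}^F(\pi).\]
By Definition~\ref{definition:a_mu}, the coefficient \(a_w(A\pi)\) is exactly \(a_{\mu_\infty}(\pi)\).

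\textbf{Step 3: preserving the order.} I must check that \(\IBP\), and hence \(\RED\), preserves order, so that only irreducible \(\pi\) with \(\abs{\pi}=p\) appear. Removing the numbered root \(\forestN\) and either substituting its partner by \(\forestL\) or grafting it on a \(\forestL\) node turns one liana into one \(\forestL\) node or keeps the liana count. In both cases \(\abs{V_{\forestL}}+\abs{V_L}\) is unchanged, so \(\abs{\cdot}\) is preserved. This also justifies that all sums are finite, since \(\EF_p\) is finite, and that the sum defining \(A\) is finite: \(\IBP\) strictly decreases the number of nodes, hence is nilpotent on each \(\EFv_{\leq k}\).

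\textbf{Main obstacle.} Proposition~\ref{proposition:equivalence} holds only for forests, with \(\mathds{F}^F(\pi)^*\deriv\mu_\infty\) taken as a distribution. It therefore needs to be checked that the integrals against \(u(t,\cdot)\) are well defined, and that exchanging the sum with \(\int_0^\infty\) is legitimate given the exponential decay from Assumption~\ref{assumption:H3}. I expect this to be routine, since the sum is finite.
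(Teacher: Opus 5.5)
Your Steps 2 and 3 are exactly the argument the paper intends. The paper gives no separate proof: the proposition is meant to follow from Theorem~\ref{theoreme:characterization}, Proposition~\ref{proposition:equivalence} and Definition~\ref{definition:a_mu}, via the duality $\sum_{\tilde\pi}a_w(\tilde\pi)\RED(\tilde\pi)=\sum_{\pi\in\Irred}a_w(A\pi)\,\pi$ that you write down. Your check that $\IBP$ preserves the order is also correct.

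\textbf{The gap is in Step 1.} You rightly observe that Theorem~\ref{theoreme:characterization} produces $A_{p+1}=\sum_{|\pi|=p+1}a_w(\pi)\mathds{F}^F(\pi)$. You then simply decide to treat the relevant operator as the order-$p$ part, and nothing licenses this. Theorem~\ref{theorem:cancelling} uses the same grading ($\EF_p$ corresponds to $A_p$), so it confirms the shift rather than removing it.

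In fact the formula cannot hold as literally written. Under the hypotheses of Theorem~\ref{theoreme:characterization} one has $A_p^*\deriv\mu_\infty=0$ (for $p=1$ this is $\mathcal{L}^*\deriv\mu_\infty=0$). By your own Step 2, $\big(\sum_{\pi\in\Irred,\,|\pi|=p}a_{\mu_\infty}(\pi)\mathds{F}^F(\pi)\big)^*\deriv\mu_\infty=A_p^*\deriv\mu_\infty$, so the right-hand side is zero, whereas $e(\varphi,h)$ need not vanish. The statement is off by one in the grading, and it also drops the remainder. What your argument actually proves, and what you should state explicitly rather than re-index by fiat, is
\[
e(\varphi,h)=h^p\int_0^\infty\int_{\mathcal{M}}u(t,x)\Big(\sum_{\pi\in\Irred,\ |\pi|=p+1}a_{\mu_\infty}(\pi)\mathds{F}^F(\pi)\Big)^*\deriv\mu_\infty(x)\deriv t+\mathcal{O}(h^{p+1}).
\]
Since Steps 2--3 do not depend on the grading, they apply verbatim to $A_{p+1}$.

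\textbf{A smaller error sits in Step 3.} $\IBP$ does not strictly decrease the number of nodes, because the grafting terms of Definition~\ref{definition:IBP} re-insert the removed numbered node. For instance $\IBP(\forestE)=-(\forestB+\forestA)$, and $\forestB$ has as many nodes as $\forestE$. What strictly decreases is the number of trees: every forest occurring in $\IBP(\forestN\tilde\pi)$ has exactly one tree fewer than $\forestN\tilde\pi$.

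This is what makes $(\mathrm{id}_{\Irredv}+\IBP)^n$ stationary after at most as many steps as there are trees, reading $\mathrm{id}_{\Irredv}$ as the projection onto $\Irredv$. Hence $\RED$ is well defined with values in $\Irredv$, and $\IBP$ is nilpotent on $\EFv_{\leq k}$. The finiteness of the sum defining $A\pi$ comes instead from order preservation together with the finiteness of $\EF_{|\pi|}$.
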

\begin{exs}
 Using Table~\ref{table:order2}, the computation of \(a_{\mu_\infty}\) gives for the first and second orders:
 \begin{align*}
 a_{\mu_\infty} ( \forestL ) & = a_w (\forestL - \forestM)\\
 a_{\mu_\infty} \left(\forestA \right) & = a_w \left(\forestA - \forestE + \forestH - \forestK \right), \\
 a_{\mu_\infty} \left(\forestB \right) & = a_w \left(\forestB - \forestE + \forestH - \forestK \right) , \\
 a_{\mu_\infty} \left( \forestC \right) & = a_w \left(\forestC - \forestG - \forestH + \forestJ + \forestK \right), \\
 a_{\mu_\infty} \left(\forestD \right) & = a_w \left(\forestD - \forestG + \forestJ \right) \\
a_{\mu_\infty} \left(\forestF \right) & = a_w \left( \forestF - \forestI \right).
 \end{align*}
\end{exs}
It is important to mention that the conditions for the invariant measure are not independent and satisfy relations, analogously to the weak context~\cite{Bronasco25hoi}. However, the map $a_{\mu_\infty}$ is not a character for the shuffle product as one has for instance
 \[ {a_{\mu_\infty}(\forestL)}^2 = 2 a_{\mu_\infty}(\forestC - \forestF) , \quad {a_{w}(\forestL)}^2 = 2 a_{w}(\forestA).\]

\begin{proposition}
 The coefficient map \(a_{\mu_\infty}\) is a character for the modified shuffle product,
 \[ \tilde{\shuffle} : (\pi_1,\pi_2) \mapsto A^{-1}(A\pi_1 \shuffle A\pi_2) = (A\pi_1 \shuffle A\pi_2) - {\IBP}^{*}(A\pi_1 \shuffle A\pi_2) . \]
\end{proposition}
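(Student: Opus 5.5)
The proof is a transport of structure through the invertible map \(A\). We need \(a_{\mu_\infty}(\pi_1 \tilde{\shuffle} \pi_2) = a_{\mu_\infty}(\pi_1)\, a_{\mu_\infty}(\pi_2)\) for irreducible forests \(\pi_1,\pi_2\), together with \(a_{\mu_\infty}(\mathbf{1}) = 1\) on the empty forest.

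The ingredients are two facts.

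1. The weak coefficient map \(a_w\) is a character for the shuffle product \(\shuffle\) on \(\EFv\). This is the statement, recalled above from~\cite{Bronasco25hoi}, that \(a_w\) is a shuffle character, as in \({a_w(\forestL)}^2 = 2a_w(\forestA)\). It follows because \(\mathds{F}^F\) is a Hopf algebra morphism for the deshuffle coproduct. The composition of independent frozen-flow exponentials then makes \(S_h(a_w)\) group-like, so \(a_w\) is a character of the dual shuffle algebra.

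2. The remark that \(A\) is invertible with \(A^{-1} = \id - \IBP^*\).

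The computation then runs as follows. By definition \(a_{\mu_\infty} = a_w \circ A\). Using \(A A^{-1} = \id\) and the character property of \(a_w\),
\[
a_{\mu_\infty}(\pi_1 \tilde{\shuffle} \pi_2) = a_w\bigl(A A^{-1}(A\pi_1 \shuffle A\pi_2)\bigr) = a_w(A\pi_1 \shuffle A\pi_2) = a_w(A\pi_1)\, a_w(A\pi_2) = a_{\mu_\infty}(\pi_1)\, a_{\mu_\infty}(\pi_2).
\]
The unit is handled by \(A\mathbf{1} = \mathbf{1}\), since \(\mathbf{1}\) is irreducible and is not in the image of \(\IBP\).

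Three points must be checked so that this is not merely formal.

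First, \(A\pi\) has to be a well-defined, finite sum in each order, so that \(a_w\) can be applied to it. \(\IBP\) preserves order: it removes one liana and adds one \(\forestL\), or removes one liana and adds a new liana. So \(\IBP\) acts on each finite-dimensional space \(\EFv_k\). Each application of \(\IBP\) strictly decreases the number of leading numbered roots, or more precisely some well-founded measure such as the number of lianas attached at the root level. Hence \(\IBP\) is nilpotent on \(\EFv_k\). This gives both the stationarity of \(\RED\) and the finiteness of the sum defining \(A\).

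Second, \(A^{-1}\) must take \(A\pi_1 \shuffle A\pi_2\) back into \(\Irredv\), so that \(\tilde{\shuffle}\) is a product on \(\Irredv\) and not only on \(\EFv\). Note that \(A\) maps \(\Irredv\) into \(\EFv\), while \(A^{-1}\) is written as an endomorphism \(\id - \IBP^*\). I would interpret \(A^{-1}\) as the map \(\EFv \to \Irredv\) dual to the inclusion relation \(\ps{\pi}{\RED(\tilde\pi)}\), that is, the projection \(\tilde\pi \mapsto \RED\)-adjoint. I would then verify that \(A^{-1}A = \id_{\Irredv}\) by using \(\RED(\pi) = \pi\) for irreducible \(\pi\).

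Third, the identity \(A A^{-1} = \id\) on the relevant subspace actually used above has to hold. This needs \(A\pi_1 \shuffle A\pi_2\) to lie in the image of \(A\). I expect this step to be the main obstacle. I would establish it by showing that the image of \(A\) is the orthogonal complement of \(\ker \RED\). Equivalently, by Proposition~\ref{proposition:equivalence}, the image of \(A\) consists of the elements pairing to zero with \(\pi - \RED(\pi)\) for all \(\pi\). The shuffle of two such elements stays in this complement because \(\RED\) is compatible with the shuffle, in the sense that \(\ker\RED\) is a shuffle-coideal.

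If this compatibility fails, I would fall back on defining the character property only through \(a_w\). Indeed, \(a_w\) vanishes on any component outside the image of \(A\) by the invariant-measure identities, which are exactly the content of Theorem~\ref{theorem:cancelling}. That is enough for the displayed equality.
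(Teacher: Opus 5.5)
Your central computation is exactly the paper's proof. The paper writes $a_{\mu_\infty}(\pi_1)a_{\mu_\infty}(\pi_2)=a_w(A\pi_1)a_w(A\pi_2)=a_w(A\pi_1\shuffle A\pi_2)=a_{\mu_\infty}\bigl(A^{-1}(A\pi_1\shuffle A\pi_2)\bigr)$, cites~\cite{Bronasco25hoi} for the $\shuffle$-character property of $a_w$, and stops. It takes $A^{-1}=\mathrm{id}-{\IBP}^{*}$ from the preceding remark and never checks that $A\pi_1\shuffle A\pi_2\in A(\Irredv)$, i.e.\ that $\pi_1\tilde{\shuffle}\pi_2$ lies in $\Irredv$, where $a_{\mu_\infty}$ is defined. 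So your third point is a genuine issue, but one the paper's argument shares.

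Your coideal plan for it is correct, but you leave it unproved. Your fallback is false and should be deleted: $a_w$ does not vanish off $A(\Irredv)$. In order one, $A(\Irredv)\cap\EFv_1$ is spanned by $A\forestL=\forestL-\forestM$, while every consistent method has $a_w(\forestL)=a_w(\forestM)=1$. It is $a_w(\forestL-\forestM)=a_{\mu_\infty}(\forestL)$ that vanishes. Theorem~\ref{theorem:cancelling} is only a sufficient condition for $A_p^*\deriv\mu_\infty=0$ and says nothing of this kind. Your reinterpretation of $A^{-1}$ in the second point is also unnecessary, as the next paragraph shows.

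The closure does hold, with a short proof. Let $Q$ be the projection onto the span of reducible forests and $P=\mathrm{id}-Q$.
\begin{itemize}
\item Since $\IBP=\IBP\circ Q$, one has ${\IBP}^{*}=Q\circ{\IBP}^{*}$.
\item The second expression in Definition~\ref{definition:a_mu} reads $A=(\mathrm{id}-{\IBP}^{*})^{-1}$ on $\Irredv$. Nilpotency is as in your first point: each term of $\IBP$ lowers the number of numbered roots.
\item Hence $x\in A(\Irredv)$ if and only if $Qx={\IBP}^{*}x$. In that case $A^{-1}x=(\mathrm{id}-{\IBP}^{*})x=Px$, so $A^{-1}$ simply discards reducible forests.
\end{itemize}
Let $\prec$ and $\succ$ be the half-shuffles in which the first tree comes from the left, respectively right, factor. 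Trivially $Q(x\shuffle y)=Qx\prec y+x\succ Qy$.

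Moreover, ${\IBP}^{*}\omega$ is minus a sum of forests, one for each $\bullet$ leaf of $\omega$ and each numbered leaf in the left-grafting position of a $\bullet$ node. Each forest is obtained by undoing the corresponding move of Definition~\ref{definition:IBP} and prepending the new numbered root. The nodes involved (the marked leaf and, in the grafting case, its liana partner) always belong to a single factor of a shuffle. This gives ${\IBP}^{*}(x\shuffle y)={\IBP}^{*}x\prec y+x\succ{\IBP}^{*}y$.

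Therefore $A(\Irredv)$ is closed under $\shuffle$, and $\pi_1\tilde{\shuffle}\pi_2=P(A\pi_1\shuffle A\pi_2)\in\Irredv$. Your computation, and the paper's, then becomes rigorous. For instance, $P\bigl((\forestL-\forestM)\shuffle(\forestL-\forestM)\bigr)=2\forestC-2\forestF$ recovers the paper's identity ${a_{\mu_\infty}(\forestL)}^2=2a_{\mu_\infty}(\forestC-\forestF)$. Dually, this is exactly your coideal property for $\ker\RED=A(\Irredv)^{\perp}$.
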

\begin{proof}
 Definition~\ref{definition:a_mu} ensures that for all \(\pi_1,\pi_2\in \Irred\),
 \begin{align*}
 a_{\mu_\infty}(\pi_1)a_{\mu_\infty}(\pi_2) = & a_w(A\pi_1)a_w(A\pi_2) \\
 = & a_w(A\pi_1 \shuffle A \pi_2) \\
= & a_{\mu_\infty}(A^{-1}\left( A\pi_1 \shuffle A \pi_2 \right)),
 \end{align*}
 where we used that $a_w$ is a character for $\shuffle$ (see~\cite{Bronasco25hoi}).
 Hence the result.
\end{proof}
\begin{theorem}\label{theorem:order k}
    Under the assumptions of Theorem~\ref{theoreme:characterization} (or Theorem~\ref{theorem:theorem_post} for post-processed methods) if the coefficient map for the invariant measure of a consistent numerical method satisfies
 \[ \restr{a_{\mu_\infty}}{\Irred_{\leq k}} \equiv 0 ,\]
 then the method is of order \(k\) for the invariant measure.
\end{theorem}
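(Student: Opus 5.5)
The plan is to reduce Theorem~\ref{theorem:order k} to Theorem~\ref{theoreme:characterization} (or Theorem~\ref{theorem:theorem_post} in the post-processed case). Concretely, I would show that the vanishing of \(a_{\mu_\infty}\) on \(\Irred_{\leq k}\) forces \(A_j^*\deriv\mu_\infty = 0\) for every \(j=2,\dots,k\). I would also treat \(j=1\), which is automatic by consistency since \(A_1=\mathcal{L}\) and \(\mathcal{L}^*\deriv\mu_\infty=0\).

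The first step is to identify \(A_j\) with the homogeneous part of order \(j\) of the exotic S-series. By Definition~\ref{definition:Sseries} and the definition of \(a_w\), comparing the Taylor--Talay--Tubaro expansion of Assumption~\ref{assumption:2.6} with \(S_h(a_w)\vartriangleright\varphi\) gives
\[ A_j = \sum_{\pi\in\EF_j} a_w(\pi)\,\mathds{F}^F(\pi). \]
By Proposition~\ref{proposition:equivalence} and linearity of \(\mathds{F}^F\) and of taking adjoints, we then get \(A_j^*\deriv\mu_\infty = \mathds{F}^F\bigl(\sum_{\pi\in\EF_j}a_w(\pi)\RED(\pi)\bigr)^*\deriv\mu_\infty\). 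This is exactly the setting of Theorem~\ref{theorem:cancelling}. It therefore suffices to prove
\[ \sum_{\pi\in\EF_j} a_w(\pi)\RED(\pi)=0 \quad\text{for } j\leq k. \]

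The second step is to express this sum in the irreducible basis using Definition~\ref{definition:a_mu}. Expanding \(\RED(\tilde\pi)=\sum_{\pi\in\Irred}\ps{\pi}{\RED(\tilde\pi)}\pi\) and exchanging sums gives
\[ \sum_{\tilde\pi\in\EF_j} a_w(\tilde\pi)\RED(\tilde\pi)=\sum_{\pi\in\Irred}\Bigl(\sum_{\tilde\pi}\ps{\pi}{\RED(\tilde\pi)}a_w(\tilde\pi)\Bigr)\pi=\sum_{\pi\in\Irred}a_{\mu_\infty}(\pi)\,\pi. \]
Here I must check that \(\RED\) preserves the order \(|\cdot|\). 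This holds because \(\IBP\) either replaces a numbered node of a liana by \(\forestL\), or regrafts it while keeping the liana, so \(|V_{\forestL}|+|V_L|\) is unchanged. Two consequences follow. Only irreducible forests of order \(j\) appear in the sum, and the restriction of \(a_{\mu_\infty}\) to order \(j\) only sees \(a_w\) on \(\EF_j\). The hypothesis \(a_{\mu_\infty}|_{\Irred_{\leq k}}\equiv0\) then kills every coefficient, so \(A_j^*\deriv\mu_\infty=0\) for \(2\le j\le k\).

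Finally, I would apply Theorem~\ref{theoreme:characterization} with \(p=k\), using the ergodicity and moment assumptions already supposed. In the post-processed case, the coefficients of \([\mathcal{L},\overline{A_j}]\) have already been absorbed into those of \(A_{j+1}\), as stated before Theorem~\ref{theorem:conditions2}. With that convention, the same computation yields the conditions of Theorem~\ref{theorem:theorem_post}. I expect the main obstacle to be the bookkeeping in the second step. One difficulty is justifying that \(\RED\) is well defined (the stationarity of the sequence) and order-preserving. The other is making precise the indexing shift between "order \(k\) for the invariant measure" and the vanishing of \(A_j^*\deriv\mu_\infty\) up to \(j=k\) versus \(k+1\), particularly in the post-processed variant. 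I would settle this by matching directly with the hypotheses \(A_j^*\deriv\mu_\infty=0\), \(j=2,\dots,p\), of Theorem~\ref{theoreme:characterization}.
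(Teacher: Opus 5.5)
Your argument is correct and follows the route the paper intends. The paper gives no separate proof of this theorem, but the proof of Theorem~\ref{theorem:conditions2} is exactly your computation at order $2$: rewrite $\sum_{\pi}a_w(\pi)\RED(\pi)$ in the irreducible basis, read off the coefficients as $a_{\mu_\infty}$, then conclude with Theorem~\ref{theorem:cancelling} and Theorem~\ref{theoreme:characterization} with $p=k$.

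The stationarity of $\RED$, which you list as an obstacle, is immediate. Applying $\IBP$ to a reducible forest deletes its leading one-node tree, and neither the substitution nor the left-grafting creates a tree. So the number of trees strictly decreases, and the sequence becomes stationary after at most that many steps.

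The one claim to tighten is the post-processed case with $k\geq 3$. There the relevant result is Theorem~\ref{theorem:theorem_post} with $p=k-1$, not Theorem~\ref{theoreme:characterization} with $p=k$. Its hypotheses ask for $A_{j+1}^*\deriv\mu_\infty=0$ and $\overline{A_j}^*\deriv\mu_\infty=0$ separately for $j\leq k-2$. If you absorb every $[\mathcal{L},\overline{A_j}]$ into $A_{j+1}$ and require the single resulting coefficient map to vanish, you only get $(A_{j+1}+[\mathcal{L},\overline{A_j}])^*\deriv\mu_\infty=0$, which does not imply those separate conditions. So the post-processor conditions are not encoded in $a_{\mu_\infty}$ and must be taken as part of ``the assumptions of Theorem~\ref{theorem:theorem_post}''. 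Once they hold, $[\mathcal{L},\overline{A_j}]^*\deriv\mu_\infty=-\mathcal{L}^*\overline{A_j}^*\deriv\mu_\infty=0$ for $j\leq k-2$. Absorbing those lower commutators is then harmless, and only $[\mathcal{L},\overline{A_{k-1}}]$ matters; for $k=2$ this is exactly the convention the paper states.
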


\begin{remark}
    If a method is consistent then \(\restr{a_{\mu_\infty}}{\Irred_{\leq 1}} \equiv 0\).
\end{remark}

\section{Numerical experiments}\label{section:Numerical experiments}
\subsection{Ergodicity and long-time sampling on \(SO_p\)}\label{sub:Experiments on SO_3}
Our first experiment focuses on \(\MM=SO_p\) the compact Lie group of special orthogonal matrices of size \(p \times p\). Let \({\left(A_d\right)}_{d=1,\ldots,D}\) be an orthonormal basis of its Lie algebra \(\mathfrak{so}_p\) of skew-symmetric matrices for the metric \(g_x(A,B) = \Trace \left( A^T B \right)\), \(A,B \in T_x \MM\). Let \(E_d(y) = A_d y\) be the associated orthonormal frame basis. As the manifold is smooth and compact, the assumptions of Theorem~\ref{theorem:theorem_post} are satisfied and our analysis applies.
\begin{remark}
 On a matrix space with \(A_1,\dots,A_D\) as a basis of the Lie algebra, the frame is \(E_d(p) = A_d p\), the geodesic flow \(\exp\) is given by the matrix exponential \(\Exp\), and Method~\ref{method:pp} becomes the following.
 \begin{algorithm}[H]
 \caption*{\textbf{Method 1} Post-processed frozen-flow method for equation~\eqref{equation:Langevin} on $SO_p$}
 \begin{algorithmic}
 \STATE$H_n = \Exp \left( \frac{\sqrt{2h}}{2} \xi^d_n  A_d\right) X_n$,
 \STATE$X_{n+1} = \Exp \left( \left(\frac{5h}{4} f^d(H_n) + \frac{\sqrt{2h}}{4} \xi^d_n\right) A_d \right)\Exp \left( \left(- \frac{h}{4} f^d(H_n) + \frac{3\sqrt{2h}}{4} \xi^d_n\right) A_d \right) X_n$,
 \STATE$\overline{X_N} = \Exp \left( \frac{\sqrt{2h}}{2} \xi^d_N A_d \right) X_N$.
 \end{algorithmic}
 \end{algorithm}
\end{remark}
Inspired by the experiments in~\cite{Bronasco25hoi}, we compare the new methods of high order for the invariant measure to the frozen-flow Euler method~\eqref{equation:frozenEuler} and the so-called SFF2 method from~\cite{Bronasco25hoi}, which uses two random variables and is of weak order 2.
We consider dynamics with the test function
\begin{equation}
\label{eq:num_test_fct}
\varphi(x) = \exp \left( - \frac{g_x (x-I_p,x-I_p)}{2p} \right). 
\end{equation}
The reference solution is taken as the mean value of the SFF2 method~\cite{Bronasco25hoi} with the reference time step \(h=2^{-12}\).
We compare the order of convergence in long time for two potentials of the form \(V = P( \norm{\cdot - I_p}^2 ) \) where \(P\) polynomial: a quadratic potential well with \(P = 10 X\) and a sextic potential with multiple local minima using the cubic polynomial \( P = \frac{X}{2} (X^2-9X+24)\).
Starting from the initial condition \(X_0 =  - \left(\begin{smallmatrix} 1&0&0\\0&0&1\\0&1&0 \end{smallmatrix}\right)\), we observe in Figure~\ref{fig:ergodique_quadratic} the ergodic behaviour of the dynamic as the trajectories explore a neighbourhood of the identity matrix after an initial transient phase.
\begin{figure}[H]
 \centering
 \begin{subfigure}[b]{0.44\linewidth}
 \includegraphics[width=\linewidth]{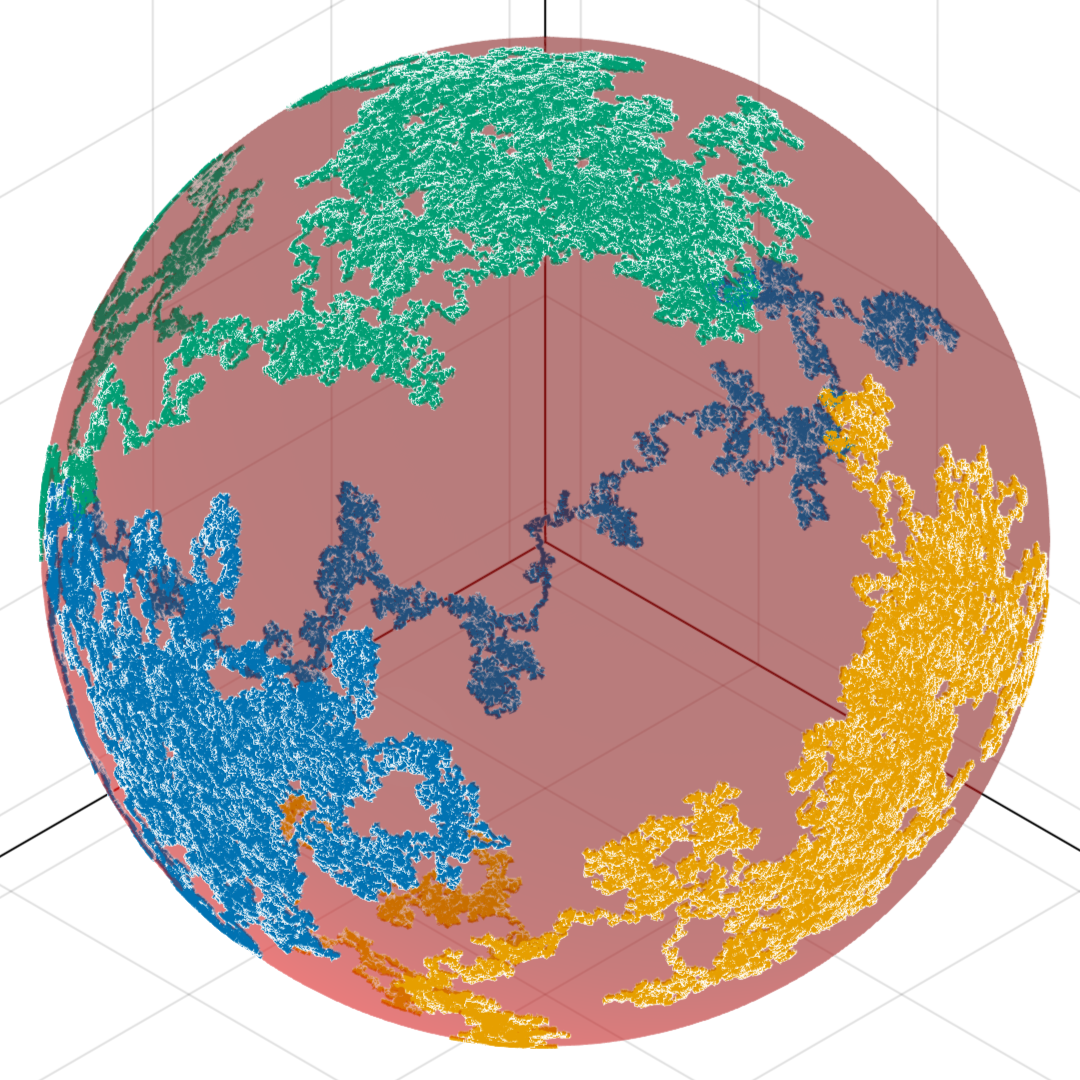}
 \end{subfigure}
 \hfill
 \begin{subfigure}[b]{0.54\linewidth}
 \includegraphics[width=\linewidth]{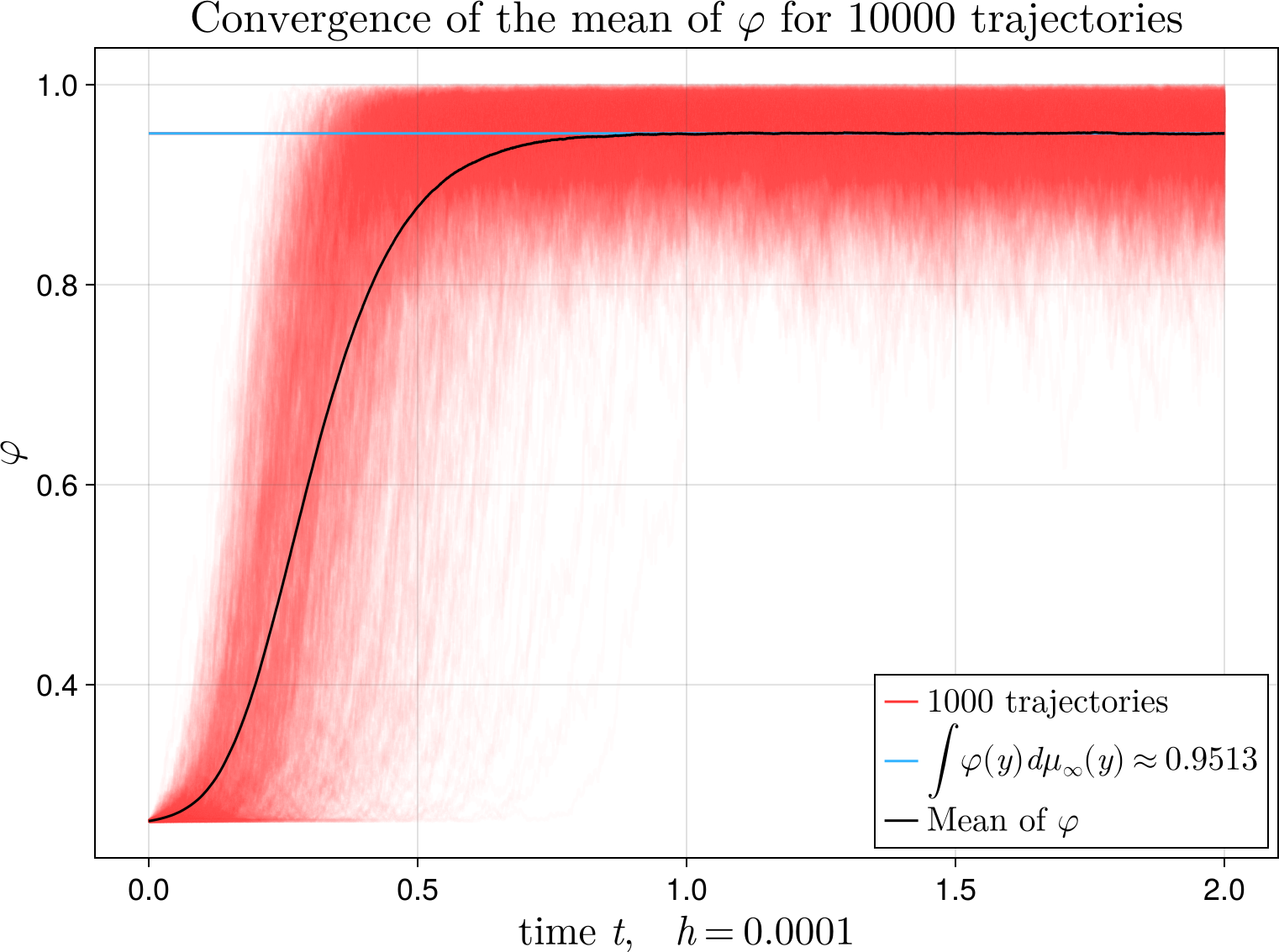}
 \end{subfigure}
 \caption{\scriptsize Trajectory on \(SO_3\) with quadratic potential at time \(T=2\) and step time \(h = 10^{-5}\), represented by the three columns on the sphere \(\mathds{S}^2\) \ (Left). Ergodic convergence computed with \(10^4\) trajectories and \(h=10^{-4}\) for the quadratic potential and the test function \(\varphi\)\ (Right). The method used is Method~\ref{methode1} in both computation. }\label{fig:ergodique_quadratic}
\end{figure}


We observe the error curves for the invariant measure in Figure~\ref{fig:ergodique_So}. We observe that the new methods~\ref{method:pp},~\ref{methode1} and~\ref{methode2} exhibit the expected second-order behavior, with a reduced cost compared to SFF2, which confirms our theoretical results. Moreover, Method~\ref{method:pp} rapidly reaches the Monte-Carlo threshold in the quadratic case, hinting that it may have a higher order of convergence for this specific potential.
\begin{figure}[H]
 \centering
 \begin{subfigure}[b]{0.49\linewidth}
 \includegraphics[width=\linewidth]{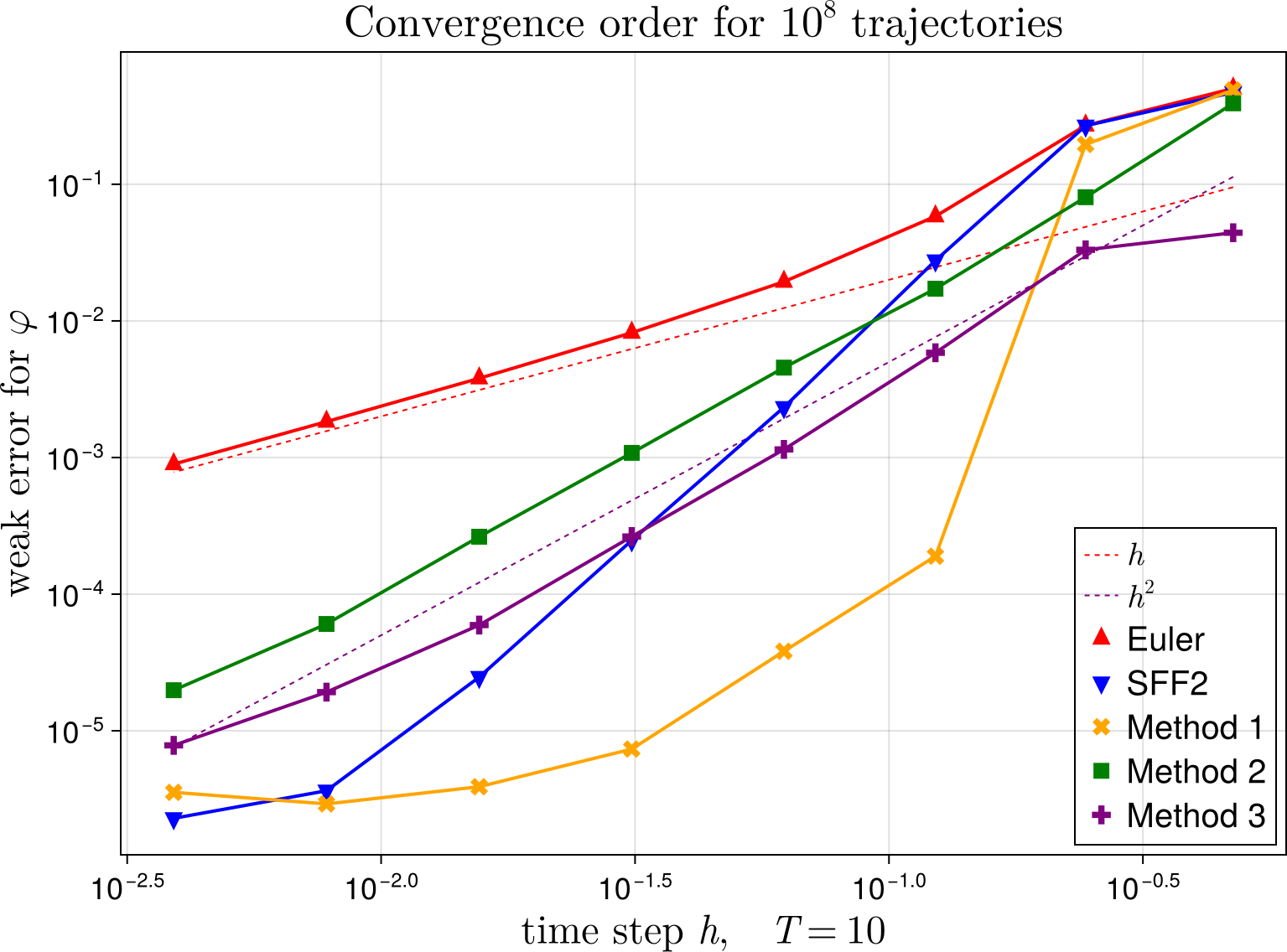}
 \end{subfigure}
 \hfill
 \begin{subfigure}[b]{0.49\linewidth}
 \includegraphics[width=\linewidth]{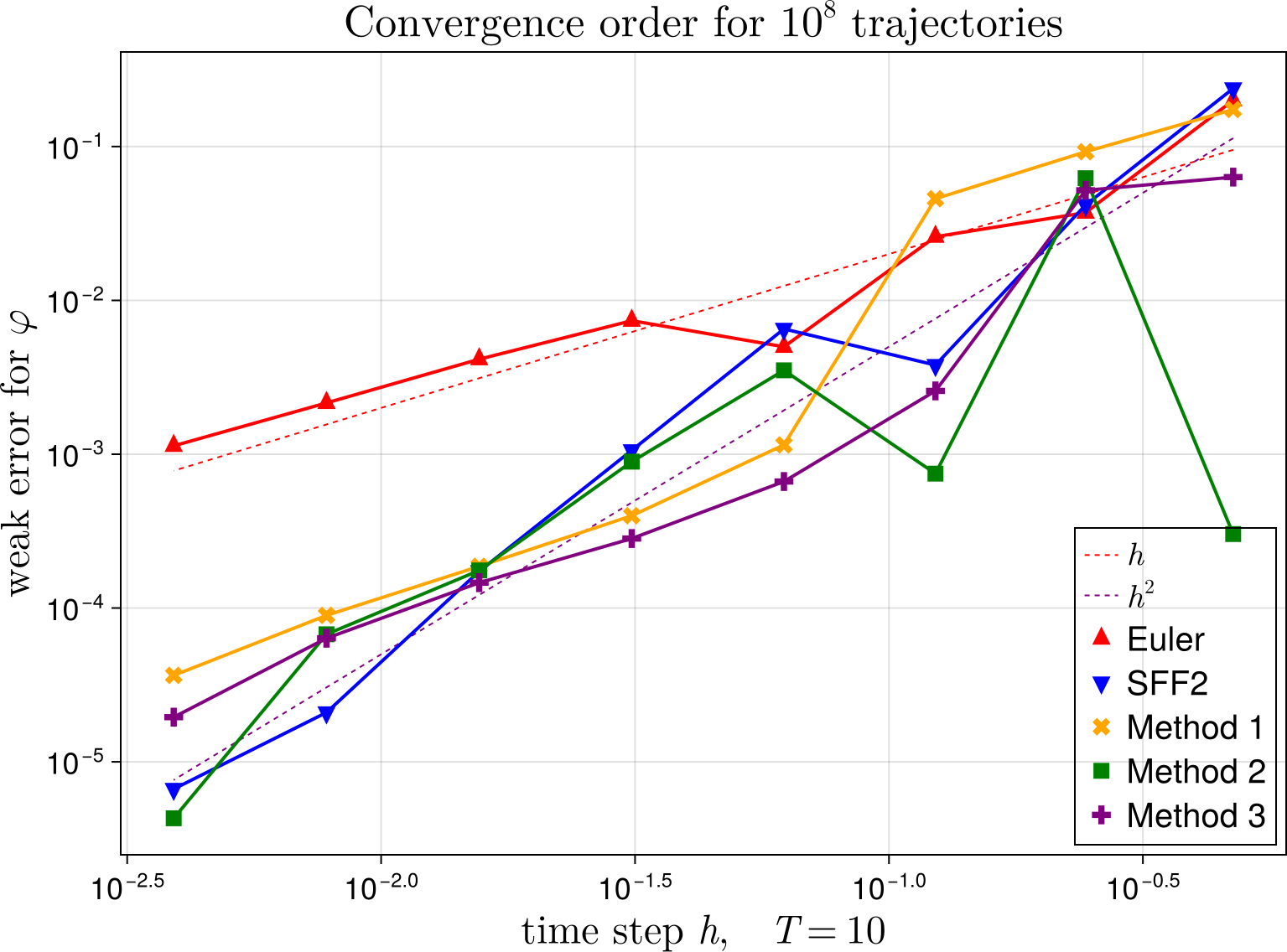}
 \end{subfigure}
 \caption{\scriptsize Order of convergence in long time for two potentials: quadratic (left) and sextic (right). The mean is computed at the final time \(T=10\) with \(10^8\) trajectories and with the test function \(\varphi\) given by~\eqref{eq:num_test_fct}.}\label{fig:ergodique_So}
\end{figure}

\subsection{Von-Mises Fisher dynamics on the sphere \(\mathds{S}^2\) }\label{sub:Experiments on S^2}

Our second experiment focuses on the 2-dimensional sphere \(\MM=\mathds{S}^2\). As our new methods rely on orthonormal bases, we consider the standard coordinates on \(\mathds{S}^2\) minus the poles  \( ( 0 , 0 , \pm 1) \):
\[ y_{\theta,\phi} = \begin{pmatrix} \cos (\theta) \cos (\phi) \\ \cos (\theta) \sin (\phi) \\ \sin (\theta) \end{pmatrix} \in \mathds{S}^2, \quad \theta \in \left(-\frac{\pi}{2},\frac{\pi}{2}\right), \quad \phi \in \left[0,2\pi\right), \]
with the associated orthonormal frame:
\[ E_1(y_{\theta,\phi}) = \begin{pmatrix} -\sin (\theta) \cos (\phi) \\ -\sin (\theta) \sin (\phi) \\ \cos (\theta) \end{pmatrix}, \quad  E_2(y_{\theta,\phi}) = \begin{pmatrix} - \cos (\theta) \sin (\phi) \\ \cos (\theta) \cos (\phi) \\ 0 \end{pmatrix}. \]
Using the symmetry \( S : (x,y,z) \mapsto (z,y,x) \), we define a second frame on \(\mathds{S}^2 \) minus \( ( \pm 1 , 0 , 0) \). For one step of the method, we use the first frame basis if $\abs{\sin(\theta)}\leq 0.6$, and the second frame else. This ensures a bounded Lipschitz constant for the chosen frame in each case and the assumptions of our analysis are satisfied.
Following the experiments of~\cite{Laurent21ocf,Bharath23sae,Bronasco25hoi}, we consider the potential  \(V : (x,y,z) \mapsto -25z \), the associated vector field (containing the Ito correction~\eqref{equation:correction})
\[F = - \nabla V - \nabla_{E_1} E_1 - \nabla_{E_2} E_2,\]
and the test function \(\varphi : (x,y,z) \mapsto z^2 \).
This choice of potential confines the trajectories around the north pole of \(\mathds{S}^2\) (see Figure~\ref{fig:ergodique_sphere}).
The reference solution is chosen as the mean value of the SFF2 method~\cite{Bronasco25hoi} with the reference time step \(h=\frac{1}{ 25 \times 2^{10}}\).

We compare the new methods of high order for the invariant measure with the methods of~\cite{Bronasco25hoi}. The error curves for the invariant measure in Figure~\ref{fig:ergodique_sphere} display the expected orders of convergence. In particular, Methods~\ref{methode1} and~\ref{methode2} exhibit second-order behavior, while Method~\ref{method:pp} reaches the Monte-Carlo threshold instantly.
\begin{figure}[H]
 \centering
 \begin{subfigure}[b]{0.44\linewidth}
 \includegraphics[width=\linewidth]{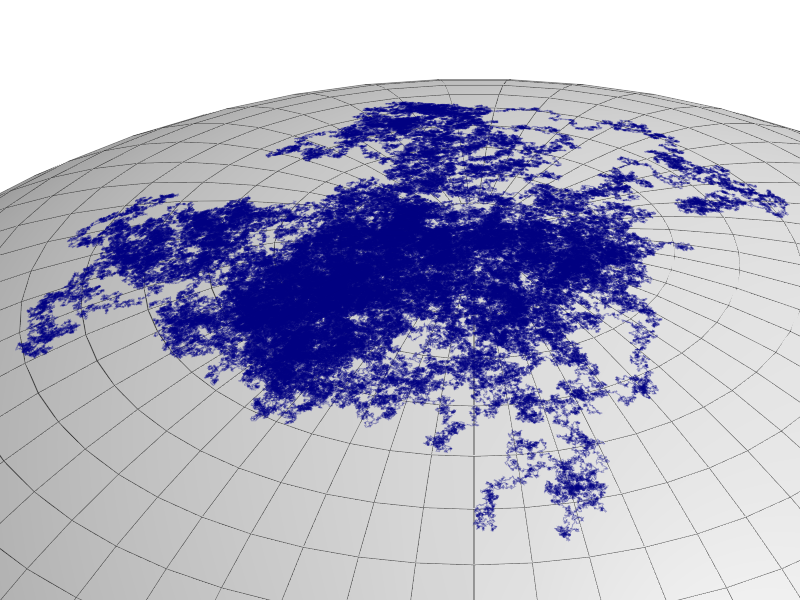}\label{fig:traj_Sphere}
 \end{subfigure}
 \hfill
 \begin{subfigure}[b]{0.54\linewidth}
 \includegraphics[width=\linewidth]{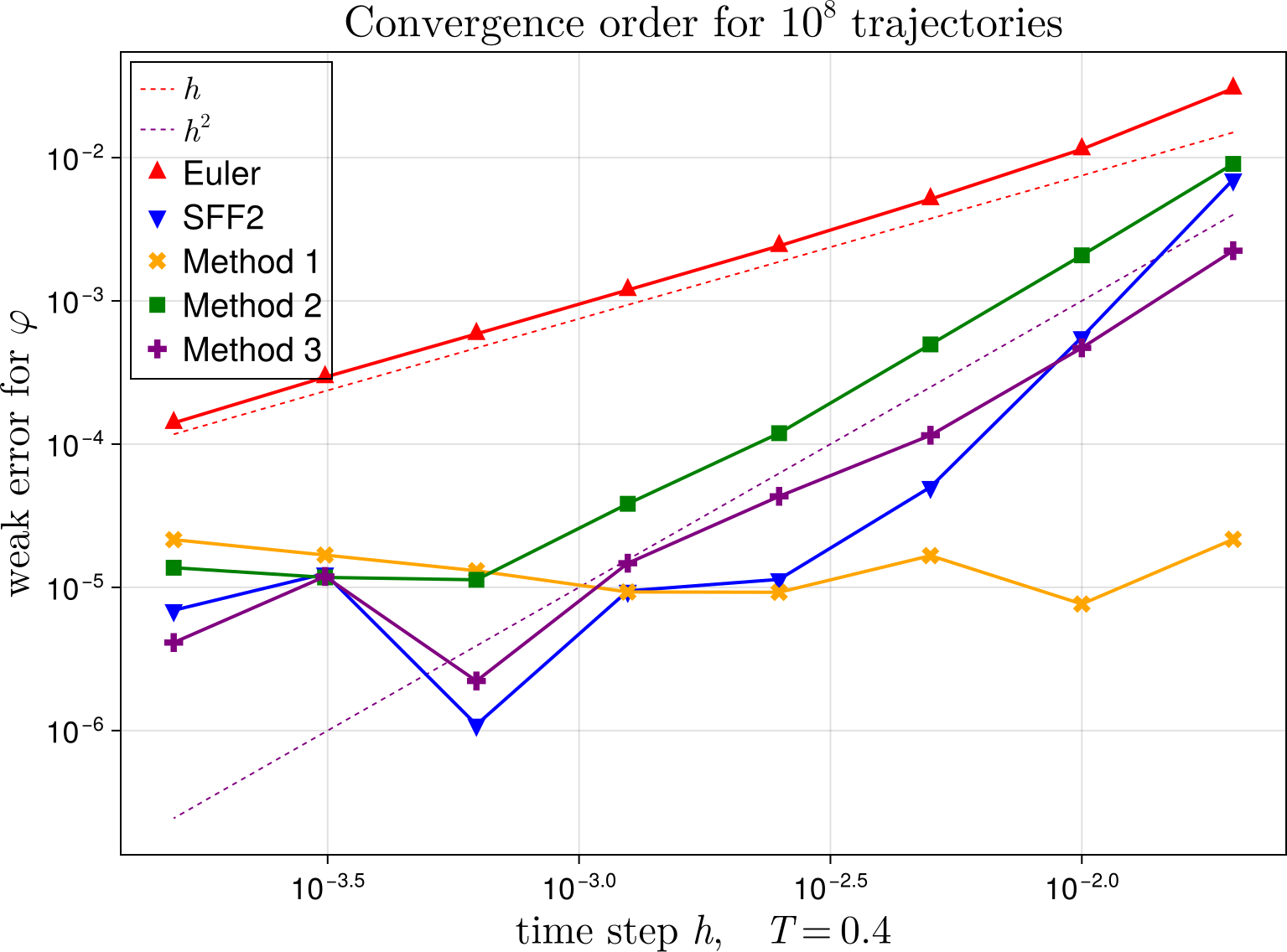}\label{fig:order_sphere}
 \end{subfigure}
 \caption{\scriptsize Trajectory for the Von-Mises Fisher dynamics on the sphere \(\mathds{S}^2\) at time \(T=0.4\) and step time \(h = 4 \times 10^{-6}\)\ (Left). Convergence order for the Von-Mises Fisher dynamics on the sphere \(\mathds{S}^2\) at time \(T=0.4\) with \(10^8\) trajectories and with the test function \(\varphi : (x,y,z) \mapsto z^2 \)\ (Right). }\label{fig:ergodique_sphere}
\end{figure}

\section{Conclusion}\label{section:conclusion}

In this paper, we designed new numerical schemes of high accuracy tailored for the long time sampling of ergodic stochastic dynamics on Riemannian manifolds. We generalized the Euclidean analysis~\cite{Abdulle14hon,Debussche12wbe} to characterize the intrinsic approximation for the invariant measure on manifolds, and presented new algebraic operations on exotic forests~\cite{Bronasco25hoi} to compute the order conditions in long time.
The analysis applies naturally to Riemannian Langevin dynamics and the new methods, which generalise in particular the Leimkuhler-Matthews method, show second order of convergence for a reduced cost on numerical experiments.

The present paper opens several avenues for future research.
The new discretisations could naturally be combined with popular sampling techniques, such as perturbations~\cite{Lelievre13onr, Duncan16vru, Abdulle19act, Bronasco25els}.
The analysis for the invariant measure, as well as the associated algebraic formalism, will be extended for the design of modified equations of arbitrarily high order for sampling SDEs on manifolds and for the challenging derivation of intrinsic stochastic backward error analysis, generalising~\cite{Laurent20eab, Bronasco22cef}.
Similar to the Euclidean setting~\cite{Busnot26osr}, one could extend the analysis to general SDEs with multiplicative noise and the creation of higher order methods.
Our approach follows the one of deterministic Lie-group methods and thus relies on a curvature-free connection, unrelated to the natural Levi-Civita connection on Riemannian manifolds. We will extend the frozen-flow methods so that they rely only on Riemannian operations. The study of such new methods is already open in the deterministic setting and relies on the challenging general understanding of the connection algebra~\cite{AlKaabi22aao, MuntheKaas24gio, Stava24oca}.
Finally, the new algebraic formalism of planar exotic series is interesting in itself and could be studied for its universal combinatorial, algebraic, and geometric properties~\cite{McLachlan16bsm, MuntheKaas16abs, Floystad20tup, Laurent23tue, Busnot26tft}, but also for its potential applications in different fields, in the spirit of the use of Butcher series and their extensions in rough paths~\cite{Hairer15gvn,Lejay22cgr}, renormalisation theory~\cite{Brouder00rkm}, variational calculus~\cite{Laurent23tab,Laurent23tld}, approximation of PDEs~\cite{Bronsard22aod}, mathematical physics~\cite{Bonicelli25ebs}, and wave kinetic dynamics~\cite{Deng23fdo}.
These projects will be studied in upcoming works.

\bigskip

\noindent{\textbf{Acknowledgements}.}\
The authors would like to thank Baptiste Huguet for helpful comments on a previous version of the present work.
The authors acknowledge the support from the French program ANR-25-CE40--2862--01 (MaStoC --- Manifolds and Stochastic Computations).
Experiments presented in this paper were carried out using the Abaca testbed, supported by Inria (see https://abaca.inria.fr).

\footnotesize{
 \bibliographystyle{abbrv} 
 \bibliography{ma_bibliographie}
}

\end{document}

%% file: planarforest_tikz.tex
\newcommand{\forestA}{
    \tikz[planar forest ] {

        \node[b] at (0.0, 0.0) { }
        child {node[b] at (0.0, 1.0) { }
            }
        ;
    }}

\newcommand{\forestB}{
    \tikz[planar forest ] {

        \node[b] at (0.0, 0.0) { }
        child {node[lc] at (-0.5, 1.0) { 1 }
            }
        child {node[lc] at (0.5, 1.0) { 1 }
            }
        ;
    }}

\newcommand{\forestC}{
    \tikz[planar forest ] {

        \node[b] at (0.0, 0.4) { }
        ;
        \node[b] at (1.0, 0.4) { }
        ;
    }}

\newcommand{\forestD}{
    \tikz[planar forest ] {

        \node[b] at (0.0, 0.0) { }
        child {node[lc] at (0.0, 1.0) { 1 }
            }
        ;
        \node[lc] at (1.0, 0.0) { 1 }
        ;
    }}

\newcommand{\forestE}{
    \tikz[planar forest ] {

        \node[lc] at (0.0, 0.0) { 1 }
        ;
        \node[b] at (1.0, 0.0) { }
        child {node[lc] at (0.0, 1.0) { 1 }
            }
        ;
    }}

\newcommand{\forestF}{
    \tikz[planar forest ] {

        \node[b] at (0.0, 0.4) { }
        ;
        \node[lc] at (1.0, 0.4) { 1 }
        ;
        \node[lc] at (2.0, 0.4) { 1 }
        ;
    }}

\newcommand{\forestG}{
    \tikz[planar forest ] {

        \node[lc] at (0.0, 0.4) { 1 }
        ;
        \node[b] at (1.0, 0.4) { }
        ;
        \node[lc] at (2.0, 0.4) { 1 }
        ;
    }}

\newcommand{\forestH}{
    \tikz[planar forest ] {

        \node[lc] at (0.0, 0.4) { 1 }
        ;
        \node[lc] at (1.0, 0.4) { 1 }
        ;
        \node[b] at (2.0, 0.4) { }
        ;
    }}

\newcommand{\forestI}{
    \tikz[planar forest ] {

        \node[lc] at (0.0, 0.4) { 2 }
        ;
        \node[lc] at (1.0, 0.4) { 2 }
        ;
        \node[lc] at (2.0, 0.4) { 1 }
        ;
        \node[lc] at (3.0, 0.4) { 1 }
        ;
    }}

\newcommand{\forestJ}{
    \tikz[planar forest ] {

        \node[lc] at (0.0, 0.4) { 2 }
        ;
        \node[lc] at (1.0, 0.4) { 1 }
        ;
        \node[lc] at (2.0, 0.4) { 2 }
        ;
        \node[lc] at (3.0, 0.4) { 1 }
        ;
    }}

\newcommand{\forestK}{
    \tikz[planar forest ] {

        \node[lc] at (0.0, 0.4) { 1 }
        ;
        \node[lc] at (1.0, 0.4) { 2 }
        ;
        \node[lc] at (2.0, 0.4) { 2 }
        ;
        \node[lc] at (3.0, 0.4) { 1 }
        ;
    }}

\newcommand{\forestL}{
    \tikz[planar forest ] {

        \node[b] at (0.0, 0.4) { }
        ;
    }}

\newcommand{\forestM}{
    \tikz[planar forest ] {

        \node[lc] at (0.0, 0.4) { 1 }
        ;
        \node[lc] at (1.0,0.4) { 1 }
        ;
    }}

\newcommand{\forestN}{
    \tikz[planar forest ] {

        \node[lc] at (0.0, 0.4) { n }
        ;
    }}

\newcommand{\forestAA}{
    \tikz[planar forest ] {
        \node[lc] at (0.0, 0.0) { 2 }
        ;
        \node[lc] at (1.0, 0.0) { 1 }
        ;
        \node[b] at (2.0, 0.0) { }
        child {node[lc] at (0.0, 1.0) { 1 }
            }
        ;\node[b] at (3.0, 0.0) { }
        child {node[lc] at (0.0, 1.0) { 2 }
            }
        ;
    }}

\newcommand{\forestAB}{
    \tikz[planar forest ] {

        \node[lc] at (0.0, 0.0) { 1 }
        ;
        \node[b] at (1.0, 0.0) { }
        child {node[lc] at (-0.5, 1.0) { 2 }
            }
        child {node[lc] at (0.5, 1.0) { 1 } }
        ;\node[b] at (2.5, 0.0) { }
        child {node[lc] at (0.0, 1.0) { 2 } }
        ;
    }}

\newcommand{\forestAC}{
    \tikz[planar forest ] {

        \node[lc] at (0.0, 0.0) { 1 }
        ;
        \node[b] at (1.0, 0.0) { }
        child {node[lc] at (0.0, 1.0) { 1 } }
        ;\node[b] at (2.5, 0.0) { }
        child {node[lc] at (-0.5, 1.0) { 2 } }
        child {node[lc] at (0.5, 1.0) { 2 }
            }
        ;
    }}

\newcommand{\forestAD}{
    \tikz[planar forest ] {

        \node[lc] at (0.0, 0.0) { 1 }
        ;
        \node[b] at (1.0, 0.0) { }
        child {node[lc] at (0.0, 1.0) { 1 } }
        ;\node[b] at (2.0, 0.0) { }
        child {node[b] at (0.0, 1.0) {  } }
        ;
    }}

\newcommand{\forestAE}{
    \tikz[planar forest ] {

        \node[b] at (0.0, 0.0) { }
        child {node[lc] at (-0.5, 1.0) {1 }}
        child {node[lc] at (0.5, 1.0) { 2}}
        ;
        \node[b] at (2.0, 0.0) { }
        child {node[lc] at (-0.5, 1.0) {1 }}
        child {node[lc] at (0.5, 1.0) { 2}};
    }}

\newcommand{\forestAF}{
    \tikz[planar forest ] {

        \node[b] at (0.0, 0.0) { }
        child {node[lc] at (-0.5, 1.0) {1 }}
        child {node[lc] at (0.5, 1.0) { 2}}
        ;
        \node[b] at (2.0, 0.0) { }
        child {node[lc] at (-0.5, 1.0) {2 }}
        child {node[lc] at (0.5, 1.0) { 1}};
    }}

\newcommand{\forestAG}{
    \tikz[planar forest ] {

        \node[b] at (0.0, 0.0) { }
        child {node[lc] at (-0.5, 1.0) {1 }}
        child {node[lc] at (0.5, 1.0) { 1}}
        ;
        \node[b] at (2.0, 0.0) { }
        child {node[lc] at (-0.5, 1.0) {2 }}
        child {node[lc] at (0.5, 1.0) { 2}};
    }}

\newcommand{\forestAH}{
    \tikz[planar forest ] {

        \node[b] at (0.0, 0.0) { }
        child {node[lc] at (0.0, 1.0) {1 }}
        ;
        \node[b] at (2.0, 0.0) { }
        child {node[lc] at (-1.0, 1.0) {1 }}
        child {node[lc] at (0.0, 1.0) { 2}}
                child {node[lc] at (1.0, 1.0) { 2}};
    }}

\newcommand{\forestAI}{
    \tikz[planar forest ] {

        \node[b] at (0.0, 0.0) { }
        child {node[lc] at (-0.5, 1.0) {1 }}
        child {node[lc] at (0.5, 1.0) { 1}}
        ;
        \node[b] at (1.5, 0.0) { }
        child {node[b] at (0., 1.0) { }};
    }}

\newcommand{\forestAJ}{
    \tikz[planar forest ] {

        \node[b] at (0.0, 0.0) { }
        child {node[lc] at (0.0, 1.0) {1 }}
        ;
        \node[b] at (1.5, 0.0) { }
                child {node[lc] at (-0.5, 1.0) { 1}}
        child {node[b] at (0.5, 1.0) { }};
    }}

\newcommand{\forestAK}{
    \tikz[planar forest ] {

        \node[b] at (0.0, 0.0) { }
        child {node[lc] at (-0.5, 1.0) {1 }}
        child {node[b] at (0.5, 1.0) { }}
        ;
        \node[b] at (1.5, 0.0) { }
        child {node[lc] at (0., 1.0) {1 }};
    }}

\newcommand{\forestAL}{
    \tikz[planar forest ] {

        \node[b] at (0.0, 0.0) { }
        child {node[b] at (0.0, 1.0) { }}
        ;
        \node[b] at (1.5, 0.0) { }
                child {node[lc] at (-0.5, 1.0) { 1}}
        child {node[lc] at (0.5, 1.0) { 1}};
    }}

\newcommand{\forestAM}{
    \tikz[planar forest ] {

        \node[b] at (0.0, 0.0) { }
        child {node[b] at (0.0, 1.0) { }}
        ;
        \node[b] at (1.0, 0.0) { }
                child {node[b] at (0., 1.0) { }}
    }}

%% file: first_publication_compiled.bbl
\begin{thebibliography}{10}

\bibitem{Abdulle19act}
A.~Abdulle, G.~A. Pavliotis, and G.~Vilmart.
\newblock Accelerated convergence to equilibrium and reduced asymptotic
  variance for {L}angevin dynamics using {S}tratonovich perturbations.
\newblock {\em C. R. Math. Acad. Sci. Paris}, 357(4):349--354, 2019.

\bibitem{Abdulle14hon}
A.~Abdulle, G.~Vilmart, and K.~C. Zygalakis.
\newblock High order numerical approximation of the invariant measure of
  ergodic {SDE}s.
\newblock {\em SIAM J. Numer. Anal.}, 52(4):1600--1622, 2014.

\bibitem{Abdulle15lta}
A.~Abdulle, G.~Vilmart, and K.~C. Zygalakis.
\newblock Long time accuracy of {L}ie-{T}rotter splitting methods for
  {L}angevin dynamics.
\newblock {\em SIAM J. Numer. Anal.}, 53(1):1--16, 2015.

\bibitem{AlKaabi22aao}
M.~J.~H. Al-Kaabi, K.~Ebrahimi-Fard, D.~Manchon, and H.~Z. Munthe-Kaas.
\newblock Algebraic aspects of connections: From torsion, curvature, and
  post-{L}ie algebras to {G}avrilov's double exponential and special
  polynomials.
\newblock {\em Journal of Noncommutative Geometry}, 19(1):297--335, 2023.

\bibitem{Bronsard22aod}
Y.~Alama~Bronsard, Y.~Bruned, and K.~Schratz.
\newblock Approximations of dispersive {PDE}s in the presence of low-regularity
  randomness.
\newblock {\em Found. Comput. Math.}, pages 1--51, 2024.

\bibitem{Antonyuk07nonexp}
A.~Antonyuk and A.~Antonyuk.
\newblock Nonexplosion and solvability of nonlinear diffusion equations on
  noncompact manifolds.
\newblock {\em Ukr. Math. J.}, 59:1632--1652, 2007.

\bibitem{Bakry86cne}
D.~Bakry.
\newblock Un critère de non-explosion pour certaines diffusions sur une
  variété riemannienne complète.
\newblock {\em C.R. Acad. Sc. Paris}, 303(1):23--26, 1986.

\bibitem{Baxter60aap}
G.~Baxter et~al.
\newblock An analytic problem whose solution follows from a simple algebraic
  identity.
\newblock {\em Pacific J. Math}, 10(3):731--742, 1960.

\bibitem{Bharath23sae}
K.~Bharath, A.~Lewis, A.~Sharma, and M.~V. Tretyakov.
\newblock Sampling and {E}stimation on {M}anifolds using the {L}angevin
  {D}iffusion.
\newblock {\em Journal of Machine Learning Research}, 26(71):1--50, 2025.

\bibitem{Bonicelli25ebs}
A.~Bonicelli.
\newblock Exotic {B}-series representation of the {F}eller semigroup for {I}tô
  diffusions and the {MSR} path integral.
\newblock {\em arXiv preprint arXiv:2510.23102}, 2025.

\bibitem{BouRabee10lra}
N.~Bou-Rabee and H.~Owhadi.
\newblock Long-run accuracy of variational integrators in the stochastic
  context.
\newblock {\em SIAM J. Numer. Anal.}, 48(1):278--297, 2010.

\bibitem{Bronasco22ebs}
E.~Bronasco.
\newblock Exotic {B}-series and {S}-series: algebraic structures and order
  conditions for invariant measure sampling.
\newblock {\em Found. Comput. Math.}, pages 1--31, 2024.

\bibitem{Bronasco22cef}
E.~Bronasco and A.~Busnot~Laurent.
\newblock Hopf algebra structures for the backward error analysis of ergodic
  stochastic differential equations.
\newblock {\em Numer. Math.}, pages 1--61, 2026.

\bibitem{Bronasco25hoi}
E.~Bronasco, A.~Busnot~Laurent, and B.~Huguet.
\newblock High order integration of stochastic dynamics on {R}iemannian
  manifolds with frozen-flow methods.
\newblock {\em arXiv:2503.21855}, 2025.

\bibitem{Bronasco25els}
E.~Bronasco, B.~Leimkuhler, D.~Phillips, and G.~Vilmart.
\newblock Efficient {L}angevin sampling with position-dependent diffusion.
\newblock {\em arXiv:2501.02943}, 2025.

\bibitem{Brouder00rkm}
C.~Brouder.
\newblock Runge--{K}utta methods and renormalization.
\newblock {\em Eur. Phys. J. C}, 12(3):521--534, 2000.

\bibitem{Busnot26osr}
A.~Busnot~Laurent, K.~Debrabant, and A.~Kv{\ae}rn\o.
\newblock Optimal stochastic {R}unge-{K}utta methods for the weak integration
  of stochastic dynamics.
\newblock {\em arXiv:2603.24255}, 2026.

\bibitem{Busnot25pha}
A.~Busnot~Laurent, Y.~Li, and Y.~Sheng.
\newblock Post-{H}opf algebroids, post-{L}ie-{R}inehart algebras and geometric
  numerical integration.
\newblock {\em arXiv:2512.21971}, 2025.

\bibitem{Busnot26tft}
A.~Busnot~Laurent, H.~Munthe-Kaas, and G.~S. Venkatesh.
\newblock The free tracial post-{L}ie-{R}inehart algebra of planar aromatic
  trees for the design of divergence-free {L}ie-group methods.
\newblock {\em arXiv:2603.28437}, 2026.

\bibitem{Butcher72aat}
J.~C. Butcher.
\newblock An algebraic theory of integration methods.
\newblock {\em Math. Comp.}, 26:79--106, 1972.

\bibitem{Butcher21bsa}
J.~C. Butcher.
\newblock {\em B-series: algebraic analysis of numerical methods}.
\newblock Springer, 2021.

\bibitem{Celledoni03cfl}
E.~Celledoni, A.~Marthinsen, and B.~Owren.
\newblock Commutator-free {L}ie group methods.
\newblock {\em Future Generation Computer Systems}, 19(3):341--352, 2003.

\bibitem{Crouch93nio}
P.~E. Crouch and R.~Grossman.
\newblock Numerical integration of ordinary differential equations on
  manifolds.
\newblock {\em Journal of Nonlinear Science}, 3:1--33, 1993.

\bibitem{Debussche12wbe}
A.~Debussche and E.~Faou.
\newblock Weak backward error analysis for {SDE}s.
\newblock {\em SIAM J. Numer. Anal.}, 50(3):1735--1752, 2012.

\bibitem{Deng23fdo}
Y.~Deng and Z.~Hani.
\newblock Full derivation of the wave kinetic equation.
\newblock {\em Inventiones mathematicae}, 233(2):543--724, 2023.

\bibitem{Duncan16vru}
A.~B. Duncan, T.~Leli\`evre, and G.~A. Pavliotis.
\newblock Variance reduction using nonreversible {L}angevin samplers.
\newblock {\em J. Stat. Phys.}, 163(3):457--491, 2016.

\bibitem{Ebrahimi15otl}
K.~Ebrahimi-Fard, A.~Lundervold, and H.~Z. Munthe-Kaas.
\newblock On the {L}ie enveloping algebra of a post-{L}ie algebra.
\newblock {\em J. Lie Theory}, 25(4):1139--1165, 2015.

\bibitem{Ebrahimi14tme}
K.~Ebrahimi-Fard and D.~Manchon.
\newblock The {M}agnus expansion, trees and {K}nuth’s rotation
  correspondence.
\newblock {\em Found. Comput. Math.}, 14(1):1--25, 2014.

\bibitem{Floystad20tup}
G.~Fl{\o}ystad, D.~Manchon, and H.~Z. Munthe-Kaas.
\newblock The universal pre-{L}ie-{R}inehart algebras of aromatic trees.
\newblock In {\em Geometric and harmonic analysis on homogeneous spaces and
  applications}, volume 366 of {\em Springer Proc. Math. Stat.}, pages
  137--159. Springer, Cham, [2021] \copyright 2021.

\bibitem{Grong23pla}
E.~Grong, H.~Z. Munthe-Kaas, and J.~Stava.
\newblock Post-{L}ie algebra structure of manifolds with constant curvature and
  torsion.
\newblock {\em Journal of Lie Theory}, 34(2):339--352, 2024.

\bibitem{Hairer06gni}
E.~Hairer, C.~Lubich, and G.~Wanner.
\newblock {\em Geometric numerical integration}, volume~31 of {\em Springer
  Series in Computational Mathematics}.
\newblock Springer-Verlag, Berlin, second edition, 2006.
\newblock Structure-preserving algorithms for ordinary differential equations.

\bibitem{Hairer15gvn}
M.~Hairer and D.~Kelly.
\newblock Geometric versus non-geometric rough paths.
\newblock {\em Ann. Inst. Henri Poincar\'{e} Probab. Stat.}, 51(1):207--251,
  2015.

\bibitem{Hsu02sao}
E.~P. Hsu.
\newblock {\em Stochastic analysis on manifolds}, volume~38 of {\em Graduate
  Studies in Mathematics}.
\newblock American Mathematical Society, Providence, RI, 2002.

\bibitem{Iserles00lgm}
A.~Iserles, H.~Z. Munthe-Kaas, S.~P. N{\o}rsett, and A.~Zanna.
\newblock Lie-group methods.
\newblock In {\em Acta numerica, 2000}, volume~9 of {\em Acta Numer.}, pages
  215--365. Cambridge Univ. Press, Cambridge, 2000.

\bibitem{Laurent21ata}
A.~Laurent.
\newblock {\em Algebraic Tools and Multiscale Methods for the Numerical
  Integration of Stochastic Evolutionary Problems}.
\newblock PhD thesis, University of Geneva, 2021.

\bibitem{Laurent23tld}
A.~Laurent.
\newblock The {L}ie derivative and {N}oether's theorem on the aromatic
  bicomplex for the study of volume-preserving numerical integrators.
\newblock {\em J. Comput. Dyn.}, 11(1):10--22, 2024.

\bibitem{Laurent23tab}
A.~Laurent, R.~I. McLachlan, H.~Z. Munthe-Kaas, and O.~Verdier.
\newblock The aromatic bicomplex for the description of divergence-free
  aromatic forms and volume-preserving integrators.
\newblock {\em Forum Math. Sigma}, 11:Paper No. e69, 2023.

\bibitem{Laurent23tue}
A.~Laurent and H.~Munthe-Kaas.
\newblock The universal equivariance properties of exotic aromatic {B}-series.
\newblock {\em Found. Comput. Math.}, 25(5):1595--1626, 2025.

\bibitem{Laurent20eab}
A.~Laurent and G.~Vilmart.
\newblock Exotic aromatic {B}-series for the study of long time integrators for
  a class of ergodic {SDE}s.
\newblock {\em Math. Comp.}, 89(321):169--202, 2020.

\bibitem{Laurent21ocf}
A.~Laurent and G.~Vilmart.
\newblock Order conditions for sampling the invariant measure of ergodic
  stochastic differential equations on manifolds.
\newblock {\em Found. Comput. Math.}, 22(3):649--695, 2022.

\bibitem{Lee19irm}
J.~Lee.
\newblock {\em Introduction to Riemannian Manifolds}.
\newblock Graduate Texts in Mathematics. Springer International Publishing,
  2019.

\bibitem{Leimkuhler13rco}
B.~Leimkuhler and C.~Matthews.
\newblock Rational construction of stochastic numerical methods for molecular
  sampling.
\newblock {\em Appl. Math. Res. Express. AMRX}, 2013(1):34--56, 2013.

\bibitem{Leimkuhler16tco}
B.~Leimkuhler, C.~Matthews, and G.~Stoltz.
\newblock The computation of averages from equilibrium and nonequilibrium
  {L}angevin molecular dynamics.
\newblock {\em IMA J. Numer. Anal.}, 36(1):13--79, 2016.

\bibitem{Lejay22cgr}
A.~Lejay.
\newblock Constructing general rough differential equations through flow
  approximations.
\newblock {\em Electron. J. Probab.}, 27:Paper No. 7, 24, 2022.

\bibitem{Lelievre13onr}
T.~Leli\`evre, F.~Nier, and G.~A. Pavliotis.
\newblock Optimal non-reversible linear drift for the convergence to
  equilibrium of a diffusion.
\newblock {\em J. Stat. Phys.}, 152(2):237--274, 2013.

\bibitem{Lelievre10fec}
T.~Leli\`evre, M.~Rousset, and G.~Stoltz.
\newblock {\em Free energy computations}.
\newblock Imperial College Press, London, 2010.
\newblock A mathematical perspective.

\bibitem{Li94stochdiff}
X.-M. Li.
\newblock Stochastic differential equations on noncompact manifolds: moment
  stability and its topological consequences.
\newblock {\em Probab. Theory Relat. Fields}, 100:417--428, 1994.

\bibitem{Li23pha}
Y.~Li, Y.~Sheng, and R.~Tang.
\newblock Post-{H}opf algebras, relative {R}ota--{B}axter operators and
  solutions to the {Y}ang--{B}axter equation.
\newblock {\em Journal of Noncommutative Geometry}, 18(2):605--630, 2023.

\bibitem{Luesink26stf}
E.~Luesink and O.~D. Street.
\newblock Symplectic techniques for stochastic differential equations on
  reductive {L}ie groups with applications to {L}angevin diffusions.
\newblock {\em Journal of Differential Equations}, 458:114034, 2026.

\bibitem{Lundervold11hao}
A.~Lundervold and H.~Munthe-Kaas.
\newblock Hopf algebras of formal diffeomorphisms and numerical integration on
  manifolds.
\newblock In {\em Combinatorics and physics}, volume 539 of {\em Contemp.
  Math.}, pages 295--324. Amer. Math. Soc., Providence, RI, 2011.

\bibitem{Malham08slg}
S.~J.~A. Malham and A.~Wiese.
\newblock Stochastic {L}ie group integrators.
\newblock {\em SIAM J. Sci. Comput.}, 30(2):597--617, 2008.

\bibitem{McLachlan16bsm}
R.~I. McLachlan, K.~Modin, H.~Munthe-Kaas, and O.~Verdier.
\newblock B-series methods are exactly the affine equivariant methods.
\newblock {\em Numer. Math.}, 133(3):599--622, 2016.

\bibitem{Muniz22hso}
M.~Muniz, M.~Ehrhardt, M.~G{\"u}nther, and R.~Winkler.
\newblock Higher strong order methods for linear {I}t{\^o} {SDE}s on matrix
  {L}ie groups.
\newblock {\em BIT Numer. Math.}, 62(4):1095--1119, 2022.

\bibitem{MuntheKaas24gio}
H.~Munthe-Kaas.
\newblock Geometric integration on symmetric spaces.
\newblock {\em J. Comput. Dyn.}, 11(1):43--58, 2024.

\bibitem{MuntheKaas23lat}
H.~Munthe-Kaas and J.~Stava.
\newblock Lie admissible triple algebras: The connection algebra of symmetric
  spaces.
\newblock {\em Submitted}, 2023.

\bibitem{MuntheKaas16abs}
H.~Munthe-Kaas and O.~Verdier.
\newblock Aromatic {B}utcher series.
\newblock {\em Found. Comput. Math.}, 16(1):183--215, 2016.

\bibitem{MuntheKaas13opl}
H.~Z. Munthe-Kaas and A.~Lundervold.
\newblock On post-{L}ie algebras, {L}ie--{B}utcher series and moving frames.
\newblock {\em Found. Comput. Math.}, 13:583--613, 2013.

\bibitem{MuntheKaas08oth}
H.~Z. Munthe-Kaas and W.~M. Wright.
\newblock On the {H}opf algebraic structure of {L}ie group integrators.
\newblock {\em Found. Comput. Math.}, 8(2):227--257, 2008.

\bibitem{ONeill83srg}
B.~O'Neill.
\newblock {\em Semi-Riemannian Geometry With Applications to Relativity}.
\newblock Pure and Applied Mathematics. Academic Press, 1983.

\bibitem{Oudom08otl}
J.-M. Oudom and D.~Guin.
\newblock On the {L}ie enveloping algebra of a pre-{L}ie algebra.
\newblock {\em J. K-Theory}, 2(1):147--167, 2008.

\bibitem{Owren06ocf}
B.~Owren.
\newblock Order conditions for commutator-free {L}ie group methods.
\newblock {\em Journal of Physics A: Mathematical and General}, 39(19):5585,
  2006.

\bibitem{Owren99rkm}
B.~Owren and A.~Marthinsen.
\newblock Runge-{K}utta methods adapted to manifolds and based on rigid frames.
\newblock {\em BIT Numer. Math.}, 39(1):116--142, 1999.

\bibitem{Stava24oca}
J.~Stava.
\newblock {\em On connection algebras of symmetric spaces and reductive
  homogeneous spaces}.
\newblock PhD thesis, University of Bergen, 2024.

\bibitem{Talay90eot}
D.~Talay and L.~Tubaro.
\newblock Expansion of the global error for numerical schemes solving
  stochastic differential equations.
\newblock {\em Stochastic Anal. Appl.}, 8(4):483--509 (1991), 1990.

\bibitem{Vilmart15pif}
G.~Vilmart.
\newblock Postprocessed integrators for the high order integration of ergodic
  {SDE}s.
\newblock {\em SIAM J. Sci. Comput.}, 37(1):A201--A220, 2015.

\end{thebibliography}
